\newtheorem{thm}{Theorem}[section]
\newtheorem{lem}[thm]{Lemma}
\newtheorem{cor}[thm]{Corollary}
\newtheorem{prop}[thm]{Proposition}
\newtheorem{assumption}[thm]{Assumption}
\theoremstyle{definition}
\newtheorem{defn}[thm]{Definition}
\theoremstyle{remark}
\newtheorem{rmk}[thm]{Remark}
\numberwithin{equation}{section}
\def\grad{\nabla}
\newcommand{\W}[1][1,p]{W^{#1}(\Omega)}
\renewcommand{\L}[1][p]{L^{#1}(\Omega)}
\renewcommand{\H}[1][1]{W^{#1,2}(\Omega)}
\newcommand{\Lb}[1][2]{L^{#1}(\Gamma)}
\newcommand{\Wo}[1][1,p]{W_0^{#1}(\Omega)}
\newcommand{\into}{\hookrightarrow}
\newcommand{\tinto}{\mathrel{\overset{\gamma}{\to}}}
\newcommand{\R}{\mathbb{R}}
\newcommand{\N}{\mathbb{N}}
\newcommand{\E}{\mathscr{E}}
\newcommand{\D}{\mathscr{D}}
\newcommand{\A}{\mathscr{A}}
\let\div\undefined
\DeclareMathOperator{\div}{div}
\DeclareMathOperator{\spn}{span}
\newcommand{\ptag}[1]{ \tag{\ref{#1}$'$} } 
\DeclareMathOperator*{\esssup}{ess\,sup}
\def\t{\gamma} %
\def\g{\gamma}
\def\G{\Gamma}
\def\O{\Omega}
\def\grad{\nabla}
\newcommand{\plap}{\Delta_p}
\author[N. J. Kass]{Nicholas J. Kass}
\address{Department of Mathematics, University of Nebraska--Lincoln, Lincoln, NE  68588-0130, USA} \email{nkass@huskers.unl.edu}
\author[M. A. Rammaha]{Mohammad A. Rammaha}
\address{Department of Mathematics, University of Nebraska--Lincoln, Lincoln, NE  68588-0130, USA} \email{mrammaha1@unl.edu}
\thanks{This research was partially supported by NSF grant DMS-1211232.}
\title[Wave equations of the $p$-Laplacian type]{On Wave equations of the $p$-Laplacian type with supercritical nonlinearities}
\date{\today}
\subjclass[2010]{Primary: 35L05, 35L20, 35L72 Secondary: 58J45}
\keywords{wave equation, $p$-Laplacian, supercritical sources, local existence, generalized Robin condition}
\begin{document}

\begin{abstract} This article focuses  on  a quasilinear wave equation of $p$-Laplacian type:
	\[
	u_{tt} - \Delta_p u -\Delta u_t = f(u)
	\]
	in a bounded domain $\O \subset \R^3$ with a sufficiently smooth boundary $\G=\partial \O$ subject to a generalized Robin boundary
	condition featuring boundary damping and a nonlinear source term. The operator $\plap$, $2<p<3$, denotes the classical $p$-Laplacian. The interior and boundary terms $f(u)$, $h(u)$  are sources  that  are allowed to  have a \emph{supercritical} exponent, in the sense that their associated Nemytskii operators are not locally Lipschitz from $\W$ into $L^2(\O)$ or $L^2(\G)$.  
Under suitable assumptions on the parameters we provide a rigorous  proof of existence of a local weak solution which can be extended globally in time, provided the damping terms dominates the corresponding sources in an appropriate sense. Moreover, a blow-up result is proved for solutions with negative initial total energy.
\end{abstract}

\maketitle

\section{Introduction}\label{S1}

\subsection{The model} 
This paper is concerned with the  existence of local and global solutions to the quasilinear initial-boundary value problem:
\begin{align}
\label{wave}
\begin{cases}
u_{tt}-\Delta_p u -\Delta u_t = f(u) &\text{ in } \Omega \times (0,T),\\[.1in]
(u(0),u_t(0))=(u_0,u_1) \\[.1in]
|\grad u|^{p-2}\partial_\nu u + |u|^{p-2}u + \partial_\nu u_t + u_t = h(u)&\text{ on }\Gamma \times(0,T),
\end{cases}
\end{align}
for given initial data $(u_0,u_1) \in \W\times\L[2]$ and $2<p<3$.  
The operator $\Delta_p$ is the classical $p$-Laplacian given by: 
\begin{align*}
\Delta_p u=\div(|\grad u|^{p-2}\grad u).
\end{align*}
For the sake of physical relevance we shall assume that $\O\subset\R^3$ is a bounded open domain with boundary $\G$ of class $C^2$ having outward normal vector $\nu$, however analogous results are possible in other spatial dimensions provided the appropriate changes are made to the exponents coming from the various Sobolev embedding and trace theorems.  

Throughout the paper, we study (\ref{wave}) under the following  assumption:
\begin{assumption}\label{ass:fg}
We assume that the interior and boundary source feedback terms $f,\, h\in C^1(\R)$ are $\R$-valued functions such that 
\begin{align*}
		|f'(u)|&\leq C(|u|^{q-1}+1)\text{   where   } 1\leq q < \frac{5p}{2(3-p)}, \\
		|h'(u)|&\leq C(|u|^{r-1}+1)\text{   where   } 1\leq r < \frac{3p}{2(3-p)},
\end{align*}
where $2<p<3$.
\end{assumption}

\begin{rmk}\label{rmk:fbound}
These restrictions on the exponents $q$ and $r$ are inherited from the problem itself and the  Sobolev embedding and trace theorems.  In addition, 
	as the bounds will be used often throughout the paper it is worthy of note that the assumptions on $f$ and $h$ imply that
\begin{alignat*}{4}
	|f(u)|&\leq C(|u|^q+1),\quad&|f(u)-f(v)|&\leq C(|u|^{q-1}+|v|^{q-1}+1)|u-v|,\\
	|h(u)|&\leq C(|u|^r +1),\quad&|h(u)-h(v)|&\leq C(|u|^{r-1}+|v|^{r-1}+1)|u-v| .		
	\end{alignat*}	
\end{rmk}

\subsection{Literature overview and new contributions}
Strongly damped wave equations of the form
\begin{align}\label{lit-wave}
u_{tt} -\Delta u - \Delta u_t = f
\end{align}
have been given significant attention in the literature, in part due to their natural physical interpretations as modeling vibrations in viscoelastic materials.  In fact, the term $-\Delta u_t$ in \eqref{lit-wave} is commonly referred to as Voigt damping in reference to its role in describing so-called Kelvin-Voigt materials, exemplified in one dimension as a viscous damper in parallel with an elastic spring.  The source feedback term, $f(u)$, is permitted to have ``bad" sign, in that its presence may serve to increase the total energy of the system in time. In general, it is the relative strength of this source term as compared to the damping which will determine the long-term behavior of the equation, and thus the interaction between the two which is of particular interest.

Beginning in 1980, a seminal paper by Webb, \cite{W}, establishes unique global solutions exhibiting exponential decay of energy to an equation of the form \eqref{lit-wave}  with zero Dirichlet boundary condition for an essentially linear function $f$ using the theory of semigroups. A significant generalization is given in 1991 by Ghidaglia and Marzocchi in \cite{MR1112054}, where the Laplacian is replaced by a positive, linear operator $A$ and the requirements on the source feedback term $f$ are greatly relaxed, permitting sources of order five in the form $f(u)=C(1+u^5)$ in dimension three.  With these less restrictive conditions on the source feedback term 
solutions need no longer be global.

Generalizing this work to equations in which the principal part of the PDE contains a nonlinearity such as the $p$-Laplacian introduces additional challenges.  In \cite{MR1634008}, Chen, Guo, and Wang investigate the end behavior of solutions of 
\begin{align}\label{lit-chenguo}
u_{tt} -\sigma(u_x)_x - u_{xxt} =f(u)+ g(x)
\end{align}
with zero Dirichlet boundary where $\sigma$ is a smooth function satisfying $\sigma(0)=0$  along with the bound $\sigma'(s)\geq r_0>0$ for all $s\in\R$.  By taking $\sigma(s)=s$ in \eqref{lit-chenguo} the problem under consideration is a damped wave equation in dimension one whose principle part matches \eqref{lit-wave}.  While this formulation does permit some nonlinearity it does not include the case of the $p$-Laplacian as the function $\sigma(s)=|s|^{p-2}s$ and its derivative do not enjoy the necessary smoothness or boundedness.

Biazutti's work in \cite{Bia:95:NA} involves a Cauchy problem of the form 
\begin{align}
u_{tt}(t)+Au(t) + Gu_t(t)  + B(t)u_t(t)= f(t)
\end{align}
for a linear operator $B(t)$ and nonlinear operators $A$ and $G$.  

The work of Rammaha and Wilstein in \cite{RW} and Pei et. al. in \cite{PRT-p-Laplacain} explicitly include the $p$-Laplacian by considering, respectively, equations of the form 
$$u_{tt}-\Delta u - \Delta_p u_t =f(u)\quad\text{and}\quad u_{tt}-\Delta_p u - \Delta u_t =f(u)$$
with zero Dirichlet boundary in dimension three with $2<p<3$.  In both works the assumptions on the source $f$ are quite mild, and $f$ is permitted to have so-called supercritical order in that it is no longer locally Lipschitz continuous when viewed as a map from the solution space, $\Wo[1,p]$, into $\L[2]$.  

Works which include boundary conditions are not as well represented in the literature. A closely related problem is studied  by Vitillaro \cite{V3,V2}:
\begin{align*}
\begin{cases}
u_{tt}-\Delta u=0&\text{ in }\O\times(0,T),\\
u=0&\text{ on }\G_0\times [0,T),\\ 
\partial_\nu u + |u_t|^{m-2}u_t = |u|^{p-2}u  &\text{ on }\G_1\times (0,T) 
\end{cases}
\end{align*}
where $\O$ is a domain in $\R^n$ with smooth boundary given as the disjoint union $\partial\O = \G_1\cup \G_2$. 

In addition to the references given in detail above, the literature is rich with results on wave equations and systems of wave equations. Many pioneering papers such as Lions and Strauss \cite{LSt}, as well as works by Glassey \cite{G} and Levine \cite{l1} are worthy of mention.  More recently, Georgiev and Todorova \cite{GT} ignited significant interest in the interaction of source and damping terms in wave equations. The blow-up result in \cite{BLR3} further characterizes this  interaction in the case where the damping is nonlinear.  For systems of wave equations in bounded domains the papers \cite{AR2,GR1,GR2,GR} are additionally worthy of mention.  

In this manuscript we employ a Galerkin type scheme to demonstrate the existence of suitably defined weak solutions to \eqref{wave}, and then prove sufficient conditions for global stability as well as a blow-up result in finite time.

 Several technical challenges are present, chiefly involving the identification of the limiting value of $\Delta_p u_N$ with the value of $\Delta_p u$ at the Galerkin level which we carefully accomplish through the use of monotone operator theory. Our detailed approach also highlights the crucial difficulty that would arise if the Kelvin-Voigt damping were replaced with an $m$-Laplacian term $\Delta_m u_t$, $m>2$. In that case, the simultaneous identification of two weak limits, one for the $p$-Laplacian of $u$ and the other for the $m$-Laplacian of $u_t$ (even if $m=p$) cannot be carried out by the same approach.  It had been assumed in some previous works that the Galerkin approach might trivially extend to the $m$-$p$ model, for instance in \cite{BM2} which attempts to rely on \cite{Bia:95:NA}  and \cite{nak-nan:75} that deal with a \emph{single} $p$-Laplace operator in the equation.
 That is not the case, however, and rigorous analysis of well-posedness for $p$-Laplacian/$m$-Laplacian (with $m,\, p >2$) second-order equation is presently missing from the literature, remaining a challenging open problem.

\subsection{Notation}
Throughout the paper the following notational conventions for $L^p$ space norms and inner products will be used, respectively:
\begin{align*}
&||u||_s=||u||_{\L[s]}, &&|u|_s=||u||_{\Lb[s]};\\
&(u,v)_\Omega = (u,v)_{\L[2]},&&(u,v)_\Gamma = (u,v)_{\Lb[2]}.
\end{align*}
We also use the notation  $\g u$ to denote the \emph{trace} of $u$ on $\G$ and we write $\frac{d}{dt}(\g u(t))$ as $\g u_t$ or $\g u'$.
\\ As is customary, $C$ shall always denote a positive constant which may change from line to line.  Following from the Poincar\'{e}-Wirtinger type inequality
\[
||u||_p^p \leq C(||\grad u||_p^p +|\g u|_p^p)\text{ for all }u\in \W
\]
we may choose as a matter of convenience
\[
||u||_{1,p}=\left(||\grad u||_p^p +|\g u|_p^p\right)^{1/p}
\]
as a norm on $\W$ equivalent to the standard norm.\\
For a Banach space $X$, we denote the duality pairing between the dual space $X'$ and $X$ by $\langle \cdot,\cdot \rangle_{X',X}$. That is, 
\begin{align*}
\langle \psi,x \rangle_{X',X} =\psi(x)\text{ for }x\in X,\, \psi\in X'.
\end{align*}
In particular, the duality pairing between $(\W)'$ and $\W$ shall be denoted $\langle \cdot,\cdot\rangle_p$.

By imposing the Robin-type boundary condition $|\grad u|^{p-2}\partial_\nu u + |u|^{p-2}u=0$ on $\Gamma$ the $p$-Laplacian given at the onset of the paper extends readily to a maximal monotone operator from $\W$ into its dual, $(\W)'$, with action given by: 
\begin{align}\label{plapalce}
\langle -\Delta_p u, \phi\rangle_p = \int_\Omega |\grad u|^{p-2}\grad u\cdot\grad\phi\,dx + \int_\Gamma |\g u|^{p-2}\g u \g \phi\,dS,\quad u,\phi\in\W.
\end{align}
Further, it is convenient to record the bound
\begin{align}\label{plaplace-opnorm}
	||-\Delta_p u||_{(\W)'}\leq 2||u||_{1,p}^{p-1}, \quad u\in\W,
\end{align}
on the operator norm of $-\Delta_p u$ which follows easily from H\"older's inequality.  As the Laplacian occurs as a term in equation \eqref{wave}  providing damping, it is efficient to utilize all of the preceding notation formally including the case of $p=2$.  Throughout the paper however, we shall always assume $2<p<3$.   Additionally, the Sobolev embedding (in 3D)
\begin{align*}
\W \into \Lb[\frac{2p}{3-p}]
\end{align*}
as well as the inequalities associated with the trace operator $\gamma$ in the map 
\begin{align*}
\W[1-\epsilon,p]\tinto \Lb[\frac{2p}{3-(1-\epsilon)p}]\into\Lb[4]
\end{align*}
for sufficiently small $\epsilon\geq 0$, will be used frequently  (See, e.g., \cite{ADAMS}).
 As it occurs so frequently we shall pass to subsequences consistently without re-indexing.

\subsection{Main results}
We begin by giving the definition of a weak solution  of \eqref{wave}.
\begin{defn}\label{def:weaksln}  A function $u$ is said to be a weak solution of \eqref{wave} on the interval $[0,T]$ provided:
	\begin{enumerate}[(i)]
		\setlength{\itemsep}{5pt}
		\item\label{def-a} $u\in C_w([0,T];\W)$,
		\item\label{def-b} $u_t\in L^2(0,T;\W[1,2])\cap C_w([0,T];\L[2]),$
		\item\label{def-c} $(u(0),u_t(0))=(u_0,u_1)$ in $\W \times \L[2]$,
		\item\label{def-d} and for all $t\in[0,T]$ the function $u$ verifies the identity
			\begin{align}\label{slnid}
	(u_t(t),\phi(t))_\Omega &- (u_1,\phi(0))_\Omega -\int_0^t(u_t(\tau),\phi_t(\tau))_\Omega\,d\tau \notag \\
		&+ \int_0^t \langle -\Delta_p u(\tau), \phi(\tau)\rangle_p\,d\tau  +\int_0^t \langle -\Delta u_t(\tau), \phi(\tau)\rangle_2 \,d\tau \notag \\ 
		&=\int_0^t\int_\Omega f(u(\tau))\phi(\tau)\,dxd\tau 
		+ \int_0^t \int_\Gamma h(\t u(\tau))\t\phi(\tau)\,dSd\tau 
		\end{align} 	
	\end{enumerate}
for all test functions $\phi\in C_w([0,T];\W)$ with $\phi_t\in L^2(0,T;\W[1,2])$.		
\end{defn}
\begin{rmk}
	In Definition~\ref{def:weaksln} above, $C_w([0,T]; X)$ denotes the space of weakly continuous (often called scalarly continuous) functions from $[0,T]$ into a Banach space $X$.  That is, for each $u\in C_w([0,T];X)$ and  $f \in X'$ the map $t\mapsto \langle f, u(t) \rangle_{X',X}$ is continuous on $[0,T]$.
\end{rmk}
The main results of this work are the following three theorems, the first of which establishes the existence of weak solutions satisfying a suitable energy inequality.
\smallskip
\begin{thm}[\textbf{Local Solutions}]\label{thm:exist}
	Under the stated assumptions, problem \eqref{wave} possesses a local weak solution, $u$, in the sense of Definition~\ref{def:weaksln} on a non-degenerate interval $[0,T]$ with length dependent only upon the initial data, $(u_0,u_1)$, and the local Lipschitz constants of the maps $f:\W\to\L[6/5]$ and $h\circ\t :\W\to\Lb[4/3]$ on a ball about zero of radius prescribed by the initial positive energy. Further, this solution $u$ satisfies the energy inequality 
	\begin{align}\label{energy-2ndid}
	\E(t) + \int_0^t ||u'(\tau)||_{1,2}^2 \,d\tau 
	\leq
	 \E(0) 
	&+ \int_0^t \int_\O f(u(\tau))u'(\tau)\,dx d\tau  \notag \\ 
	&+ \int_0^t \int_\G h(\t u(\tau))\t u'(\tau)\,dSd\tau 
	\end{align}
	where $\E(t)=\frac{1}{2}||u'(t)||_2^2 + \frac{1}{p}||u(t)||_{1,p}^p$ is the positive energy. Equivalently, \eqref{energy-2ndid} can also be written as 
	\begin{align}\label{energy-1stid}
	E(t) + \int_0^t ||u'(\tau)||_{1,2}^2\,d\tau \leq E(0)
	\end{align}
	with $E(t)=\E(t)-\int_\O F(u(t))\,dx - \int_\G H(\t u(t))\,dS$ by taking $F$ and $H$ as the primitives of $f$ and $h$, respectively. i.e., $F(u)=\int_0^u f(s)\,ds$ and $H(\t u)=\int_0^{\t u} h(s)\,ds$. 
\end{thm}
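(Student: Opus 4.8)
The plan is to construct a weak solution by a Faedo--Galerkin scheme in which the strong (Kelvin--Voigt) damping $-\mlap u_t$ compensates for the supercriticality of the two sources at the level of the energy estimate, and then to pass to the limit in the quasilinear term $-\plap u_N$ by monotone operator theory. First I would fix a Galerkin basis $\{w_j\}_{j\ge1}$ of $\W$ — note $\W\into\H$ since $p>2$, so the $w_j$ also lie in $\H$ — chosen so that the $L^2$-orthogonal projections $P_N$ onto $V_N:=\spn\{w_1,\dots,w_N\}$ are uniformly bounded on $\W$. With $u_N(0)\to u_0$ in $\W$ and $u_N'(0)\to u_1$ in $\L[2]$, the finite system obtained by testing \eqref{wave} against $w_1,\dots,w_N$ has a locally Lipschitz right-hand side (by Assumption~\ref{ass:fg}, \eqref{plapalce}, and the Sobolev/trace embeddings of $\W$), hence a unique local solution $u_N$. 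Testing against $u_N'$ gives the identity $\frac{d}{dt}\E_N+\norm{u_N'}_{1,2}^2=\int_\O f(u_N)u_N'\,dx+\int_\G h(\t u_N)\t u_N'\,dS$ with $\E_N=\tfrac12\norm{u_N'}_2^2+\tfrac1p\norm{u_N}_{1,p}^p$; the crucial observation is that the right-hand side is $\le\norm{f(u_N)}_{6/5}\norm{u_N'}_6+\abs{h(\t u_N)}_{4/3}\abs{\t u_N'}_4\le\tfrac12\norm{u_N'}_{1,2}^2+C\big(\norm{f(u_N)}_{6/5}^2+\abs{h(\t u_N)}_{4/3}^2\big)$ by H\"older and the embeddings $\H\into\L[6]$, $\H\into\Lb[4]$, so the damping absorbs the factor carrying the time derivative, while $\norm{f(u_N)}_{6/5}$ and $\abs{h(\t u_N)}_{4/3}$ are controlled by $\norm{u_N}_{1,p}$ through the local Lipschitz continuity of $f:\W\to\L[6/5]$ and $h\circ\t:\W\to\Lb[4/3]$ — which is exactly where the strict exponent bounds of Assumption~\ref{ass:fg} are used. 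A continuity (bootstrap) argument then yields $T>0$, depending only on $\E(0)$ and on the Lipschitz constants of these two maps on the ball $B_R\subset\W$ with $R$ prescribed by $\E(0)$, on which every $u_N$ is defined, together with bounds for $\{u_N\}$ in $L^\infty(0,T;\W)$ and $\{u_N'\}$ in $L^\infty(0,T;\L[2])\cap L^2(0,T;\H)$ that are uniform in $N$.

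Next I would pass to the limit. Along a subsequence, $u_N\rightharpoonup u$ weakly-$*$ in $L^\infty(0,T;\W)$, $u_N'\rightharpoonup u_t$ weakly-$*$ in $L^\infty(0,T;\L[2])$ and weakly in $L^2(0,T;\H)$, and (by \eqref{plaplace-opnorm}) $-\plap u_N\rightharpoonup\chi$ weakly-$*$ in $L^\infty(0,T;(\W)')$ for some $\chi$. Using the equation together with the uniform boundedness of the $P_N$ on $\W$ one obtains a bound for $\{u_N''\}$ in $L^2(0,T;(\W)')$, so the Aubin--Lions--Simon lemma gives $u_N\to u$ in $C([0,T];\L[s])$ for every $s<\tfrac{3p}{3-p}$ and $u_N'\to u_t$ strongly in $L^2(0,T;\L[2])$, together with the analogous strong convergence of the traces. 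The Lipschitz-type bounds of Remark~\ref{rmk:fbound}, again using the strict inequalities in Assumption~\ref{ass:fg}, then give $f(u_N)\to f(u)$ in $L^2(0,T;\L[6/5])$ and $h(\t u_N)\to h(\t u)$ in $L^2(0,T;\Lb[4/3])$. Passing to the limit in the Galerkin identities tested against $w_j\eta(t)$ and using density of $\bigcup_N V_N$ in $\W$, I find that $u$ satisfies \eqref{slnid} and the regularity of Definition~\ref{def:weaksln}, but with $\chi$ in place of $-\plap u$; the initial data pass to the limit, and the uniform bounds on $u_N,u_N',u_N''$ plus the standard weak-continuity lemmas give $u\in C_w([0,T];\W)$ and $u_t\in C_w([0,T];\L[2])\cap L^2(0,T;\H)$.

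The main obstacle — and the heart of the argument — is the identification $\chi=-\plap u$, which I would carry out by a Minty--Browder argument. Since $u_t\in L^2(0,T;\H)$, the function $u$ is now an admissible test function; inserting $\phi=u$ into the limiting weak identity and using $\int_0^T\langle-\mlap u_t,u\rangle_2\,d\tau=\tfrac12\norm{u(T)}_{1,2}^2-\tfrac12\norm{u_0}_{1,2}^2$ expresses $\int_0^T\langle\chi,u\rangle_p\,d\tau$ in terms of the endpoint terms, the source terms, and $\int_0^T\norm{u_t}_2^2\,d\tau$. Testing the finite Galerkin identity against $u_N$ and integrating over $(0,T)$ expresses $\int_0^T\langle-\plap u_N,u_N\rangle_p\,d\tau=\int_0^T\norm{u_N}_{1,p}^p\,d\tau$ through the corresponding quantities for $u_N$; taking $\limsup_N$ — using $u_N(T)\rightharpoonup u(T)$ in $\W$ and $u_N'(T)\rightharpoonup u_t(T)$ in $\L[2]$ (justified via the equicontinuity of $t\mapsto(u_N'(t),\phi)$ coming from the $u_N''$ bound), weak lower semicontinuity of $\norm{u_N(T)}_{1,2}^2$, the strong $L^2$ convergence of $u_N'$, and the convergence of the sources — yields $\limsup_N\int_0^T\langle-\plap u_N,u_N\rangle_p\,d\tau\le\int_0^T\langle\chi,u\rangle_p\,d\tau$. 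Combined with the monotonicity of $-\plap:\W\to(\W)'$ (see \eqref{plapalce}), the standard Minty device — replace $u$ by $u-\l w$, divide by $\l>0$, let $\l\to0^+$, use hemicontinuity of $-\plap$ — forces $\chi=-\plap u$. It is precisely here that the linearity of the Voigt damping is essential: there is only the single nonlinear operator $-\plap u$ to identify, whereas an $m$-Laplacian damping $\Delta_m u_t$ with $m>2$ would require simultaneously identifying a second weak limit paired against the low-regularity object $u_t$ (which lies in $L^2(0,T;\H)$, not in $L^p(0,T;\W)$), something this scheme does not provide.

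Finally I would derive the energy inequality \eqref{energy-2ndid} by passing to the limit in the Galerkin energy identity: $\E(t)+\int_0^t\norm{u_t}_{1,2}^2\,d\tau\le\liminf_N\big(\E_N(t)+\int_0^t\norm{u_N'}_{1,2}^2\,d\tau\big)$ by weak lower semicontinuity, while the right-hand side $\E_N(0)+\int_0^t\int_\O f(u_N)u_N'\,dx\,d\tau+\int_0^t\int_\G h(\t u_N)\t u_N'\,dS\,d\tau$ converges — by strong convergence of $\E_N(0)$, of $f(u_N)$ in $L^2(0,t;\L[6/5])$ and $h(\t u_N)$ in $L^2(0,t;\Lb[4/3])$, and weak convergence of $u_N'$ — to $\E(0)+\int_0^t\int_\O f(u)u_t\,dx\,d\tau+\int_0^t\int_\G h(\t u)\t u_t\,dS\,d\tau$; this gives \eqref{energy-2ndid} for every $t$ thanks to the weak continuity of $u$ and $u_t$. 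The equivalent form \eqref{energy-1stid} then follows from the chain rules $\frac{d}{dt}\int_\O F(u)\,dx=\int_\O f(u)u_t\,dx$ and $\frac{d}{dt}\int_\G H(\t u)\,dS=\int_\G h(\t u)\t u_t\,dS$, which are licit because $f(u)\in L^\infty(0,T;\L[6/5])$, $u_t\in L^2(0,T;\L[6])$ and the boundary analogues hold. Altogether this establishes that $u$ is a weak solution of \eqref{wave} on $[0,T]$ in the sense of Definition~\ref{def:weaksln} satisfying \eqref{energy-2ndid} and the equivalent \eqref{energy-1stid}.
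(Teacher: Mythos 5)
Your proposal is essentially correct, but it takes a genuinely different and more compressed route than the paper. The paper proceeds in three stages: first a Galerkin construction assuming $f:\W\to\L[2]$ and $h\circ\t:\W\to\Lb[2]$ are \emph{globally} Lipschitz (Section~\ref{S2}), then a norm-based truncation $f_K$, $h_K$ to handle locally Lipschitz-into-$L^2$ sources and to pin down the existence time in terms of $\E(0)$ (Section~\ref{S3}), and finally pointwise cutoffs $f_n=f\eta_n$, $h_n=h\eta_n$ to reach the genuinely supercritical range where $f$ fails to be locally Lipschitz into $\L[2]$ at all (Section~\ref{S4}). You instead run a single Galerkin scheme on the original sources, observing that the a priori estimate never needs $f(u_N)\in\L[2]$: pairing $f(u_N)\in\L[6/5]$ against $u_N'\in\L[6]$ and $h(\t u_N)\in\Lb[4/3]$ against $\t u_N'\in\Lb[4]$, and absorbing $\norm{u_N'}_{1,2}^2$ into the Kelvin--Voigt damping, reduces everything to the local Lipschitz continuity of Lemma~\ref{lem:f-lipschitz} and a nonlinear Gronwall/bootstrap for $\E_N$. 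This buys a substantially shorter proof and makes transparent exactly where the damping compensates for supercriticality; the paper's layered truncations buy, in exchange, a clean global statement for globally Lipschitz sources (reused in Section~\ref{S5}) and an explicit bookkeeping of how $T$ depends on the truncation-independent Lipschitz constants. The identification of the $p$-Laplacian limit via the $\limsup$/Minty--Browder device and the derivation of \eqref{energy-2ndid} by weak lower semicontinuity are the same in both arguments.

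Two technical points in your writeup deserve repair, though neither is fatal. First, you posit a basis of $\W$ whose $L^2$-orthogonal projections are uniformly bounded on $\W$ so as to bound $u_N''$ in $L^2(0,T;(\W)')$; such uniform boundedness on $W^{1,p}$ is not obvious for $p\neq 2$, and the paper deliberately sidesteps it by using the Robin eigenfunctions as a Schauder basis of $\D(\A)$ and estimating $u_N''$ only in $L^2(0,T;(\D(\A))')$, which is still enough for Aubin--Lions--Simon since $\W\into\W[1-\epsilon,p]\into(\D(\A))'$. Second, the endpoint term $(u_N'(t),u_N(t))_\Omega$ does not converge merely because both factors converge weakly; you need the strong convergence $u_N(t)\to u(t)$ in $\L[2]$ (which your $C([0,T];\L[s])$ compactness does supply) paired with the weak convergence of $u_N'(t)$, exactly as in item (i) of the proof of Proposition~\ref{prop:limlaplace}.
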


The proof of Theorem~\ref{thm:exist} is carried out in Sections~\ref{S2} 
through \ref{S4}, beginning first with the added assumptions on the source feedback terms $f,h$ and then utilizing a series of truncation arguments similar to \cite{BLR2, BLR1, PRT-p-Laplacain, RW}, amongst others.

\begin{rmk}\label{rmk:embeddings}
	If one chooses the sources $f$ and $h$ of order $q$ and $r$ it follows in accordance with Assumption~\ref{ass:fg} that their primitives $F$ and $H$ are of order $q+1$ and $r+1$ respectively.  From the energy inequality \eqref{energy-1stid}, one would hope to find that the embedding $\W\to\L[q+1]$ along with the trace $\W\tinto\Lb[r+1]$ are both valid in order to ensure integrability of the terms $$\int_\O F(u(t))\,dx\quad\text{ and }\quad\int_\G H(\t u(t))\,dS$$ 
	contained therein.  Indeed, it is readily verified that this is the case given that $2<p<3$.

\end{rmk}

Provided the source terms $f$ and $h$ are of sufficiently small order one would expect the strong damping in $\O$ to produce a global solution which has finite energy on $[0,\infty)$ in line with the results of Webb in \cite{W}, for instance.  This is indeed the case, and the following theorem which is proven in Section~\ref{S5} establishes sufficient growth conditions for a global solution.

\smallskip

\begin{thm}[\textbf{Global Solutions}]\label{thm:global} If $r,q\leq p/2$ in addition to the assumptions of Theorem~\ref{thm:exist}, then the weak solution $u$ furnished by Theorem~\ref{thm:exist} is a global solution and the existence time $T$ may be taken arbitrarily large.
\end{thm}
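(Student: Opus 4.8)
The plan is to promote the energy inequality \eqref{energy-1stid} to an \emph{a priori} bound on the positive energy $\E(t)$ that does not deteriorate with the length of the existence interval, and then to iterate Theorem~\ref{thm:exist}. For the first task I would exploit the hypothesis $q,r\le p/2$. Since $p>2$ this forces $q+1\le\frac p2+1<p$ and likewise $r+1<p$, and moreover $q+1<p<\frac{3p}{3-p}$ and $r+1<p\le\frac{2p}{3-p}$, so both the embedding $\W\into\L[q+1]$ and the trace embedding $\W\tinto\Lb[r+1]$ are available. By Remark~\ref{rmk:embeddings} the primitives $F,H$ are of order $q+1$ and $r+1$; combining $|F(s)|\le C(|s|^{q+1}+1)$ and $|H(s)|\le C(|s|^{r+1}+1)$ with these embeddings and then Young's inequality (legitimate because $q+1,r+1<p$), one gets, for every $\eta>0$, a constant $C_\eta$ with
\begin{align*}
\int_\O F(u(t))\,dx+\int_\G H(\t u(t))\,dS\le \eta\,\|u(t)\|_{1,p}^p+C_\eta .
\end{align*}
Taking $\eta=\frac1{2p}$ in the identity $E(t)=\E(t)-\int_\O F(u(t))\,dx-\int_\G H(\t u(t))\,dS$ gives $E(t)\ge\frac12\E(t)-C_\eta$, so \eqref{energy-1stid}, after discarding the nonnegative damping integral, yields
\begin{align*}
\E(t)\le 2E(0)+2C_\eta=:M_0\qquad\text{for every }t\text{ in the interval of existence,}
\end{align*}
where $M_0$ depends only on $(u_0,u_1)$ (note that $E(0)<\infty$ by Remark~\ref{rmk:embeddings}).

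With this bound secured I would argue by continuation. Let $T^\ast\in(0,\infty]$ be the supremum of the $T$ for which \eqref{wave} has a weak solution on $[0,T]$, and suppose $T^\ast<\infty$. Since the ball in Theorem~\ref{thm:exist} has radius prescribed by the initial positive energy, that theorem supplies a uniform existence time $\tau_0=\tau_0(M_0)>0$ valid for arbitrary initial data of positive energy at most $M_0$. By the previous step $\E(t)\le M_0$ on $[0,T^\ast)$, and because $u\in C_w([0,T^\ast);\W)$ and $u'\in C_w([0,T^\ast);\L[2])$, for any $t_1\in(T^\ast-\tau_0,T^\ast)$ the pair $(u(t_1),u'(t_1))\in\W\times\L[2]$ is well defined and has positive energy $\le M_0$. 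Restarting \eqref{wave} from $(u(t_1),u'(t_1))$ produces a weak solution on $[t_1,t_1+\tau_0]$; its concatenation with $u|_{[0,t_1]}$ is a weak solution on $[0,t_1+\tau_0]$, since the regularity classes of Definition~\ref{def:weaksln} are additive over the two subintervals, the values at $t_1$ agree in $\W$ and in $\L[2]$ by weak continuity, and the identity \eqref{slnid}, which remains valid with any intermediate instant as lower limit, adds over the two pieces (a general test function on $[0,t_1+\tau_0]$ being recovered by approximation). As $t_1+\tau_0>T^\ast$, this contradicts the choice of $T^\ast$; hence $T^\ast=\infty$ and the existence time may be taken arbitrarily large.

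The crux is the absorption estimate in the first paragraph: the condition $q,r\le p/2$ is precisely what makes $q+1$ and $r+1$ strictly less than $p$, so that the source potentials $\int_\O F(u)\,dx$ and $\int_\G H(\t u)\,dS$ have strictly lower homogeneity than the principal term $\frac1p\|u\|_{1,p}^p$ and can be absorbed into it, turning \eqref{energy-1stid} into a genuine global-in-time bound. Everything else — the uniformity of the local existence time over data of bounded energy and the gluing of weak solutions across $t_1$ — is routine.
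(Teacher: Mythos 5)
Your argument is correct, but it takes a genuinely different route from the paper's. The paper works with the form \eqref{energy-2ndid} of the energy inequality: it estimates $\int_\O f(u)u'\,dx$ and $\int_\G h(\t u)\t u'\,dS$ by Cauchy--Schwarz and Young, uses $2q,2r\le p$ to get $\|u\|_{2q}^{2q}\le C(\|u\|_{1,p}^p+1)$ via $\W\into\L[2q]$ (and the analogous trace bound), and closes with Gronwall to obtain $\E(t)\le(\E(0)+Ct)e^{Ct}$; globality then follows from the continuation alternative imported from \cite{GR,PRT-p-Laplacain}, namely that a non-global solution must have $\E(t)+\int_0^t\|u'(\tau)\|_{1,2}^2\,d\tau$ blowing up at a finite time. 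You instead start from \eqref{energy-1stid} and absorb the potentials $\int_\O F(u)\,dx$ and $\int_\G H(\t u)\,dS$ into $\frac1p\|u\|_{1,p}^p$ using the strict subhomogeneity $q+1,\,r+1<p$ (which is exactly what $q,r\le p/2$ together with $p>2$ deliver), arriving at a \emph{time-independent} bound $\E(t)\le 2E(0)+2C_\eta$, and then you make the continuation explicit by restarting at $t_1$ and gluing. Both proofs spend the hypothesis $q,r\le p/2$ in an essential but different place, and both are sound; your absorption estimate is strictly stronger (uniform boundedness of trajectories rather than an exponentially growing bound, which is the more useful statement for long-time dynamics), at the price of having to carry out the restart-and-glue step that the paper delegates to the cited references. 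Two points worth making explicit if you write this up: the uniform local time $\tau_0(M_0)$ is legitimate because the formula for $T_0$ in the proof of Proposition~\ref{prop:ll} depends on the data only through $\E(0)$ (via the radius $K$ and the Lipschitz constants $L_{f,K}$, $L_{h,K}$); and the restarted piece satisfies the energy inequality with lower limit $t_1$, so the bound $\E\le M_0$ propagates to the concatenated solution, which is what lets the contradiction at $T^\ast$ go through.
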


\begin{rmk} If one were to take $p=2$ the results of Theorem~\ref{thm:global} would state that $f$ and $h$ are bounded by linear functions which is a result paralleling \cite{W} with the addition of boundary terms.  With $p>2$ the sources can be of higher order however, even though the action of the damping term $-\Delta u_t$ is unaffected by this change in $p$.  This peculiar effect has been noted before in \cite{PRT-p-Laplacain}, for instance.
Analogously, in \cite{RW} it was shown that the equation $u_{tt}-\Delta u - \Delta_p u_t=f(u)$ has global solutions when the order of the source term is no more than $p-1$.  In effect, the damping action of $-\Delta_p u_t$ with $2<p<3$ is, in some sense, ``stronger'' than the action of $-\Delta u_t$.
\end{rmk}

Conversely, with sources $f$ and $h$ of sufficient magnitude weak solutions of \eqref{wave} can be shown to have a finite right maximal interval of existence and achieve asymptotically infinite energy in a finite time.  Precisely, assuming the following form of the source functions:

\begin{assumption}\label{ass:blowupsrc}Let $f$ and $h$ be of the form
	\begin{alignat*}{6}
	f(s)&=(q+1)|s|^{q-1}s&\text{ with }&&p-1&<q&<\frac{5p}{2(3-p)},\\
	h(s)&=(r+1)|s|^{r-1}s&\text{ with }&&p-1&<r&<\frac{3p}{2(3-p)}.
	\end{alignat*}
\end{assumption}
Solutions of \eqref{wave} must then blow-up in finite time, stated precisely in the following and proven in Section~\ref{S6}:
\vspace{.1in}
\begin{thm}[\textbf{Blow-up of solutions}]\label{thm:blowup}Assume that $f$ and $h$ are as in Assumption~\ref{ass:blowupsrc} and that the initial data $(u_0,u_1)$ is chosen to have negative total initial energy, in the sense that 	
	\begin{align*}
	E(0)=\E(0)- ||u_0||_{q+1}^{q+1} - |u_0|_{r+1}^{r+1}<0.
	\end{align*} Then, any weak solution $u$ of \eqref{wave} (in the sense of Definition~\ref{def:weaksln}) necessarily blows up in finite time.  That is, there exists some $0<T<\infty$ such that 
	$$\limsup_{t\to T^-}\E(t)=\infty$$
	with positive energy 
	$\E(t)=\frac{1}{2}||u'(t)||_2^2 + \frac{1}{p}||u(t)||_{1,p}^p$  as in Theorem~\ref{thm:exist}.
\end{thm}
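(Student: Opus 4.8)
The plan is to adapt the by-now standard convexity/concavity method of Levine, as refined by Georgiev--Todorova and subsequent authors (e.g.\ \cite{GT,BLR3,PRT-p-Laplacain}), to the present setting with the $p$-Laplacian principal part and the nonlinear Robin boundary source. Define $N(t) = \tfrac12\|u(t)\|_2^2 + \tfrac12|\g u(t)|_2^2$, which encodes both the interior and boundary Kelvin--Voigt dissipation, and work with a Lyapunov functional of the form $\mathcal{Z}(t) = N(t)^{1-\sigma} + \varepsilon\bigl(\tfrac12\|u(t)\|_2^2 + \tfrac12|\g u(t)|_2^2\bigr)'$ reorganized; more precisely, following the usual template, set
\[
G(t) = N(t) \quad\text{and}\quad \mathcal{L}(t) = G(t)^{1-\sigma} + \varepsilon \Bigl( (u'(t),u(t))_\O + (\g u'(t), \g u(t))_\G \Bigr),
\]
for small $\sigma>0$ and $\varepsilon>0$ to be chosen. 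The goal is to show $\mathcal{L}$ satisfies a differential inequality $\mathcal{L}'(t) \geq C\, \mathcal{L}(t)^{1+\theta}$ for some $\theta>0$ on the maximal interval of existence, forcing $\mathcal{L}$—and hence $\E$—to become infinite in finite time.

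The key steps, in order, are as follows. First, from the energy identity (the equality version of \eqref{energy-1stid}, which holds because the damping exactly accounts for the dissipation in a blow-up argument where we may work with the regularized/Galerkin solutions and pass to the limit, or directly from \eqref{energy-2ndid}) one obtains that $E(t) \leq E(0) < 0$ for all $t$, where with the explicit sources of Assumption~\ref{ass:blowupsrc} we have $E(t) = \E(t) - \|u(t)\|_{q+1}^{q+1} - |\g u(t)|_{r+1}^{r+1}$; this gives the crucial coercivity $\|u(t)\|_{q+1}^{q+1} + |\g u(t)|_{r+1}^{r+1} \geq \E(t) - E(0) \geq \E(t)$, i.e.\ the source terms dominate the positive energy. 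Second, compute $N'(t) = (u'(t),u(t))_\O + (\g u'(t),\g u(t))_\G$ and then differentiate again, using the weak formulation \eqref{slnid} with $\phi = u$ to replace $(u'',u)$-type terms: this yields
\[
N''(t) = \|u'(t)\|_2^2 + |\g u'(t)|_2^2 - \|u(t)\|_{1,p}^p + (q+1)\|u(t)\|_{q+1}^{q+1} + (r+1)|\g u(t)|_{r+1}^{r+1} - \bigl( (\grad u', \grad u)_\O + \ldots \bigr),
\]
where the last parenthesis is the strong-damping cross term $\tfrac{d}{dt}N'$-type contribution $\|\grad u\|_2^2/2$'s derivative handled as $\tfrac12 (\,\|u\|_{1,2}^2\,)'$. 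Third, combine $N''$ with the energy inequality to eliminate $\|u\|_{1,p}^p$ in favor of $\E$ and then of the supercritical source norms (using $q,r > p-1$, equivalently $p < q+1$ and $p < r+1$), obtaining a lower bound of the form $N''(t) \geq c\bigl(\|u'\|_2^2 + |\g u'|_2^2 + \|u\|_{q+1}^{q+1} + |\g u|_{r+1}^{r+1}\bigr) - (\text{damping cross term})$. Fourth, differentiate $\mathcal{L}$, absorb the strong-damping cross term using Young's inequality together with the embeddings $\W \into \L[q+1]$ and $\g:\W \into \Lb[r+1]$ (valid by Remark~\ref{rmk:embeddings}) and the a priori bound $\|u\|_{1,2}^2 \leq C\|u\|_{1,p}^2 \leq C(\E(t))^{2/p} \leq C(\|u\|_{q+1}^{q+1} + |\g u|_{r+1}^{r+1})^{2/p}$, and finally invoke the elementary inequality $\bigl(a+b\bigr)^s \leq C(a^s + b^s)$ and the choice of $\sigma$ small enough that $G^{-\sigma}$-powers balance, to close the loop $\mathcal{L}' \geq C\mathcal{L}^{1+\theta}$.

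The main obstacle I expect is the handling of the strong-damping cross terms $\int_0^{\cdot}\!\bigl((\grad u',\grad u)_\O + (\g u',\g u)_\G\bigr)$ that arise from the $-\Delta u_t$ term when differentiating $N'$; unlike in the purely Kelvin--Voigt wave equation without a $p$-Laplacian, here one must control these against the $p$-energy $\|u\|_{1,p}^p$ and ultimately against the supercritical source norms, and the mismatch between the $W^{1,2}$ norm controlled by the damping and the $W^{1,p}$ norm appearing in the potential energy (with $p>2$) is exactly the delicate point—this is where the hypothesis $q,r > p-1$ is essential, since it is what guarantees the sources can absorb a term of $p$-growth after interpolation. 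A secondary technical point is justifying that the weak solution of Definition~\ref{def:weaksln} has enough regularity to legitimately use $\phi = u$ as a test function and to differentiate $N(t)$ twice; this is handled by a density/approximation argument, or by first establishing the differential inequality at the Galerkin level and passing to the limit, noting that the blow-up conclusion $\limsup_{t\to T^-}\E(t) = \infty$ is stable under the weak convergences used in Theorem~\ref{thm:exist}.
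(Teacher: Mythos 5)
Your overall strategy (a Levine--Georgiev--Todorova functional $\mathcal{L}=G^{1-\sigma}+\varepsilon N'$ driven to a differential inequality $\mathcal{L}'\geq C\mathcal{L}^{1+\theta}$) is the same family of argument the paper uses, and several of your ingredients are right: $E(t)\leq E(0)<0$ giving $S(t)\geq \E(t)$, the computation of $N''$ by testing \eqref{slnid} with $\phi=u$, the use of $q,r>p-1$ to keep the coefficient of $||u||_{1,p}^p$ positive, and the control of $||u(t)||_{1,2}^2$ by the source norms via the embeddings. However, there is a genuine gap at exactly the point you flag as the main obstacle, and your proposed fix does not close it. You take $G(t)=N(t)$, so $\mathcal{L}'(t)=(1-\sigma)N^{-\sigma}N'+\varepsilon N''$, and the strong-damping cross term inside $N''$ is $-\langle -\Delta_2 u'(t),u(t)\rangle_2$, bounded in absolute value by $2||u'(t)||_{1,2}\,||u(t)||_{1,2}$. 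Any Young splitting of this product produces a term $\delta\,||u'(t)||_{1,2}^2$, i.e.\ the full $W^{1,2}$ norm of the velocity \emph{pointwise in time}. Nothing in your $\mathcal{L}'(t)$ controls this: the good velocity term in $N''$ is only $||u'(t)||_2^2$, and the sources control spatial norms of $u$, not of $u'$. Your interpolation $||u||_{1,2}^2\leq C\E(t)^{2/p}\leq CS(t)^{2/p}$ handles the other factor of the product, but leaves the $||u'||_{1,2}^2$ piece orphaned, so the inequality $\mathcal{L}'\geq C\mathcal{L}^{1+\theta}$ cannot be closed.

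The paper's resolution is precisely a different choice of $G$: it sets $G(t)=\int_0^t ||u'(\tau)||_{1,2}^2\,d\tau - E(0)$, which is positive and increasing (since $E(0)<0$) and satisfies $G'(t)=||u'(t)||_{1,2}^2$. Then $Y=G^{1-\alpha}+\beta N'$ has $Y'=(1-\alpha)G^{-\alpha}G'+\beta N''$, and Young's inequality with the time-dependent weight $\epsilon G(t)^{\alpha}$ splits the cross term as
\begin{align*}
2\beta\,\langle -\Delta_2 u'(t),u(t)\rangle_2 \leq 2\beta\epsilon\, G(t)^{-\alpha}G'(t) + 2\beta\epsilon\, G(t)^{\alpha}||u(t)||_{1,2}^2,
\end{align*}
where the first summand is absorbed by $(1-\alpha)G^{-\alpha}G'$ for $\beta\epsilon$ small and the second is controlled, via $G\leq S$ and the embeddings, by $C(G(t)+||u(t)||_{1,p}^p)$ for $\alpha$ small. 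The energy inequality $G(t)\leq S(t)-\E(t)$ is also what converts the source terms into the lower bound on $Y'$ by $G$, $||u'||_2^2$, and $||u||_{1,p}^p$. A secondary issue: including $\frac12|\g u|_2^2$ in your $N$ forces you to evaluate $\frac{d}{dt}(\g u',\g u)_\G$, which involves $\g u''$ and is not accessible from the weak formulation (only $(u'',\phi)_\O$-type pairings are); the boundary damping $(\g u',\g u)_\G$ already arrives naturally inside $\langle -\Delta_2 u',u\rangle_2$ when you test with $\phi=u$, so you should take $N(t)=||u(t)||_2^2$ only, as the paper does.
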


\smallskip

Utilizing the Sobolev embedding $\W\to\L[2p/(3-p)]$ in dimension three along with the bounds in Remark~\ref{rmk:fbound} the source feedback term $f$, if regarded as a Nemytski operator $f:\W\to\L[2]$, is seen to be locally Lipschitz continuous provided $1\leq q < 3p/2(3-p)$.  Similarly, the map $h\circ\t:\W\to\Lb[2]$ is locally Lipschitz continuous provided $1\leq r<p/(3-p)$.  A stronger result provided by the following lemma  will be used frequently throughout this paper. Its proof can be found elsewhere, and thus we omit it here.

\begin{lem}[{See  \cite[Lem. 1.4]{MOHNICK1} and
	 \cite[Lem. 1.1]{PRT-p-Laplacain}}]\label{lem:f-lipschitz}
	Under the growth conditions given in Assumption~\ref{ass:fg}, the functions $f:\W[1-\epsilon,p]\to \L[6/5]$ and $h\circ\t :\W[1-\epsilon,p]\to \Lb[4/3]$ are locally Lipschitz continuous for sufficiently small $\epsilon\geq 0$.
\end{lem}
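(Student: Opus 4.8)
The plan is to derive both statements from the difference bounds in Remark~\ref{rmk:fbound}, together with H\"older's inequality and the embeddings $\W[1-\epsilon,p]\into\L[s_\epsilon]$ and $\W[1-\epsilon,p]\tinto\Lb[\sigma_\epsilon]$, where $s_\epsilon=\tfrac{3p}{3-(1-\epsilon)p}$ and $\sigma_\epsilon=\tfrac{2p}{3-(1-\epsilon)p}$. First, fix $R>0$ and take $u,v\in\W[1-\epsilon,p]$ with $\norm{u}_{1-\epsilon,p},\norm{v}_{1-\epsilon,p}\leq R$. For the interior source, Remark~\ref{rmk:fbound} gives, pointwise a.e.\ in $\O$, the estimate $|f(u)-f(v)|\leq C(|u|^{q-1}+|v|^{q-1}+1)|u-v|$, and H\"older's inequality with exponents $a,b$ satisfying $\tfrac1a+\tfrac1b=\tfrac56$ — placing $|u|^{q-1}+|v|^{q-1}+1$ in $\L[a]$ and $|u-v|$ in $\L[b]$ — yields
\[
\norm{f(u)-f(v)}_{6/5}\leq C\left(\norm{u}_{a(q-1)}^{q-1}+\norm{v}_{a(q-1)}^{q-1}+|\O|^{1/a}\right)\norm{u-v}_b .
\]

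The second step is to choose $b=s_\epsilon$, so that $\norm{u-v}_b\leq C\norm{u-v}_{1-\epsilon,p}$; this forces $\tfrac1a=\tfrac56-\tfrac1{s_\epsilon}$, and the remaining requirement is $a(q-1)\leq s_\epsilon$, which makes $\norm{u}_{a(q-1)}$ and $\norm{v}_{a(q-1)}$ controllable by $CR$ through the same embedding. A direct computation shows that, when $\epsilon=0$, the inequality $a(q-1)\leq s_0=\tfrac{3p}{3-p}$ is precisely equivalent to $q\leq\tfrac{5p}{2(3-p)}$. Since Assumption~\ref{ass:fg} imposes the \emph{strict} inequality $q<\tfrac{5p}{2(3-p)}$, there is room to spare, and since $s_\epsilon\to s_0$ as $\epsilon\to 0^+$, the inequality $a(q-1)<s_\epsilon$ persists for every sufficiently small $\epsilon\geq 0$. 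Collecting the estimates gives $\norm{f(u)-f(v)}_{6/5}\leq C(1+R^{q-1})\norm{u-v}_{1-\epsilon,p}$; the same bound with $v=0$ (together with $|f(u)|\leq C(|u|^q+1)$ and the identical exponent count $\tfrac{6q}{5}\leq s_\epsilon$) shows $f$ indeed maps $\W[1-\epsilon,p]$ into $\L[6/5]$, so $f$ is locally Lipschitz as claimed.

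The boundary map is treated identically: starting from $|h(\t u)-h(\t v)|\leq C(|\t u|^{r-1}+|\t v|^{r-1}+1)|\t u-\t v|$ on $\G$, apply H\"older on $\G$ with $\tfrac1a+\tfrac1b=\tfrac34$, take $b=\sigma_\epsilon$, and verify that the admissibility condition $a(r-1)\leq\sigma_0=\tfrac{2p}{3-p}$ is equivalent to $r\leq\tfrac{3p}{2(3-p)}$ — again exactly the borderline of Assumption~\ref{ass:fg} — so the strict inequality there leaves slack to absorb a small $\epsilon$, and the trace inequality yields $\abs{h(\t u)-h(\t v)}_{4/3}\leq C(1+R^{r-1})\norm{u-v}_{1-\epsilon,p}$. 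I do not expect a genuine obstacle here; the only delicate point is the bookkeeping of the H\"older exponents and the check that $a(q-1)\leq s_\epsilon$ and $a(r-1)\leq\sigma_\epsilon$ are compatible with the prescribed ranges of $q$ and $r$. It is in fact exactly the appearance of $\tfrac{5p}{2(3-p)}$ and $\tfrac{3p}{2(3-p)}$ as the breaking points of these two inequalities that explains why those thresholds are imposed in Assumption~\ref{ass:fg} to begin with; finiteness of $|\O|$ and $|\G|$ is used only to absorb the constant ``$+1$'' terms.
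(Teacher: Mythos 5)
Your argument is correct and is essentially the standard proof that the paper defers to its references for: pointwise difference bound from Remark~\ref{rmk:fbound}, H\"older with the splitting $\tfrac1a+\tfrac1b=\tfrac56$ (resp.\ $\tfrac34$), and the embedding/trace $\W[1-\epsilon,p]\into\L[s_\epsilon]$, $\W[1-\epsilon,p]\tinto\Lb[\sigma_\epsilon]$. Your exponent arithmetic checks out --- with $b=s_0$ one gets $a=\tfrac{6p}{7p-6}$ and $a(q-1)\le\tfrac{3p}{3-p}$ is exactly $q\le\tfrac{5p}{2(3-p)}$, and likewise $a=\tfrac{4p}{5p-6}$ with $a(r-1)\le\tfrac{2p}{3-p}$ is exactly $r\le\tfrac{3p}{2(3-p)}$ --- and you correctly take $b$ as large as the embedding allows (the cruder choice $b=6$ would only reach $q\le\tfrac{p+3}{3-p}<\tfrac{5p}{2(3-p)}$), with the strict inequalities in Assumption~\ref{ass:fg} absorbing the small $\epsilon$.
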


\section{Solutions for globally Lipschitz sources}\label{S2}
As a first step, our strategy is to employ a suitable Galerkin approximation scheme to show local existence of weak solutions of \eqref{wave} in the case where both $f:\W\to\L[2]$ and $h\circ\t:\W\to\Lb[2]$ are globally Lipschitz with constants $L_f$ and $L_h$, respectively.

\subsection{Approximate solutions}\label{S2.1}
To start we shall establish a suitable sequence $\{w_j\}_1^\infty$ with which to construct a sequence of approximate solutions.  This construction is done using eigenfunctions of the Lapalcian--a typical choice for this type of problem--in the following manner.  Let $\mathscr{A}=-\Delta$ with domain $\D(\A)=\{w\in\H[2] : \partial_\nu w + w=0 \text{ on } \Gamma\}\subset\L[2]$.  It is well known that $\mathscr{A}:\D(\A)\subset\L[2]\to\L[2]$ is positive, self-adjoint, and that $\mathscr{A}$ is the inverse of a compact operator. Thus, $\mathscr{A}$ has a countably infinite set of positive eigenvalues $\{\lambda_j\}_{j=1}^\infty$ with $\lambda_j\to\infty$ whose corresponding smooth eigenfunctions $\{w_j\}_{j=1}^\infty$ form an orthonormal basis for $\L[2]$ after suitable normalization. We can also define the fractional powers $\A^s$, $0\leq s\leq 1$, by $\A^sf=\sum_{j=1}^\infty \lambda_j^s(f,w_j)_\O w_j$. Each domain $\D(\A^s)$ is itself a Hilbert space with equivalent inner product 
$$(u,v)_{\D(\A^s)} = \sum_{j=1}^\infty \lambda_j^{2s} u_jv_j$$ for $u=\sum_{j=1}^\infty u_jw_j$ and $v=\sum_{j=1}^\infty v_jw_j$ with convergence in the $\L[2]$ sense. In particular,  the sequence $\{w_j\}_{j=1}^\infty$ forms a Schauder basis for $\D(\A)$.  

Let $V_N= \spn \{w_1,\cdots, w_N\} $ and $\mathscr{P}_N$ be the orthogonal projection of $L^2(\O)$ onto $V_N$.
Corresponding to each $N\in \N$ one may find sequences of scalars $\{u_{N,j}^0\}_{j=1}^\infty$ and $\{u_j^1\}_{j=1}^\infty$ such that 
\begin{subequations}\begin{alignat}{6}\label{approx-ics}
 &\sum_{j=1}^N u^0_{N,j} w_j&\to &u_0&\text{ strongly in }&\W&\text{ as }N\to\infty,\\
\mathscr{P}_Nu_1 =&\sum_{j=1}^N u^1_j w_j&\to &u_1&\text{ strongly in }&\L[2]&\text{ as }N\to\infty
\end{alignat}\end{subequations}
for given initial data $(u_0,u_1)\in\W\times\L[2]$. 
 For the initial displacement $u_0$, these sequences of scalars are obtained using the density of $\spn\{w_j\}_1^\infty$ in $\W$, and  for the initial velocity $u_1$,  $u_j^1=(u_1,w_j)_\O$.

We now seek to construct a sequence of approximate solutions of the form 
\begin{align}\label{approxform}
u_N(x,t)=\sum_{j=1}^N u_{N,j}(t)w_j(x) 
\end{align} 
that satisfies the system: 
\begin{subequations}\label{approxslns}
	\begin{multline}\label{approx1}
	(u_N'',w_j)_\Omega 
	+\overbrace{(|\grad u_N|^{p-2}\grad u_N,\grad w_j)_\Omega + (|\t u_N|^{p-2}\t u_N,\t w_j)_\Gamma}^{\langle -\Delta_p u_N,w_j\rangle_p} \\
	\\
+\underbrace{(\grad u_N',\grad w_j)_\Omega + (\t u_N',\t w_j)_\Gamma}_{\langle -\Delta_2 u_N,w_j\rangle_2 } = (f(u_N),w_j)_\Omega + (h(\t u_N),\t w_j)_\Gamma
	\end{multline} 
	\begin{align}\label{approx2}
	u_{N,j}(0)=u^0_{N,j},\,\,\,u'_{N,j}(0)=u^1_j,
	\end{align}
\end{subequations}
where $j=1,\ldots,N$. 

Indeed,  (\ref{approx1})-(\ref{approx2}) is  an initial value problem for a second order $N\times N$ system of ordinary differential equations with continuous nonlinearities in the unknown functions $u_{N,j}$ and their time derivatives.  Therefore, it follows from the Cauchy-Peano theorem that the initial-value problem \eqref{approxslns} has a solutions $u_{N,j}\in C^2([0,T_N])$, $j=1,\ldots,N$ for some $T_N>0$.

An immediate observation is that the sequence of approximate solutions $\{u_N\}$ satisfies:
\begin{align}\label{2.4}
(u_N(0),u_N'(0))\to (u_0,u_1)\text{ strongly in }\W\times\L[2].
\end{align}

\subsection{A priori estimates} We aim to demonstrate that each of the approximate solutions $u_N$ exists on a non-degenerate interval $[0,T]$ independent of $N$. 
\begin{prop}\label{prop:apriori} Each approximate solution $u_N$ exists on $[0,\infty)$.
Further, for any $0<T<\infty$, the sequence of approximate solutions $\{u_N\}_1^\infty$ satisfies: 
	\begin{subequations}\begin{alignat}{3}
		\{u_N\}_1^\infty&\text{ is a bounded sequence in }&&L^{\infty} (0,T;\W), \label{apriori-a}\\
		\{u_N'\}_1^\infty&\text{ is a bounded sequence in }&&L^{\infty} (0,T;\L[2]),\label{apriori-b}\\
		\{u_N'\}_1^\infty&\text{ is a bounded sequence in }&&L^2(0,T;\H[1]),\label{apriori-c}\\
		\{u_N''\}_1^\infty&\text{ is a bounded sequence in }&&L^2(0,T;(\D(\A))').\label{apriori-d}
		\end{alignat}
	\end{subequations}
\end{prop}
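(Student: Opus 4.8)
The plan is to establish the standard Galerkin energy identity, close it with Gronwall, and then bootstrap to the remaining bounds. Testing \eqref{approx1} with $u_{N,j}'(t)$ and summing over $j=1,\dots,N$ is legitimate because $u_N(\cdot,t)$ is a finite linear combination of smooth eigenfunctions with coefficients in $C^2$, so the chain rule applies termwise: the inertial term produces $\tfrac12\frac{d}{dt}||u_N'||_2^2$, the $p$-Laplacian term produces $\langle-\Delta_p u_N,u_N'\rangle_p=\tfrac1p\frac{d}{dt}||u_N||_{1,p}^p$, and the Kelvin--Voigt term produces $\langle-\Delta u_N',u_N'\rangle_2=||u_N'||_{1,2}^2$ (using $||\cdot||_{1,2}$ in the formal $p=2$ sense). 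Writing $\E_N(t)=\tfrac12||u_N'(t)||_2^2+\tfrac1p||u_N(t)||_{1,p}^p$, this gives the identity
\[
\frac{d}{dt}\E_N(t)+||u_N'(t)||_{1,2}^2=(f(u_N),u_N')_\O+(h(\t u_N),\t u_N')_\G .
\]

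Next I would bound the right-hand side using the \emph{globally} Lipschitz hypotheses in force throughout Section~\ref{S2}: taking $v=0$ in the Lipschitz estimates (and noting $f(0),h(0)$ are constants) gives $||f(u_N)||_2\le C(||u_N||_{1,p}+1)$ and $|h(\t u_N)|_2\le C(||u_N||_{1,p}+1)$. By Cauchy--Schwarz and Young, $(f(u_N),u_N')_\O\le C(||u_N||_{1,p}^2+1)+\tfrac12||u_N'||_2^2$, and since $p>2$ we have $||u_N||_{1,p}^2\le C(||u_N||_{1,p}^p+1)$, so this term is $\le C(\E_N(t)+1)$. For the boundary term, the trace inequality $|\t u_N'|_2\le C||u_N'||_{1,2}$ together with Young gives $(h(\t u_N),\t u_N')_\G\le\tfrac12||u_N'||_{1,2}^2+C(\E_N(t)+1)$, where $\tfrac12||u_N'||_{1,2}^2$ is absorbed by the damping term on the left. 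One arrives at $\frac{d}{dt}\E_N(t)+\tfrac12||u_N'(t)||_{1,2}^2\le C(\E_N(t)+1)$. Since \eqref{2.4} bounds $\E_N(0)$ uniformly in $N$, Gronwall's inequality yields $\sup_{[0,T]}\E_N(t)\le C(T)$ for every finite $T$, with $C(T)$ independent of $N$. This precludes finite-time blow-up of the ODE system \eqref{approxslns}, so the continuation theorem forces $T_N=\infty$; the bound on $\sup_{[0,T]}\E_N$ is \eqref{apriori-a}--\eqref{apriori-b}, and integrating the differential inequality over $[0,T]$ and discarding the nonnegative $\E_N(T)$ gives \eqref{apriori-c}.

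For \eqref{apriori-d} I would estimate $u_N''(t)$ in $(\D(\A))'$ pointwise in $t$. Given $\phi\in\D(\A)$, since $u_N''\in V_N$ we have $(u_N'',\phi)_\O=(u_N'',\mathscr{P}_N\phi)_\O$, which by \eqref{approx1} equals
\[
-\langle-\Delta_p u_N,\mathscr{P}_N\phi\rangle_p-\langle-\Delta u_N',\mathscr{P}_N\phi\rangle_2+(f(u_N),\mathscr{P}_N\phi)_\O+(h(\t u_N),\t\mathscr{P}_N\phi)_\G .
\]
The structural fact I would invoke is that $\mathscr{P}_N$ is an orthogonal projection in each $\D(\A^s)$-inner product, hence a contraction there; combined with the embeddings $\D(\A)\into\W$ and $\D(\A^{1/2})=\H[1]$ (equivalent norms), this yields $||\mathscr{P}_N\phi||_{1,p}+||\mathscr{P}_N\phi||_{1,2}+||\mathscr{P}_N\phi||_2\le C||\phi||_{\D(\A)}$ uniformly in $N$. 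Then \eqref{plaplace-opnorm}, the trace inequality, and the source bounds above give $||u_N''(t)||_{(\D(\A))'}\le C\big(||u_N(t)||_{1,p}^{p-1}+||u_N'(t)||_{1,2}+||u_N(t)||_{1,p}+1\big)$; squaring, integrating over $[0,T]$, and invoking \eqref{apriori-a} and \eqref{apriori-c} gives \eqref{apriori-d}.

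The energy identity and Gronwall step are routine; I expect the main obstacle to be the last estimate, and specifically the verification that $\mathscr{P}_N$ is bounded \emph{uniformly in $N$} from $\D(\A)$ into $\W$ and into $\H[1]$ --- boundedness on $\L[2]$ alone does not suffice --- which is exactly the point at which it matters that the $w_j$ are eigenfunctions of the Robin Laplacian, so that $\mathscr{P}_N$ commutes with the fractional powers $\A^s$ and therefore passes through the higher-order norms.
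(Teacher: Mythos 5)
Your proposal is correct and follows essentially the same route as the paper: the energy identity obtained by testing with $u_{N,j}'$, the global Lipschitz bounds $\|f(u_N)\|_2,\ |h(\t u_N)|_2\le C(\|u_N\|_{1,p}+1)$ combined with Young's inequality and $p>2$, Gronwall to get \eqref{apriori-a}--\eqref{apriori-c} and global existence, and a projection-based duality estimate for \eqref{apriori-d}. The only cosmetic difference is that you justify the uniform $\D(\A)\to\D(\A)$ bound of the projection by its orthogonality in the $\D(\A^s)$ inner products, whereas the paper invokes the Schauder basis constant of $\{w_j\}$ in $\D(\A)$; since the canonical projection for this eigenbasis is the orthogonal one, these are the same observation.
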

\begin{proof}
Multiplying \eqref{approx1} by $u'_{N,j}$ and summing over $j=1,\ldots,N$, one obtains  
\begin{align}\label{apriori-1}
\frac{1}{2}\frac {d}{dt}||u_N'(\tau)||_2^2 &+ \frac{1}{p}\frac{d}{dt}||u_N(\tau)||_{1,p}^p + ||u_N'(\tau)||_{1,2}^2 \notag \\ &= \int_\Omega f(u_N)u_N'\,dx + \int_ \Gamma  h(\t u_N)\t u_N'\,dS
\end{align}
for each $\tau\in[0,T_N]$.   Integrating \eqref{apriori-1} on $[0,t]$ for $t\in[0,T_N]$ and defining the positive energy
\begin{align*}
	\E_N(t)=\frac{1}{2}||u_N'(t)||_2^2 + \frac{1}{p}||u_N(t)||_{1,p}^p
\end{align*}
we may thus obtain from \eqref{apriori-1} the relation
\begin{multline}\label{apriori-3}
\E_N(t) + \int_0^t ||u_N'(\tau)||_{1,2}^2\,d\tau = \int_0^t\int_\Omega f(u_N(\tau))u_N'(\tau)\,dxd\tau \\+ \int_0^t\int_\Gamma h(\t u_N(\tau))\t u_N'(\tau)\,dSd\tau + \E_N(0).\ptag{apriori-1}
\end{multline}
In order to demonstrate a bound on $\E_N$ we first address the terms due to the sources.   Under the assumption that $f:\W\to\L[2]$ is globally Lipschitz continuous we have 
\begin{align}\label{apriori-3.4}
||f(u_N)||_2 &\leq ||f(u_N)-f(0)||_2 + ||f(0)||_2\notag\\
&\leq L_f||u_N||_{1,p}+||f(0)||_2\notag\\
&\leq C(||u_N||_{1,p}+1),
\end{align}
so that by H\"older and Young's inequalities with $\epsilon=1/4$,

\begin{align}\label{apriori-3.5}
\int_\Omega f(u_N)u_N'\,dx &\leq ||f(u_N)||_2||u_N'||_2\notag\\
&\leq C(||u_N||_{1,p}+1)^2 + \frac{1}{4}||u_N'||_{1,2}^2\notag \\
&\leq C(||u_N||_{1,p}^p+1) + \frac{1}{4}||u_N'||_{1,2}^2,
\end{align}
where we have used the assumption $p>2$. The constant $C$ in \eqref{apriori-3.5} depends upon  the values of $||f(0)||_2$ and $L_f$, but is independent of $N$.  Since $h\circ \t:\W\to\Lb[2]$ is also globally Lipschitz the same argument as in \eqref{apriori-3.4} and \eqref{apriori-3.5} yields 
\begin{align}\label{apriori-3.7}
\int_\G  h(\t u_N)\t u_N'\,dS 
&\leq C(||u_N||_{1,p}^p +1) + \frac{1}{4}||u_N'||_{1,2}^2.
\end{align}
By applying the bounds \eqref{apriori-3.5} and \eqref{apriori-3.7}  to equation \eqref{apriori-3}, we obtain
\begin{align}\label{apriori-4}
\E_N(t)+\frac{1}{2}\int_0^t||u_N'(\tau)||_{1,2}^2\,d\tau &\leq 
C\int_0^t (||u_N(\tau)||_{1,p}^p+1)\,d\tau + \E_N(0).
\end{align}
Recalling \eqref{2.4}, we see the that the sequence  $\{\E_N(0) = \frac{1}{2}||u_N'(0)||_2^2 + \frac{1}{p}||u_N(0)||_{1,p}^p\}$ 
is  bounded, say $\E_N(0)\leq C'$ for all $N$. In addition, $||u_N(\tau)||_{1,p}^p<p\E_N(\tau)$  so that each $\E_N(t)$ thereby satisfies the integral inequality 
\begin{align}\label{apriori-4.5}
\E_N(t)\leq C\int_0^t \left( \E_N(\tau) +1 \right)\,d\tau + C'
\end{align} 
for some positive constants $C, C'$ independent of $N$. By applying Gronwall's inequality to \eqref{apriori-4.5} we see that $\E_N(t)$ is finite on $[0,T]$ for \emph{any} $0<T<\infty$, and upon this interval each $u_N$ must exist by the Cauchy-Peano theorem. This bound on $\E_N(t)$ also establishes \eqref{apriori-a} and \eqref{apriori-b}, with \eqref{apriori-c} following immediately from \eqref{apriori-4} whose right hand side is now seen to be bounded.

For the final claim, given any $\phi\in\D(\A)$ we know that $\phi$ has a unique expansion in terms of the Schauder basis $\{w_j\}_1^\infty$ for $\D(\A)$ as 
$$\phi=\sum_{j=1}^\infty a_jw_j,$$
for scalars $\{a_j\}_1^\infty\subset \R$. Furthermore, by taking $$S_N\phi=\sum_{j=1}^Na_jw_j$$ to be the canonical projection associated with the basis $\{w_j\}_1^\infty$ we know that $S_N\phi\to \phi$ strongly in $\D(\A)$ and also that $||S_N\phi||_{\D(\A)}\leq \mathcal{C}||\phi||_{\D(\A)}$ for a finite basis constant $1\leq \mathcal{C}<\infty$.  Thus, from \eqref{approx1} and the orthogonality of the sequence $\{w_j\}$ in $\L[2]$ we find with $\langle\cdot,\cdot\rangle$ denoting the pairing between $\D(\A)$ and its dual that 
\begin{align*}
|\langle u_N'',\phi\rangle|  %
&=|(u_N'',S_N\phi)_\O|\notag \\
&\leq |\langle -\Delta_p u_N, S_N\phi\rangle_p| +| \langle -\Delta_2 u_N',S_N\phi\rangle_2| \\
&\qquad
+ |(f(u_N),S_N\phi)_\Omega| + |(h(\t u_N),\t S_N\phi)_\Gamma|\\
&\leq 2||u_N(t)||_{1,p}^{p-1}||S_N\phi||_{1,p} + 2||u_N'(t)||_{1,2}||S_N\phi||_{1,2} \notag \\ 
&\qquad + ||f(u_N(t))||_2||S_N\phi||_2 + |h(\t u_N(t))|_2|\t S_N\phi|_2\\
&\leq C\Big(||u_N(t)||_{1,p}^{p-1} + ||u_N'(t)||_{1,2} + ||f(u_N(t))||_2 + |h(\t u_N(t))|_2 \Big)||S_N\phi||_{\D(\A)}\notag\\ 
&\leq C\Big(||u_N(t)||_{1,p}^{p-1} + ||u_N(t)||_{1,p}+  ||u_N'(t)||_{1,2} + 1\Big)||\phi||_{\D(\A)}\notag
\end{align*}
by utilizing the operator norm bound on $-\Delta_p$ along with the bounds on $||f(u_N(t))||_2$ and $|h(\t u_N(t))|_2$ from the argument in \eqref{apriori-3.4}.  Since we have already shown that $||u_N(t)||_{1,p}$ is bounded on $[0,T]$ from \eqref{apriori-a} we find that  
$$|\langle u_N'',\phi\rangle| \leq C(1+||u_N'(t)||_{1,2} )||\phi||_{\D(\A)}.$$ 
Thus, as $||u_N'(t)||_{1,2}\in L^2(0,T)$ from \eqref{apriori-c} it follows that $||u_N''(t)||_{(\D(\A))'}\in L^2(0,T)$, completing the proof.
\end{proof}

An immediate consequence of Proposition~\ref{prop:apriori} along with the Banach-Alaoglu theorem and the standard Aubin-Lions-Simon compactness theorems (e.g., \cite[Thm. II.5.16]{Boyer2013}) is the following:
 \begin{subequations}
\begin{cor}\label{cor:converg} For all sufficiently small $\epsilon>0$ there exists a function $u$ and a subsequence of $\{u_N\}$ (still denoted by $\{u_N\}$) such that  
	 \begin{alignat}{2}
	 	u_N&\to u &\text{ weak* in }&L^\infty(0,T;\W), \label{converg:a} \\
		u_N'&\to u' &\text{ weak* in }&L^\infty(0,T;\L[2]),\label{converg:b} \\
		u_N'&\to u' &\text{ weakly in }&L^2(0,T;\H[1]),\label{converg:c} \\
		u_N &\to u &\text{ strongly in }&C([0,T];\W[1-\epsilon,p]),\label{converg:d} \\
		u_N' &\to u' &\text{ strongly in }&L^2(0,T;\W[1-\epsilon,2]),\label{converg:e}
	\end{alignat}
\end{cor}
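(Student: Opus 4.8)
The statement is extracted from the a priori bounds of Proposition~\ref{prop:apriori} by the Banach--Alaoglu theorem (for the weak and weak* convergences) together with the Aubin--Lions--Simon compactness theorems (for the two strong convergences); it is genuinely a corollary, and I outline the steps together with the few points requiring attention. For the weak limits, note that $\W$ is separable and reflexive, so $L^\infty(0,T;\W)$ is the dual of the separable space $L^1(0,T;(\W)')$; hence \eqref{apriori-a} and the sequential Banach--Alaoglu theorem produce a subsequence and a limit $u$ with \eqref{converg:a}. Likewise \eqref{apriori-b} yields, along a further subsequence, $u_N'\to v$ weak* in $L^\infty(0,T;\L[2])$, and \eqref{apriori-c} yields, $L^2(0,T;\H[1])$ being a Hilbert space, $u_N'\to w$ weakly in $L^2(0,T;\H[1])$. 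Testing against functions of the form $\varphi(t)\psi$ with $\varphi\in C_c^\infty(0,T)$ and $\psi\in\L[2]$ shows $v=w$, and testing $u_N\to u$ and $u_N'\to v$ against $\varphi'(t)\psi$ and $\varphi(t)\psi$ identifies $v$ as the distributional time derivative $u'$ of $u$; this gives \eqref{converg:b}--\eqref{converg:c}. We now fix this subsequence (still written $\{u_N\}$) once and for all; since the limit $u$ is thereby uniquely determined, the subsequence principle will let the strong convergences below be deduced for this same subsequence.

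For \eqref{converg:d} I would invoke the form of the Aubin--Lions--Simon theorem giving relative compactness in $C([0,T];B)$: if $\{u_N\}$ is bounded in $L^\infty(0,T;B_0)$ and $\{u_N'\}$ is bounded in $L^r(0,T;B_1)$ for some $r>1$, with $B_0$ compactly embedded in $B$ and $B\into B_1$ continuously, then $\{u_N\}$ is relatively compact in $C([0,T];B)$. Take $B_0=\W$, $B=\W[1-\epsilon,p]$, $B_1=\L[2]$: for $\epsilon\geq 0$ sufficiently small, $\W$ is compactly embedded in $\W[1-\epsilon,p]$ by the Rellich--Kondrachov theorem for fractional Sobolev spaces on the $C^2$ domain $\O$, and $\W[1-\epsilon,p]\into\L[2]$ because $2<p<3$ forces the relevant Sobolev exponent to exceed $2$. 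By \eqref{apriori-c}, $\{u_N'\}$ is bounded in $L^2(0,T;\H[1])\subset L^2(0,T;\L[2])$, so the hypotheses hold with $r=2$. Thus $\{u_N\}$ is relatively compact in $C([0,T];\W[1-\epsilon,p])$; every strongly convergent sub-subsequence must converge to its weak* limit $u$, so by the subsequence principle the whole (already fixed) subsequence converges, giving \eqref{converg:d}, and this holds for each sufficiently small $\epsilon>0$ with the same subsequence.

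For \eqref{converg:e} I would apply the $L^2$-in-time version of Aubin--Lions--Simon to $\{u_N'\}$, which is bounded in $L^2(0,T;\H[1])$ by \eqref{apriori-c} and whose derivative $\{u_N''\}$ is bounded in $L^2(0,T;(\D(\A))')$ by \eqref{apriori-d}. With $B_0=\H[1]$ compactly embedded in $B=\W[1-\epsilon,2]$ (again Rellich for fractional spaces) and $B=\W[1-\epsilon,2]\into(\D(\A))'=B_1$ — the latter continuous and injective because the pairing $f\mapsto\int_\O f\,(\cdot)\,dx$ is bounded on $\W[1-\epsilon,2]\times\D(\A)$ (both spaces embed in $\L[2]$) and $C_c^\infty(\O)\subset\D(\A)$ — the theorem yields that $\{u_N'\}$ is relatively compact in $L^2(0,T;\W[1-\epsilon,2])$, hence converges to $u'$ along the subsequence, which is \eqref{converg:e}. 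The only matters needing care in the whole argument are verifying that the fractional embeddings $\W\into\W[1-\epsilon,p]$ and $\H[1]\into\W[1-\epsilon,2]$ are compact and that $\W[1-\epsilon,2]\into(\D(\A))'$, all of which hold for $\epsilon\geq 0$ small on the $C^2$ domain $\O$, together with the bookkeeping point — made above — that because the subsequence realizing \eqref{converg:a}--\eqref{converg:c} is fixed and its limit is unique, the two compactness arguments deliver \eqref{converg:d}--\eqref{converg:e} for that single subsequence and for all sufficiently small $\epsilon>0$ without any further diagonal extraction; this last point is the closest thing to an obstacle, and it is routine.
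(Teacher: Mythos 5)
Your proposal is correct and follows exactly the route the paper takes: the paper gives no written proof beyond citing the a priori bounds of Proposition~\ref{prop:apriori}, the Banach--Alaoglu theorem for \eqref{converg:a}--\eqref{converg:c}, and the Aubin--Lions--Simon compactness theorems for \eqref{converg:d}--\eqref{converg:e}. Your write-up simply supplies the details (identification of the weak limits, the choice of the embedding triples, and the subsequence-principle bookkeeping) that the authors leave implicit, and these details are all sound.
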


 By utilizing a routine density argument we also obtain convergence in the following sense:
\begin{cor}\label{cor:aeinw1p}
	On a subsequence,
	\begin{alignat}{2}
	u_N(t)&\to u(t)&\text{ weakly in }&\W\text{ for a.e. }t\in[0,T],\label{converg:f}\\
	u_N(t) &\to u(t) & \text{ weakly in }& \W[1,2]\text{ for a.e. } t\in[0,T]. \label{converg:g}
	\end{alignat}
\end{cor}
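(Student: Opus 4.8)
The plan is to derive both pointwise-in-time weak convergences from the strong convergence $u_N\to u$ in $C([0,T];\W[1-\epsilon,p])$ furnished by \eqref{converg:d}, interpolating it against the uniform bounds of Proposition~\ref{prop:apriori}. The genuine content is a density argument at the level of test functions; everything else is bookkeeping.

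First I would pin down the constants and the null set. Since each $u_N$ is a finite combination of the smooth eigenfunctions $w_j$ with $C^2$ coefficients, $t\mapsto u_N(t)$ is continuous into $\W$, so \eqref{apriori-a} gives an $N$-independent $M$ with $\|u_N(t)\|_{1,p}\le M$ for \emph{all} $t\in[0,T]$ and all $N$, while \eqref{converg:a} gives $u(t)\in\W$ with $\|u(t)\|_{1,p}\le M$ for a.e.\ $t$. Since $2<p$ and $\O$ is bounded, $\W\into\H[1]$, so the same bounds hold in the $W^{1,2}$ norm; and for small $\epsilon\ge0$ the embedding $\W[1-\epsilon,p]\into L^{3p/(3-(1-\epsilon)p)}(\O)\into\L[2]$ together with \eqref{converg:d} shows $u_N(t)\to u(t)$ strongly in $\L[2]$ for \emph{every} $t\in[0,T]$. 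I then fix, once and for all, the null set $Z$ off which $u(t)\in\W$ with $\|u(t)\|_{1,p}\le M$, and work with a fixed $t\in[0,T]\setminus Z$.

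For \eqref{converg:f}: because $\W$ is reflexive and embeds continuously, injectively, and densely into $\W[1-\epsilon,p]$, the adjoint of that inclusion identifies $(\W[1-\epsilon,p])'$ with a norm-dense subspace of $(\W)'$. Given $\phi\in(\W)'$ and $\delta>0$, I pick $\psi$ in that subspace with $\|\phi-\psi\|_{(\W)'}<\delta$ and estimate
\[
|\langle\phi,u_N(t)-u(t)\rangle_p|\le\|\phi-\psi\|_{(\W)'}\big(\|u_N(t)\|_{1,p}+\|u(t)\|_{1,p}\big)+\|\psi\|_{(\W[1-\epsilon,p])'}\,\|u_N(t)-u(t)\|_{\W[1-\epsilon,p]}.
\]
The first term is at most $2M\delta$ uniformly in $N$, and the second tends to $0$ as $N\to\infty$ by \eqref{converg:d}; sending $N\to\infty$ and then $\delta\to0$ gives $\langle\phi,u_N(t)\rangle_p\to\langle\phi,u(t)\rangle_p$, which is \eqref{converg:f}. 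For \eqref{converg:g} I would run the identical argument with $\H[1]$ in place of $\W$ and $\L[2]$ in place of $\W[1-\epsilon,p]$: $\{u_N(t)\}$ is bounded in $\H[1]$, $\L[2]$ is dense in $(\H[1])'$, and $u_N(t)\to u(t)$ strongly in $\L[2]$ by the previous paragraph.

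The only point requiring care is the density claim, namely that $(\W[1-\epsilon,p])'$ (resp.\ $\L[2]$) is norm-dense in $(\W)'$ (resp.\ $(\H[1])'$): weak-$*$ density is automatic from injectivity of the embeddings $\W\into\W[1-\epsilon,p]$ and $\H[1]\into\L[2]$, and I would upgrade it to norm density using reflexivity of $\W$ and $\H[1]$ (for reflexive duals a convex set is weakly closed iff norm closed, and the weak topology agrees with the weak-$*$ one). The remaining caveat, easily overlooked, is that the exceptional null set $Z$ must be chosen \emph{before} the test function $\phi$, so that the conclusion holds for a.e.\ $t$ simultaneously for every $\phi$; this is harmless here precisely because the bound on $u_N(t)$ holds for all $t$ and $Z$ only records where the $L^\infty$ bound on $u$ itself fails.
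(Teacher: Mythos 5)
Your argument is correct, and it is essentially a self-contained proof of the abstract fact the paper invokes (the cited Proposition~A.2 of the reference \cite{PRT-p-Laplacain}): uniform boundedness in $\W$ from \eqref{apriori-a}, strong convergence in the weaker space from \eqref{converg:d}, and norm-density of $(\W[1-\epsilon,p])'$ in $(\W)'$ (resp.\ $\L[2]$ in $(\H[1])'$) via reflexivity. Your handling of the exceptional null set and of the $\W[1,2]$ case by the same mechanism matches what the citation delivers, so the proposal agrees with the paper's route.
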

\begin{proof}
	The result follows immediately from \cite[Proposition~A.2]{PRT-p-Laplacain} as a consequence of \eqref{apriori-a} and \eqref{converg:d}.
\end{proof}
\end{subequations}

\subsection{Passage to the limit}
By integrating \eqref{approx1} on $[0,t]$ it is seen that each approximate solution $u_N$ verifies the identity 
\begin{multline}\label{newnewlim1}
(u_N'(t),w_j)_\O - (u_N'(0),w_j)_\O 
 + \int_0^t \langle -\Delta_p u_N(\tau),w_j\rangle_p\,d\tau
+ \int_0^t \langle -\Delta_2 u_N'(\tau),w_j\rangle_2\,d\tau\\
= \int_0^t \int_\O f(u_N(\tau))w_j\,dxd\tau 
+ \int_0^t \int_\G h(\t u_N(\tau))\t w_j\,dSd\tau.
\end{multline}
for $j=1,\ldots,N$. As a first step in demonstrating that the limit function $u$ indeed verifies the variational identity \eqref{slnid} we shall first carefully pass to the limit as $N\to\infty$ in \eqref{newnewlim1}.  For most of the terms except for the $p$-Laplacian this will be routine, and this convergence is addressed in the following propositions.
\begin{prop}\label{prop:limf}
	\begin{subequations}\begin{alignat}{3}
	f(u_N)&\to f(u)&\text{ strongly in }&L^\infty(0,T;\L[6/5]),\label{prop:limf-a}\\
	h(\t u_N)&\to h(\t u)&\text{ strongly in }&L^\infty(0,T;\Lb[4/3]).\label{prop:limf-b}
	\end{alignat}\end{subequations}
\end{prop}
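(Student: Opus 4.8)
The plan is to obtain Proposition~\ref{prop:limf} directly from the uniform strong convergence \eqref{converg:d} together with the local Lipschitz continuity recorded in Lemma~\ref{lem:f-lipschitz}. The only point that needs a little care is that a \emph{single} Lipschitz constant must serve for all $N$ simultaneously, which is ensured once we know that every $u_N(t)$ and the limit $u(t)$ lie in one fixed ball of $\W[1-\epsilon,p]$.

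First I would fix $\epsilon\geq 0$ small enough that both Lemma~\ref{lem:f-lipschitz} applies and \eqref{converg:d} holds. From the Gronwall estimate in the proof of Proposition~\ref{prop:apriori} one has $\norm{u_N(t)}_{1,p}^p \leq p\,\E_N(t)\leq pC(T)$; since each $\E_N$ is a $C^2$ function of $t$, this bound holds for \emph{every} $t\in[0,T]$ and every $N$, not merely almost everywhere. Composing with the continuous embedding $\W\into\W[1-\epsilon,p]$ produces a constant $R$, independent of $N$ and $t$, with $\norm{u_N(t)}_{\W[1-\epsilon,p]}\leq R$; passing to the limit via \eqref{converg:d} gives the same bound for $u$. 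Thus $u_N(t)$ and $u(t)$ remain in the ball $B_R$ of $\W[1-\epsilon,p]$ for all $t\in[0,T]$, and by Lemma~\ref{lem:f-lipschitz} the map $f:\W[1-\epsilon,p]\to\L[6/5]$ is Lipschitz on $B_R$ with some constant $L_R<\infty$ (and likewise $h\circ\t:\W[1-\epsilon,p]\to\Lb[4/3]$, with some constant $L_R'$).

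It then remains to combine these facts. For each $t\in[0,T]$,
\[
\norm{f(u_N(t))-f(u(t))}_{6/5}\leq L_R\,\norm{u_N(t)-u(t)}_{\W[1-\epsilon,p]},
\]
so taking the supremum over $t$,
\[
\norm{f(u_N)-f(u)}_{L^\infty(0,T;\L[6/5])}\leq L_R\,\norm{u_N-u}_{C([0,T];\W[1-\epsilon,p])},
\]
and the right-hand side tends to $0$ as $N\to\infty$ by \eqref{converg:d}, which establishes \eqref{prop:limf-a}. The boundary assertion \eqref{prop:limf-b} follows by the verbatim argument, using instead the Lipschitz constant $L_R'$ of $h\circ\t$ on $B_R$ and the same convergence \eqref{converg:d}.

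I do not expect a genuine obstacle here; the statement is essentially a corollary of \eqref{converg:d} and Lemma~\ref{lem:f-lipschitz}. The only thing to be careful about is the order of quantifiers: the radius $R$ and the Lipschitz constants $L_R, L_R'$ must be chosen before letting $N\to\infty$, and this is legitimate precisely because the energy bound is uniform in $N$ and, thanks to the $C^2$ regularity of $\E_N$, holds at every $t\in[0,T]$ rather than only almost everywhere.
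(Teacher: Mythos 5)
Your proof is correct and follows essentially the same route as the paper's: a uniform-in-$N$ bound places all $u_N(t)$ and $u(t)$ in a fixed ball of $\W[1-\epsilon,p]$, so a single local Lipschitz constant from Lemma~\ref{lem:f-lipschitz} applies, and \eqref{converg:d} then gives the uniform convergence. Your extra care about the quantifiers (fixing $R$ and $L_R$ before sending $N\to\infty$, and noting the embedding $\W\into\W[1-\epsilon,p]$) only makes explicit what the paper leaves implicit.
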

\begin{proof}
	From the convergence in \eqref{converg:d} we find that $$||u_N(t)||_{1,p},||u(t)||_{1,p} \leq R, \,\,\, t\in[0,T]$$ for a constant $R>0$ independent of $N$. Thus, from Lemma~\ref{lem:f-lipschitz} and \eqref{converg:d} we obtain 
	\begin{align*}
	||f(u_N(t))-f(u(t))||_{6/5} \leq C_R||u_N(t)-u(t)||_{1-\epsilon,p}\to 0, \,\,\,\, t\in[0,T],\\ 
	|h(\t u_N(t)) - h(\t u(t))|_{4/3}\leq C_R||u_N(t)-u(t)||_{1-\epsilon,p}\to 0,  \,\,\,\, t\in[0,T],
	\end{align*}
completing the proof. 	
\end{proof}

\begin{prop}\label{prop:ut}With $\{u_N\}$ and $u$ as in Corollary~\ref{cor:converg},
	$$-\Delta_2 u_N' \to -\Delta_2 u'\text{ weakly in }L^2(0,T;(\W[1,2])').$$
\end{prop}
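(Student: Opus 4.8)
The plan is to treat this as a soft consequence of the fact that a bounded linear operator is continuous from the weak topology of its domain to the weak topology of its codomain. First I would record that $-\Delta_2$, defined by \eqref{plapalce} with the formal choice $p=2$, is a bounded linear map from $\W[1,2]$ into its dual; indeed the $p=2$ analogue of the operator-norm bound \eqref{plaplace-opnorm} gives $\|-\Delta_2 w\|_{(\W[1,2])'}\le 2\|w\|_{1,2}$ for $w\in\W[1,2]$, which follows at once from the Cauchy--Schwarz inequality and the continuity of the trace $\t:\W[1,2]\to\Lb[2]$. Consequently $v\mapsto -\Delta_2 v(\cdot)$ is a bounded linear operator $T:L^2(0,T;\W[1,2])\to L^2(0,T;(\W[1,2])')$ with $\|Tv\|_{L^2(0,T;(\W[1,2])')}\le 2\|v\|_{L^2(0,T;\W[1,2])}$.

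Next I would invoke the weak convergence \eqref{converg:c} from Corollary~\ref{cor:converg}, which says precisely that $u_N'\to u'$ weakly in $L^2(0,T;\H[1])=L^2(0,T;\W[1,2])$. Since bounded linear operators are weak-to-weak continuous, applying $T$ yields $-\Delta_2 u_N'=Tu_N'\to Tu'=-\Delta_2 u'$ weakly in $L^2(0,T;(\W[1,2])')$, which is the assertion.

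If one wishes to avoid quoting the abstract principle, the same argument can be unwound directly. Because $\W[1,2]$ is a separable Hilbert space, hence reflexive, the dual of $L^2(0,T;(\W[1,2])')$ is identified with $L^2(0,T;\W[1,2])$, so it suffices to check that for every $\Phi\in L^2(0,T;\W[1,2])$ one has
\[
\int_0^T\langle -\Delta_2 u_N'(\tau),\Phi(\tau)\rangle_2\,d\tau\;\longrightarrow\;\int_0^T\langle -\Delta_2 u'(\tau),\Phi(\tau)\rangle_2\,d\tau .
\]
Expanding the pairing via \eqref{plapalce}, the left-hand side equals $\int_0^T\big[(\grad u_N'(\tau),\grad\Phi(\tau))_\O+(\t u_N'(\tau),\t\Phi(\tau))_\Gamma\big]\,d\tau$, and $v\mapsto\int_0^T\big[(\grad v(\tau),\grad\Phi(\tau))_\O+(\t v(\tau),\t\Phi(\tau))_\Gamma\big]\,d\tau$ is a bounded linear functional on $L^2(0,T;\W[1,2])$ (bounded by $2\|\Phi\|_{L^2(0,T;\W[1,2])}$, using Cauchy--Schwarz in $x$ and $t$ together with the trace inequality for the boundary term); the convergence is then immediate from \eqref{converg:c}.

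I do not expect any genuine obstacle here, since the statement is purely linear in $u_N'$ and all the relevant bounds are elementary. The only steps that require a moment's care are the handling of the boundary term --- one must use the trace theorem $\W[1,2]\into\Lb[2]$ to see that $v\mapsto\int_0^T(\t v(\tau),\t\Phi(\tau))_\Gamma\,d\tau$ is continuous on $L^2(0,T;\W[1,2])$ --- and the identification of the dual of $L^2(0,T;(\W[1,2])')$, which rests on the reflexivity and separability of $\W[1,2]$.
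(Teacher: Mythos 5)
Your proof is correct, and it takes a slightly different (and in fact more economical) route than the paper's. The paper also tests $-\Delta_2 u_N'$ against an arbitrary $\phi\in L^2(0,T;\W[1,2])$ and splits the pairing into the interior gradient term and the boundary term, but it then treats the two pieces asymmetrically: the interior term is handled exactly as you do, via the weak convergence \eqref{converg:c}, while the boundary term is handled by invoking the \emph{strong} convergence $u_N'\to u'$ in $L^2(0,T;\W[1-\epsilon,2])$ from \eqref{converg:e} together with the continuity of the trace $\W[1-\epsilon,2]\tinto\Lb[2]$, i.e.\ by a strong--weak pairing argument that rests on the Aubin--Lions compactness. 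You instead observe that the whole pairing $v\mapsto\int_0^T\langle-\Delta_2 v,\Phi\rangle_2\,d\tau$ is a bounded linear functional on $L^2(0,T;\W[1,2])$ (the $p=2$ case of \eqref{plaplace-opnorm} plus the trace inequality), so that the single weak convergence \eqref{converg:c} suffices for both terms; equivalently, $-\Delta_2$ induces a bounded linear, hence weak-to-weak continuous, operator between the relevant Bochner spaces. Your version buys independence from the compactness assertion \eqref{converg:e} and isolates the purely linear nature of the statement; the paper's version uses more machinery than necessary here but keeps the boundary term's treatment parallel to how traces are handled elsewhere in the limit passage. Your remark that identifying the dual of $L^2(0,T;(\W[1,2])')$ with $L^2(0,T;\W[1,2])$ requires reflexivity and separability of $\W[1,2]$ is the right point of care, and it is satisfied since $\W[1,2]$ is a separable Hilbert space.
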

\begin{proof}
For $\phi\in L^2(0,T;\W[1,2])$ we have 
\begin{align*}
\int_0^T \langle -\Delta_2 u_N'(\tau),\phi(\tau)\rangle_2\,d\tau = 
\underbrace{\int_0^T \int_\O \grad u_N'(\tau)\cdot\grad\phi(\tau)\,dxd\tau}_\text{(i)} + 
\underbrace{\int_0^T \int_\G \t u_N'(\tau)\t \phi(\tau)\,dSd\tau}_\text{(ii)}.	
\end{align*}
For $\text{(i)}$, we have $\grad u_N' \to \grad u'$ weakly in $L^2(0,T;L^2(\O))$ from \eqref{converg:c} with $\grad\phi\in L^2(0,T;L^2(\O))$; and for $\text{(ii)}$, we use the fact that $\t u_N' \to \t u'$ strongly in $L^2(0,T;\Lb[2])$ from \eqref{converg:e} along with the continuity of the map $\W[1-\epsilon,2]\tinto \Lb[2]$ for sufficiently small $\epsilon>0$.  Thus,
\begin{align*}
\int_0^T \langle -\Delta_2 u_N'(\tau),\phi(\tau)\rangle_2 \,d\tau &\to  
\int_0^T \int_\O \grad u'(\tau)\cdot\grad\phi(\tau)\,dxd\tau
+ \int_0^T \int_\G \t u'(\tau)\t \phi(\tau)\,dSd\tau\\
&= \int_0^T \langle -\Delta_2 u'(\tau),\phi(\tau)\rangle_2\,d\tau,
\end{align*}
completing the proof. 
\end{proof}

We are now in a position to address the far more delicate matter of the convergence of the term arising from the $p$-Laplacian.  The nonlinearity of this operator is one of the primary challenges in this process, and the argument via monotone operator theory is necessarily detailed.

\begin{prop}\label{prop:limlaplace}
On a subsequence, the sequence $\{u_N\}$ and the limit function $u$ from Corollary~\ref{cor:converg} satisfy 
\begin{align*}
-\Delta_p u_N \to -\Delta_p u\text{ weak* in }L^\infty(0,T;(\W)').
\end{align*}
\end{prop}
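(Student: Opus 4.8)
The plan is to identify the weak$^{*}$ limit $\chi$ of $\{-\Delta_p u_N\}$ with $-\Delta_p u$ by the Minty--Browder monotonicity device; the real work is reconciling the energy identity satisfied by the approximations with the one satisfied by $u$.

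\smallskip
\noindent\emph{Extraction and the limit identity.} By \eqref{plaplace-opnorm} and \eqref{apriori-a}, $\{-\Delta_p u_N\}$ is bounded in $L^\infty(0,T;(\W)')$, and since $\W$ is separable and reflexive this space is the dual of $L^1(0,T;\W)$; Banach--Alaoglu thus gives a subsequence and a $\chi\in L^\infty(0,T;(\W)')$ with $-\Delta_p u_N\to\chi$ weak$^{*}$. Passing to the limit in \eqref{newnewlim1} using this convergence together with \eqref{2.4}, Propositions~\ref{prop:limf} and \ref{prop:ut}, and the strong convergence $u_N'\to u'$ in $C([0,T];(\D(\A^s))')$, $0<s<1$ (Aubin--Lions--Simon applied to \eqref{apriori-b}, \eqref{apriori-d}, using that $\L[2]$ embeds compactly into $(\D(\A^s))'$ and continuously into $(\D(\A))'$; cf.\ \cite[Thm.~II.5.16]{Boyer2013}), and then using the density of $\spn\{w_j\}$ in $\W$, one obtains, for all $w\in\W$ and $t\in[0,T]$,
\begin{multline*}
(u'(t),w)_\Omega - (u_1,w)_\Omega + \int_0^t\langle\chi(\tau),w\rangle_p\,d\tau + \int_0^t\langle -\Delta_2 u'(\tau),w\rangle_2\,d\tau \\
 = \int_0^t(f(u(\tau)),w)_\Omega\,d\tau + \int_0^t(h(\t u(\tau)),\t w)_\Gamma\,d\tau .
\end{multline*}
Regarded as an identity in $(\W)'$, this shows that $u'$ agrees a.e.\ with an absolutely continuous $(\W)'$-valued function whose a.e.\ derivative $u''(\tau)\in(\W)'$ acts by $\langle u''(\tau),w\rangle_p=-\langle\chi(\tau),w\rangle_p-\langle -\Delta_2 u'(\tau),w\rangle_2+(f(u(\tau)),w)_\Omega+(h(\t u(\tau)),\t w)_\Gamma$; since $\W\into\L[6]$, $\W\into\H[1]$ and the trace $\W\tinto\Lb[4]$ is bounded, each summand lies in $L^2(0,T;(\W)')$, whence $u''\in L^2(0,T;(\W)')$.

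\smallskip
\noindent\emph{Matching the energy identities (the main obstacle).} Granting the Minty step below, it suffices to prove
\[
\limsup_{N\to\infty}\int_0^T\langle -\Delta_p u_N(\tau),u_N(\tau)\rangle_p\,d\tau\ \le\ \int_0^T\langle\chi(\tau),u(\tau)\rangle_p\,d\tau .
\]
For the left side, multiplying \eqref{approx1} by $u_{N,j}$, summing over $j=1,\dots,N$, integrating on $[0,T]$, integrating the inertial term by parts in time, and using $\langle -\Delta_2 u_N',u_N\rangle_2=\tfrac12\tfrac{d}{dt}\|u_N\|_{1,2}^2$ gives
\begin{multline*}
\int_0^T\langle -\Delta_p u_N,u_N\rangle_p\,d\tau = -(u_N'(T),u_N(T))_\Omega + (u_N'(0),u_N(0))_\Omega + \int_0^T\|u_N'\|_2^2\,d\tau \\
{}-\tfrac12\|u_N(T)\|_{1,2}^2 + \tfrac12\|u_N(0)\|_{1,2}^2 + \int_0^T(f(u_N),u_N)_\Omega\,d\tau + \int_0^T(h(\t u_N),\t u_N)_\Gamma\,d\tau .
\end{multline*}
For the right side, $(\W,\L[2],(\W)')$ is a Gelfand triple and, by the previous paragraph, $u\in L^2(0,T;\W)$, $u'\in L^2(0,T;\L[2])$, $u''\in L^2(0,T;(\W)')$, so the standard integration-by-parts formula for vector-valued functions yields $\int_0^T\langle u'',u\rangle_p\,d\tau=(u'(T),u(T))_\Omega-(u_1,u_0)_\Omega-\int_0^T\|u'\|_2^2\,d\tau$ and $\int_0^T\langle -\Delta_2 u',u\rangle_2\,d\tau=\tfrac12\|u(T)\|_{1,2}^2-\tfrac12\|u_0\|_{1,2}^2$ (using $u(0)=u_0$, $u'(0)=u_1$, which follow from \eqref{2.4} and Corollary~\ref{cor:converg}); substituting these into $\langle\chi,u\rangle_p=-\langle u'',u\rangle_p-\langle -\Delta_2 u',u\rangle_2+(f(u),u)_\Omega+(h(\t u),\t u)_\Gamma$, integrated in $\tau$, shows that $\int_0^T\langle\chi,u\rangle_p\,d\tau$ equals \emph{precisely} the same expression as the right side of the $u_N$-identity with $u_N,u_N'$ replaced by $u,u'$ throughout. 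Now pass to the limit term by term: the $\tau=0$ terms converge by \eqref{2.4} and $\W\into\H[1]$; $\int_0^T\|u_N'\|_2^2\,d\tau\to\int_0^T\|u'\|_2^2\,d\tau$ since $u_N'\to u'$ strongly in $L^2(0,T;\L[2])$ (from \eqref{converg:e} and $\W[1-\e,2]\into\L[2]$); the source integrals converge by Proposition~\ref{prop:limf} together with $u_N\to u$ in $C([0,T];\W[1-\e,p])\into L^\infty(0,T;\L[6])$ and the analogous trace convergence into $L^\infty(0,T;\Lb[4])$ (H\"older pairs $6/5,6$ and $4/3,4$). The two delicate terms are those at $\tau=T$. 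For $(u_N'(T),u_N(T))_\Omega$: Aubin--Lions--Simon gives $u_N'(T)\to u'(T)$ in $(\D(\A^s))'$, which together with the uniform bound $\sup_N\|u_N'(T)\|_2<\infty$ (from \eqref{apriori-b}) forces $u_N'(T)\rightharpoonup u'(T)$ weakly in $\L[2]$, and since $u_N(T)\to u(T)$ strongly in $\L[2]$ by \eqref{converg:d}, the product converges. The term $\tfrac12\|u_N(T)\|_{1,2}^2$ cannot be passed to the limit, but it enters the $u_N$-identity with a \emph{minus} sign, so it suffices to bound $\|u_N(T)\|_{1,2}^2$ from below: from $u_N(T)=u_N(0)+\int_0^T u_N'(\tau)\,d\tau$, $u_N(0)\to u_0$ in $\H[1]$, and $u_N'\rightharpoonup u'$ weakly in $L^2(0,T;\H[1])$ (see \eqref{converg:c}), we get $u_N(T)\rightharpoonup u(T)$ weakly in $\H[1]$, whence weak lower semicontinuity of $\|\cdot\|_{1,2}$ gives $\limsup_N(-\tfrac12\|u_N(T)\|_{1,2}^2)\le-\tfrac12\|u(T)\|_{1,2}^2$. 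Collecting everything yields the displayed inequality. Establishing this matching --- especially taming the $\tau=T$ boundary terms, which requires the extra compactness on $u_N'$ and the sign bookkeeping that turns lower semicontinuity to our advantage --- is the crux of the argument.

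\smallskip
\noindent\emph{Minty's trick.} Fix $v\in L^\infty(0,T;\W)$. Monotonicity of $-\Delta_p:\W\to(\W)'$ gives $0\le\int_0^T\langle -\Delta_p u_N-(-\Delta_p v),u_N-v\rangle_p\,d\tau$; letting $N\to\infty$, the first resulting term has $\limsup\le\int_0^T\langle\chi,u\rangle_p\,d\tau$ by the previous paragraph, $\int_0^T\langle -\Delta_p u_N,v\rangle_p\,d\tau\to\int_0^T\langle\chi,v\rangle_p\,d\tau$ by weak$^{*}$ convergence, and $\int_0^T\langle -\Delta_p v,u_N-v\rangle_p\,d\tau\to\int_0^T\langle -\Delta_p v,u-v\rangle_p\,d\tau$ by \eqref{converg:a} (with $-\Delta_p v\in L^1(0,T;(\W)')$ by \eqref{plaplace-opnorm}), so $0\le\int_0^T\langle\chi-(-\Delta_p v),u-v\rangle_p\,d\tau$. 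Taking $v=u-\l w$ with $\l>0$ and $w\in L^\infty(0,T;\W)$, dividing by $\l$, and letting $\l\to0^{+}$ --- justified by the hemicontinuity of $-\Delta_p$ and the domination $\|-\Delta_p(u-\l w)\|_{(\W)'}\le C(\|u\|_{1,p}^{p-1}+\|w\|_{1,p}^{p-1})\in L^1(0,T)$ --- gives $0\le\int_0^T\langle\chi-(-\Delta_p u),w\rangle_p\,d\tau$, and applying this to $-w$ as well gives equality for all $w\in L^\infty(0,T;\W)$, which is dense in $L^1(0,T;\W)$. Hence $\chi=-\Delta_p u$ in $L^\infty(0,T;(\W)')$, which is the assertion.
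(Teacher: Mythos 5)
Your proof is correct and follows the same overall skeleton as the paper's: extract a weak* limit $\chi$ of $\{-\Delta_p u_N\}$, reduce the identification $\chi=-\Delta_p u$ to the inequality $\limsup_N\int_0^T\langle -\Delta_p u_N,u_N\rangle_p\,d\tau\le\int_0^T\langle\chi,u\rangle_p\,d\tau$, obtain the left side from the Galerkin energy identity, and close with monotonicity. The differences are in how you execute two key steps, and both are genuine. First, to evaluate $\int_0^T\langle\chi,u\rangle_p\,d\tau$ the paper exploits the separable structure: it tests the limit identity against $\phi\,w_j$, substitutes $\phi=u_{N,j}$, sums, and passes to the limit a second time; you instead derive $u''\in L^2(0,T;(\W)')$ directly from the $\chi$-identity and invoke the Lions--Magenes integration-by-parts formulas in the Gelfand triple $(\W,\L[2],(\W)')$. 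Your route is shorter and avoids the double limit, at the cost of having to justify the abstract product rules (which the paper does anyway, later, via \eqref{prodrule}); the paper's route stays entirely at the Galerkin level. Second, the paper handles the boundary terms $(u_N'(t),u_N(t))_\Omega$ and $-\tfrac12\|u_N(t)\|_{1,2}^2$ only for a.e.\ $t$ via pointwise strong convergence on subsequences, which leaves the limsup inequality established a.e.\ rather than at $t=T$ where the monotonicity argument actually needs it; you work at $t=T$ exactly, using the extra Aubin--Lions--Simon compactness $u_N'\to u'$ in $C([0,T];(\D(\A^s))')$ to get $u_N'(T)\rightharpoonup u'(T)$ weakly in $\L[2]$, and $u_N(T)\rightharpoonup u(T)$ weakly in $\H[1]$ from the integral representation, so weak lower semicontinuity lands on the correct sign. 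This is a genuine tightening of the paper's argument. Finally, you carry out Minty's trick by hand (monotonicity, the perturbation $v=u-\lambda w$, hemicontinuity with an integrable dominating bound) where the paper cites \cite[Cor.~2.4]{Barbu2010}; the content is the same, but your version is self-contained. No gaps.
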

\begin{proof}
	Throughout, set $X=L^p(0,T;\W)$. By utilizing the operator norm bound on $-\Delta_p$ from (\ref{plaplace-opnorm}) we see that 
	\begin{align*}
	||-\Delta_p u_N||_{L^\infty(0,T;(\W)')} &= \esssup_{\tau\in[0,T]} ||-\Delta_p u_N (\tau)||_{(\W)'}\\
	&\leq 2\esssup_{\tau\in[0,T]}||u_N(\tau)||_{1,p}^{p-1}\leq C
	\end{align*}
	for a constant $C$ independent of $N$ by virtue of \eqref{converg:a}, whereupon the sequence $\{-\Delta_p u_N\}$ is seen to be bounded in $L^\infty(0,T;(\W)')$. As such, there exists by the Banach-Alaoglu theorem some $\eta\in L^\infty(0,T;(\W)')$ and a subsequence of $\{u_N\}$ so that 
	\begin{align}\label{limlaplace-1}
	-\Delta_p u_N \to \eta\text{ weak* in }L^\infty(0,T;(\W)').
	\end{align}
	By viewing $L^\infty(0,T;(\W)')$ as a subspace of $X'$ the desired conclusion follows immediately by demonstrating that $-\Delta_p u_N \to -\Delta_p u$ weakly in $X'$ in keeping in line with a standard result from analysis  (e.g., \cite[Prop II.2.10]{Boyer2013}).  Towards these ends, the operator $-\Delta_p$ extends to a maximal monotone operator from $X$ into $X'$ as   
	\begin{align*}
	\langle -\Delta_p u, \phi\rangle_{X',X}=\int_0^T\langle -\Delta_p u(\tau),\phi(\tau)\rangle_p \,d\tau;\quad u,\phi\in X,
	\end{align*}
	with an elementary proof provided in  \cite[Lem. 5.1]{MOHNICK1}.
	
	In order to conclude that $\eta=-\Delta_p u$ in $X'$ we appeal to a standard result from monotone operator theory (see \cite[Cor. 2.4]{Barbu2010}, for instance) and demonstrate that 
	\begin{align}\label{limlaplace-wts}
	\limsup_{N\to\infty}\langle -\Delta_p u_N,u_N\rangle_{X',X} \leq \langle  \eta, u\rangle_{X',X}.
	\end{align}
	
	Multiplying equation \eqref{approx1} by $u_{N,j}$ and summing over $j=1,\ldots,N$ we obtain the relation 
	\begin{multline}\label{limlaplace-s1}
	(u_N'',u_N)_\Omega + \langle -\Delta_p u_N,u_N\rangle_p + \overbrace{(\grad u_N',\grad u_N)_\Omega + (\t u_N',\t u_N)_\Gamma}^{\langle -\Delta_2 u_N',u_N\rangle_2} \\ \vspace{.1in} = (f(u_N),u_N)_\O + (h(\t u_N),\t u_N)_\Gamma 
	\end{multline}
	using the same summation relations as were demonstrated at the onset of the proof of Proposition~\ref{prop:apriori}. Rearranging \eqref{limlaplace-s1} and integrating over $[0,t]$ we thus obtain 
	\begin{multline}\label{limlaplace-s1p}
	\int_0^t \langle -\Delta_p u_N,u_N\rangle_p \,d\tau = -\int_0^t (u_N'',u_N)_\Omega\,d\tau -\int_0^t (\grad u_N',\grad u_N)_\Omega\,d\tau \\
	-\int_0^t (\t u_N',\t u_N)_\Gamma \,d\tau +\int_0^t \int_\O f(u_N)u_N \,dx \,d\tau + \int_0^t \int_\G h(\t u_N)\t u_N\,dSd\tau. \ptag{limlaplace-s1}
	\end{multline}
	Thus, upon integrating by parts and making the identification 
	$$(\grad u_N',\grad u_N)_\O + (\t u_N'.\t u_N)_\G =\frac{1}{2}\frac{d}{d t}||u_N||_{1,2}^2
	$$
	we may write 
	\begin{multline}\label{limlaplace-s1pp}
	\int_0^t \langle -\Delta_p u_N,u_N\rangle_p \,d\tau = 
	\underbrace{(u_N'(0),u_N(0))_\Omega - (u_N'(t),u_N(t))_\Omega}_\text{(i)}  \\
	+ \underbrace{\int_0^t ||u_N'(\tau)||_2^2\,d\tau}_\text{(ii)} 
	+ \underbrace{\frac{1}{2}||u_N(0)||_{1,2}^2}_\text{(iii)}
	-\underbrace{\frac{1}{2}||u_N(t)||_{1,2}^2}_\text{(iv)}\\
	+\underbrace{\int_0^t \int_\O f(u_N)u_N \,dx\,d\tau 
	+ \int_0^t \int_\G h(\t u_N)\t u_N\,dSd\tau}_\text{(v)}. \ptag{limlaplace-s1p}
	\end{multline}
	The convergence of these terms warrants special attention:
	\begin{enumerate}[(i)]
		\setlength{\itemsep}{5pt}
		\item From (\ref{2.4}),  $(u_N'(0),u_N(0))_\Omega\to(u_1,u_0)_\Omega$. Using \eqref{converg:d} in Corollary~\ref{cor:converg} we obtain $||u_N-u||_2\to 0$ in $L^2(0,T)$, and hence on a subsequence $u_N(t)\to u(t)$ strongly in $\L[2]$ for a.e. $t\in[0,T]$. Similarly, from \eqref{converg:e} we find $u_N'(t)\to u'(t)$ strongly in $\L[2]$ for a.e. $t\in[0,T]$ on a subsequence. Thus, 
		$$(u_N'(t),u_N(t))_\O \to (u'(t),u(t))_\O\text{ a.e. }t\in[0,T]$$
		on a common subsequence of $\{u_N\}$ and $\{u_N'\}$.		
		
		\item Since $u_N'\to u'$ strongly in $L^2(0,T;\L[2])$ from \eqref{converg:e} in Corollary~\ref{cor:converg}, it follows that  
		\[ \int_0^t ||u_N'(\tau)||_2^2\,d\tau \to \int_0^t ||u'(\tau)||_2^2\,d\tau.
		\]

		\item From the convergence in (\ref{2.4}) and the embedding $\W\into\W[1,2]$ we obtain $$ \frac{1}{2}||u_N(0)||_{1,2}^2 \to \frac{1}{2}||u(0)||_{1,2}^2.$$
		
		\item From Corollary~\ref{cor:aeinw1p} it follows that $u_N(t)\to u(t)$ weakly in $\W[1,2]$ for a.e. $t\in[0,T]$.  Using the weak lower-semicontinuity of norms we obtain  
		$$ \limsup_{N\to\infty} -\frac{1}{2}||u_N(t)||_{1,2}^2 = 
		-\frac{1}{2}\liminf_{N\to\infty} ||u_N(t)||_{1,2}^2 \leq -\frac{1}{2} ||u(t)||_{1,2}^2\quad \text{a.e. }[0,T].$$
		 
		 \item Since $f(u_N)\to f(u)$ strongly in $L^\infty(0,T;\L[6/5])$ from Proposition~\ref{prop:limf} and $u_N\to u$ strongly in $C([0,T];\L[6])$ from \eqref{converg:d} in Corollary~\ref{cor:converg} and the Sobolev embedding $\W[1-\epsilon,p]\into \L[6]$, 
		 \[
		 \int_0^t \int_\O f(u_N)u_N\,dxd\tau \to \int_0^t\int_\O f(u)u\,dxd\tau
		 \]
		 for $t\in[0,T]$.  Similarly, since $h(\t u_N)\to h(\t u)$ strongly in $L^\infty(0,T;\Lb[4/3])$ and $\t u_N \to \t u$ strongly in $C([0,T];\Lb[4])$ from \eqref{converg:d} and the trace $\W\tinto \Lb[4]$, 
		 \[
		 \int_0^t \int_\G h(\t u_N)\t u_N\,dSd\tau \to \int_0^t \int_\G h(\t u)\t u\,dSd\tau
		 \]
		 for $t\in[0,T]$.
	\end{enumerate}
	We may thus take the limit superior as $N\to\infty$ in \eqref{limlaplace-s1pp} to obtain 
	\begin{multline}\label{limlaplace-s2}
	\limsup_{N\to\infty} \int_0^t  \langle -\Delta_p u_N,u_N\rangle_p\,d\tau 
	\leq (u'(0),u(0))_\Omega - (u'(t),u(t))_\Omega  \\ 
	+ \int_0^t ||u'(\tau)||_2^2\,d\tau 
	+ \frac{1}{2}||u(0)||_{1,2}^2 
	- \frac{1}{2}||u(t)||_{1,2}^2 \\
	+\int_0^t \int_\O f(u)u\,dxd\tau  
	+ \int_0^t \int_\G h(\t u)\t u \,dS \,d\tau \quad \text{ a.e. }[0,T].
	\end{multline}	
	In order to express the right hand side of \eqref{limlaplace-s2} in terms of $\eta$ we utilize the separable nature of the approximate solutions to effect a limit of \eqref{limlaplace-s1pp} through a different means. Towards these ends, multiplying \eqref{approx1} by any $\phi\in C^1([0,T])$ and integrating on $[0,t]$ yields 
	\begin{multline}\label{limlaplace-s3}
	\int_0^t \langle -\Delta_p u_N,\phi w_j\rangle_p \,d\tau = 
	(u_N'(0),\phi(0)w_j)_\Omega - (u_N'(t),\phi(t)w_j)_\Omega \\ 
	+ \int_0^t (u_N',\phi'w_j)_\Omega\,d\tau 
	-\int_0^t (\grad u_N',\phi\grad w_j)_\Omega\,d\tau  
	-\int_0^t (\t u_N',\phi \t w_j)_\Gamma \,d\tau \\
	+\int_0^t \int_\O f(u_N)\phi w_j\, dx \,d\tau + \int_0^t \int_\G h(\t u_N)\phi\t w_j\,dSd\tau.
	\end{multline}
	Taking the limit as $N\to\infty$ in \eqref{limlaplace-s3} is readily justified in each of the terms from the weak convergence given in Corollary~\ref{cor:converg}.  It is perhaps worthy of note however that 
	\begin{align*}\int_0^t (\grad u_N',\phi\grad w_j)_\O\,d\tau + \int_0^t (\t u_N',\phi\t w_j)_\G\,d\tau &=  \int_0^t \langle -\Delta_2 u_N',\phi w_j\rangle_2\,d\tau \\
	 &\to \int_0^t \langle -\Delta_2 u',\phi w_j\rangle_p\,d\tau\end{align*}
	from Proposition~\ref{prop:ut} since $\phi w_j\in L^2(0,T;\W[1,2])$.	
	Thus,
	\begin{multline}\label{limlaplace-s4}
	\int_0^t \langle \eta,\phi w_j\rangle_p \,d\tau = 
	(u'(0),\phi(0)w_j)_\Omega - (u'(t),\phi(t)w_j)_\Omega \\ 
	+ \int_0^t (u',\phi'w_j)_\Omega\,d\tau 
	-\int_0^t (\grad u',\phi\grad w_j)_\Omega\,d\tau  
	-\int_0^t (\t u',\phi \t w_j)_\Gamma \,d\tau \\
	+\int_0^t \int_\O f(u)\phi w_j\, dx \,d\tau + \int_0^t \int_\G h(\t u)\phi\t w_j\,dSd\tau.
	\end{multline}
	The identification
	\begin{align*}
	\lim_{N\to\infty}\int_0^t \langle -\Delta_p u_N,\phi w_j\rangle_p \,d\tau = \int_0^t \langle \eta , \phi w_j\rangle_p \,d\tau
	\end{align*}
	in this limit is possible since $\-\Delta_p u_N\to\eta$ in $X'$ and $\phi w_j\in X$ Now, replacing $\phi(t)$ with $u_{N,j}(t)$ in \eqref{limlaplace-s4}  and summing over $j=1,\ldots,N$ we obtain  
	\begin{multline}\label{limlaplace-s5}
	\int_0^t \langle \eta,u_N\rangle_p \,d\tau = 
	(u'(0),u_N(0))_\Omega - (u'(t),u_N(t))_\Omega \\ 
	+ \int_0^t (u',u_N')_\Omega\,d\tau 
	-\int_0^t (\grad u',\grad u_N)_\Omega\,d\tau  
	-\int_0^t (\t u',\t u_N)_\Gamma \,d\tau \\
	+\int_0^t \int_\O f(u)u_N\, dx \,d\tau + \int_0^t \int_\G h(\t u)\t u_N\,dSd\tau.	\end{multline}
	Taking the limit in \eqref{limlaplace-s5} as $N\to\infty$ we obtain
	\begin{multline}\label{limlaplace-s6}
	\int_0^t \langle \eta,u_N\rangle_p \,d\tau = 
	(u'(0),u(0))_\Omega - (u'(t),u(t))_\Omega \\ 
	+ \int_0^t (u',u')_\Omega\,d\tau 
	-\int_0^t (\grad u',\grad u)_\Omega\,d\tau  
	-\int_0^t (\t u',\t u)_\Gamma \,d\tau \\
	+\int_0^t \int_\O f(u)u\, dx \,d\tau + \int_0^t \int_\G h(\t u)\t u\,dSd\tau,	\end{multline}
	whose right hand side is identical to \eqref{limlaplace-s2} after identifying
	\[\int_0^t (\grad u',\grad u)_\O\,d\tau + \int_0^t (\t u',\t u)_\G\,d\tau  = \frac{1}{2}||u(t)||_{1,2}^2 - \frac{1}{2}||u(0)||_{1,2}^2
	\]
	with the aid of \cite[Proposition II.5.11]{Boyer2013}, for instance. That is, we have shown
	\begin{align*}
	\limsup_{N\to\infty}\int_0^t \langle -\Delta_p u_N, u_N\rangle_p\,d\tau \leq \int_0^t \langle \eta,u\rangle_p\,d\tau\text{ a.e. }[0,T].
	\end{align*}
	Hence,  \eqref{limlaplace-wts} is indeed valid and we have $-\Delta_p u_N\to -\Delta_p u$ weakly in $X'$ completing the proof.
\end{proof}

With the aid of Propositions~\ref{prop:limf}, \ref{prop:ut}, and \ref{prop:limlaplace} we are now justified in taking the limit in \eqref{newnewlim1} and concluding that the limit function $u$ satisfies the identity 
\begin{multline}\label{newnewlim2}
(u'(t),w_j)_\O - (u'(0),w_j)_\O
+ \int_0^t \langle -\Delta_p u(\tau),w_j\rangle_p\,d\tau
+ \int_0^t \langle -\Delta_2 u'(\tau),w_j\rangle_2\,d\tau\\
= \int_0^t \int_\O f(u(\tau))w_j\,dxd\tau 
+ \int_0^t \int_\G h(\t u(\tau))\t w_j\,dSd\tau.
\end{multline} 
for all $j\in\mathbb{N}$ and a.e. $t\in[0,T]$.  From the density of $\spn\{w_j\}_{j=1}^\infty$ in $\W$ we thus obtain 
\begin{multline}\label{newnewlim3}
(u'(t),\psi)_\O - (u'(0),\psi)_\O
+ \int_0^t \langle -\Delta_p u(\tau),\psi\rangle_p\,d\tau
+ \int_0^t \langle -\Delta_2 u'(\tau),\psi\rangle_2\,d\tau\\
= \int_0^t \int_\O f(u(\tau))\psi\,dxd\tau 
+ \int_0^t \int_\G h(\t u(\tau))\t \psi\,dSd\tau.
\end{multline} 
for all $\psi\in\W$ and a.e. $t\in[0,T]$.
Before proceeding, we pause to verify that $u''$ has the desired additional regularity.
\begin{lem}\label{lem:utt}
The limit function $u$ identified in Corollary~\eqref{cor:converg} verifying identity \eqref{newnewlim3} satisfies $u''\in L^2(0,T;(\W)')$.
\end{lem}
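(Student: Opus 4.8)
The plan is to read off an explicit representation of $u''$ directly from the variational identity \eqref{newnewlim3} and then bound its $(\W)'$-norm term by term. First I would fix $\psi\in\W$ and set
\[
G_\psi(\tau):=-\langle-\Delta_p u(\tau),\psi\rangle_p-\langle-\Delta_2 u'(\tau),\psi\rangle_2+(f(u(\tau)),\psi)_\O+(h(\t u(\tau)),\t\psi)_\Gamma,
\]
noting that the second and fourth terms are meaningful because $\W\into\W[1,2]$ and by the trace theorem. Each summand is integrable in $\tau$: $\|-\Delta_p u(\tau)\|_{(\W)'}$ and $\|f(u(\tau))\|_{6/5},\,|h(\t u(\tau))|_{4/3}$ are bounded on $[0,T]$ by \eqref{converg:a} and Proposition~\ref{prop:limf}, while $\|-\Delta_2 u'(\tau)\|_{(\W[1,2])'}\le 2\|u'(\tau)\|_{1,2}\in L^2(0,T)$ by \eqref{converg:c}. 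Thus \eqref{newnewlim3} says that $t\mapsto(u'(t),\psi)_\O$ equals $(u'(0),\psi)_\O+\int_0^t G_\psi(\tau)\,d\tau$, hence is absolutely continuous with a.e.\ derivative $G_\psi(t)$. Since $\W\into\L[2]$ densely, one may regard $u'(t)\in(\W)'$ via $\langle u'(t),\psi\rangle_p:=(u'(t),\psi)_\O$, and the identity just obtained then states that the $(\W)'$-valued distributional derivative of $u'$ is the map $w$ defined by $\langle w(t),\psi\rangle_p:=G_\psi(t)$. So $u''=w$, and it remains only to check $w\in L^2(0,T;(\W)')$.

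The key estimate is then the following: for a.e.\ $t$ and all $\psi\in\W$, the operator-norm bound \eqref{plaplace-opnorm} on $-\Delta_p$, the bound $\|-\Delta_2 u'(t)\|_{(\W[1,2])'}\le 2\|u'(t)\|_{1,2}$ together with $\W\into\W[1,2]$, the Sobolev embedding $\W\into\L[6]$, the trace embedding $\W\tinto\Lb[4]$, and Hölder's inequality give
\begin{align*}
|\langle w(t),\psi\rangle_p|
&\le 2\|u(t)\|_{1,p}^{p-1}\|\psi\|_{1,p}+2\|u'(t)\|_{1,2}\|\psi\|_{1,2}+\|f(u(t))\|_{6/5}\|\psi\|_6+|h(\t u(t))|_{4/3}|\t\psi|_4\\
&\le C\big(\|u(t)\|_{1,p}^{p-1}+\|u'(t)\|_{1,2}+\|f(u(t))\|_{6/5}+|h(\t u(t))|_{4/3}\big)\|\psi\|_{1,p}.
\end{align*}
Hence $\|w(t)\|_{(\W)'}\le C\big(\|u(t)\|_{1,p}^{p-1}+\|u'(t)\|_{1,2}+\|f(u(t))\|_{6/5}+|h(\t u(t))|_{4/3}\big)$, whose right-hand side lies in $L^2(0,T)$ since the first term is bounded by \eqref{converg:a}, the second is in $L^2(0,T)$ by \eqref{converg:c}, and the last two are bounded by Proposition~\ref{prop:limf}. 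As $w(\cdot)$ is also measurable into $(\W)'$ — each summand being a composition of a strongly measurable map ($u$, $u'$, $f(u)$, $h(\t u)$) with a continuous linear inclusion or with the continuous map $-\Delta_p$ — one concludes $w\in L^2(0,T;(\W)')$, i.e.\ $u''\in L^2(0,T;(\W)')$.

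I expect no genuine obstacle here; the only point requiring care is the bookkeeping of embeddings so that every term lands in $L^2(0,T)$. In particular, the source terms must be paired against $\psi$ in $\L[6]$ and $\t\psi$ in $\Lb[4]$ rather than in $\L[2]$ and $\Lb[2]$ — which is exactly what makes the refined integrability $f(u)\in L^\infty(0,T;\L[6/5])$ and $h(\t u)\in L^\infty(0,T;\Lb[4/3])$ from Proposition~\ref{prop:limf} applicable — while the strong-damping term is absorbed through $\W\into\W[1,2]$. Note that this conclusion is strictly stronger than the Galerkin-level estimate \eqref{apriori-d}, since $\D(\A)\into\W$ forces $(\W)'\into(\D(\A))'$.
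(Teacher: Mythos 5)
Your proof is correct and follows essentially the same route as the paper: read off $u''$ from the integrated identity \eqref{newnewlim3}, bound $|\langle u''(t),\psi\rangle_p|$ term by term via the operator norm of $-\Delta_p$, the bound on $-\Delta_2 u'$, and the source estimates, and then integrate the square using \eqref{converg:a} and \eqref{converg:c}. The only (harmless) difference is that you pair the sources in $\L[6/5]$--$\L[6]$ and $\Lb[4/3]$--$\Lb[4]$ via Proposition~\ref{prop:limf}, whereas the paper, working under the globally Lipschitz hypothesis of Section~\ref{S2}, uses the $L^2$ pairings from the proof of Proposition~\ref{prop:apriori}; both yield the same $L^2(0,T)$ bound.
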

\begin{proof}
	Given any $\phi\in\W$ we obtain from \eqref{newnewlim3} that 
	\begin{multline*}
	\langle u'(t),\phi\rangle_p = (u'(t),\phi)_\O = (u'(0),\phi)_\O - \int_0^t \langle -\Delta_p u(\tau),\phi\rangle_p\,d\tau - \int_0^t \langle -\Delta_2 u'(\tau),\phi\rangle_2\,d\tau \\
	+ \int_0^t\int_\O f(u(\tau))\phi\,dxd\tau + \int_0^t \int_\G h(\t u(\tau))\t \phi\,dSd\tau,
	\end{multline*}
	wherein it is clear that $\langle u'(t),\phi\rangle_p$ is an absolutely continuous function on $[0,T]$ with 
	\begin{align*}
	\left| \frac{d}{dt}\langle u'(t),\phi\rangle_p \right| 
	\leq & |\langle -\Delta_p u(t),\phi	\rangle_p| + |\langle -\Delta_2 u'(t),\phi\rangle_2| \\
	&+ \int_\O |f(u(t))\phi|\,dx 
	+ \int_\G |h(\t u(t))||\t\phi|\,dS, \text{    a.e.     } [0,T].
	\end{align*}
	Each of these terms is readily bounded by the operator norm bounds on $-\Delta_p$ and the bounds on $f$ and $h$ exactly as was done at the end of the proof of Proposition~\ref{prop:apriori}. Thus, we find
	\begin{align}\label{utt-bound}
	|\langle u''(t),\phi\rangle_p| &\leq C \Big(||u(t)||_{1,p}^{p-1} + ||u'(t)||_{1,2} + ||u(t)||_{1,p}+1 \Big) 
	||\phi||_{1,p}\notag \\
	&\leq C(||u'(t)||_{1,2} +1 )||\phi||_{1,p}
	\end{align}
	since $||u(t)||_{1,p}$ is bounded a.e. $[0,T]$ from \eqref{converg:a}. The desired result then follows by integrating the square of \eqref{utt-bound} on $[0,T]$ since $||u'(t)||_{1,2}\in L^2(0,T)$ by \eqref{converg:c}. 
\end{proof}
\medskip

\subsection{Verification that the limit is a solution}
To verify that the limit function $u$ given in Corollary~\ref{cor:converg} does indeed satisfy every criterion of Definition~\ref{def:weaksln} we begin by recording its regularity in time, which is an immediate consequence of a well-known result by often attributed to Lions and Magenes as in \cite[Lem. 8.1]{LM1} and given here without proof. 

\begin{cor}\label{cor:uinCw}
	Up to possible modification on a set of measure zero, the limit function $u$ and its derivative $u'$ identified in Corollary~\ref{cor:converg} satisfy the additional regularity:
	$$u\in C_w([0,T];\W) \text{    \, and \,        } u'\in C_w([0,T];\L[2]).$$
\end{cor}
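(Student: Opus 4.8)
The plan is to obtain both assertions from the classical Lions--Magenes weak-continuity lemma \cite[Lem.~8.1]{LM1}: if $X\into Y$ is a continuous and dense embedding of Banach spaces with $X$ reflexive (and separable), then any $v\in L^\infty(0,T;X)$ that is continuous — or even merely weakly continuous — as a $Y$-valued function coincides, after modification on a null set, with a function in $C_w([0,T];X)$. We apply this twice: first with $X=\W$ and $Y=\L[2]$ to handle $u$, and then with $X=\L[2]$ and $Y=(\W)'$ to handle $u'$. In each case the ``larger space'' continuity input is already available from the time regularity established in Corollary~\ref{cor:converg} and Lemma~\ref{lem:utt}.

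For $u$: from \eqref{converg:a}--\eqref{converg:c} we have $u\in L^\infty(0,T;\W)$ and $u'\in L^\infty(0,T;\L[2])$. Since $\W\into\L[2]$, the function $u$ lies in $L^\infty(0,T;\L[2])$ with distributional time derivative $u'\in L^1(0,T;\L[2])$, so by the fundamental theorem of calculus for Bochner integrals (e.g.\ \cite[Prop.~II.5.11]{Boyer2013}) $u$ agrees a.e.\ with a Lipschitz, hence continuous, $\L[2]$-valued function; in particular $u\in C_w([0,T];\L[2])$. As $\W$ is separable and reflexive for $2<p<3$ and $\W\into\L[2]$ is dense, \cite[Lem.~8.1]{LM1} yields, up to modification on a null set, $u\in C_w([0,T];\W)$.

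For $u'$: we use the Gelfand triple $\W\into\L[2]\cong(\L[2])'\into(\W)'$, in which $\L[2]\into(\W)'$ is continuous and dense. From \eqref{converg:b} we have $u'\in L^\infty(0,T;\L[2])\subset L^\infty(0,T;(\W)')$, while Lemma~\ref{lem:utt} gives $u''\in L^2(0,T;(\W)')\subset L^1(0,T;(\W)')$; arguing exactly as above, $u'$ coincides a.e.\ with a function in $C([0,T];(\W)')\subset C_w([0,T];(\W)')$. Since $\L[2]$ is a separable Hilbert space, a second application of \cite[Lem.~8.1]{LM1} furnishes $u'\in C_w([0,T];\L[2])$ after modification on a null set, which together with the previous paragraph completes the proof.

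The only genuinely delicate point is the one internal to the cited lemma, namely the passage from a dense set of test functionals to all of $X'$: functionals in $Y'$ restrict to a dense subset of $X'$ (using reflexivity of $X$ and density of $X\into Y$), continuity of $t\mapsto\langle\psi,v(t)\rangle$ is known for $\psi$ in this dense subset, and the uniform bound $\esssup_{t\in[0,T]}\norm{v(t)}_X\le C$ supplies the equicontinuity needed to extend this continuity to every $\psi\in X'$ while simultaneously forcing $v(t)$ to lie in a bounded subset of $X$ for each $t$. As this argument is entirely standard we content ourselves with invoking \cite{LM1}, having recorded above the two ingredients it requires: $u\in L^\infty(0,T;\W)\cap C([0,T];\L[2])$ and $u'\in L^\infty(0,T;\L[2])\cap C([0,T];(\W)')$.
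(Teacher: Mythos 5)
Your proposal is correct and is essentially the paper's own argument: the paper simply invokes the Lions--Magenes weak-continuity lemma \cite[Lem.~8.1]{LM1} without further proof, and your two applications of it (with $u\in L^\infty(0,T;\W)\cap C([0,T];\L[2])$ and $u'\in L^\infty(0,T;\L[2])\cap C([0,T];(\W)')$, the latter using Lemma~\ref{lem:utt}) are exactly the intended route.
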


We now must show that the limit function $u$ satisfies the variational identity \eqref{slnid} which permits time dependent test functions.  Through a density arguemnt as in \cite[Prop. A.1]{PRT-p-Laplacain} it can be shown that the regularity afforded by Lemma~\ref{lem:utt} implies a product rule of the form 
\begin{align}\label{prodrule}
	\frac{d}{dt}(u'(\tau),\phi(\tau))_\O = \langle u''(\tau),\phi(\tau)\rangle_p + (u'(\tau),\phi'(\tau))_\O
\end{align}
is indeed valid for any test function $\phi\in C_w([0,T];\W)$ with $\phi'\in L^2(0,T;\W[1,2])$.  With this we may express \eqref{newnewlim3} equivalently as 
\begin{multline}\label{passtolim1}
\int_0^t \langle u''(\tau),\psi\rangle_p\,d\tau 
+ \int_0^t \langle -\Delta_p u(\tau),\psi\rangle_p\,d\tau
+ \int_0^t \langle -\Delta_2 u'(\tau),\psi\rangle_2\,d\tau\\
= \int_0^t \int_\O f(u(\tau))\psi\,dxd\tau 
+ \int_0^t \int_\G h(\t u(\tau))\t \psi\,dSd\tau.
\end{multline} 
As each term in \eqref{passtolim1} is absolutely continuous we may differentiate in time and then replace $\psi$ with $\phi(\tau)$ for any time-dependent test function $\phi$. Integrating on $[0,t]$ and again utilizing the product rule \eqref{prodrule} we obtain the desired identity,
\begin{multline*}%
\overbrace{(u_t(t),\phi(t))_\O - (u_1,\phi(0))_\O - \int_0^t (u'(\tau),\phi'(\tau))_\O\,d\tau}^{\int_0^t \langle u''(\tau),\phi(\tau)\rangle_p\,d\tau}  
+ \int_0^t \langle -\Delta_p u(\tau),\phi(\tau)\rangle_p\,d\tau\\
+ \int_0^t \langle -\Delta_2 u'(\tau),\phi(\tau)\rangle_2\,d\tau
= \int_0^t \int_\O f(u(\tau))\phi(\tau)\,dxd\tau 
+ \int_0^t \int_\G h(\t u(\tau))\t \phi(\tau)\,dSd\tau.
\end{multline*} 
This completes the proof that the limit function $u$ is indeed a solution in every sense of Definition~\ref{def:weaksln}.

\subsection{Energy inequality}
In order to complete the proof of Theorem~\ref{thm:exist}  in the case where $f,h$ are globally Lipschitz it remains only to establish the appropriate energy inequalities which are given in the following proposition.

\begin{prop}\label{prop:energy} The limit function $u$ identified in Corollary~\ref{cor:converg} satisfies the energy inequalities \eqref{energy-2ndid} and \eqref{energy-1stid} in the statement of Theorem~\ref{thm:exist}.
\end{prop}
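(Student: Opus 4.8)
The plan is to establish the energy inequality \eqref{energy-2ndid} by passing to the limit in the approximate energy identity \eqref{apriori-3}, using the weak/strong convergences from Corollary~\ref{cor:converg} together with lower semicontinuity. The equivalence with \eqref{energy-1stid} will then follow from the chain rule for $F$ and $H$ and the already-established regularity of $u$.

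\textbf{Step 1: Lower semicontinuity of the left side.} From \eqref{apriori-3}, each $u_N$ satisfies
\begin{equation*}
\E_N(t) + \int_0^t \|u_N'(\tau)\|_{1,2}^2\,d\tau = \E_N(0) + \int_0^t\!\int_\O f(u_N)u_N'\,dx\,d\tau + \int_0^t\!\int_\G h(\t u_N)\t u_N'\,dS\,d\tau.
\end{equation*}
For the term $\frac12\|u_N'(t)\|_2^2$: we know $u_N'(t)\to u'(t)$ strongly in $\L[2]$ for a.e.\ $t$ on a subsequence (as noted in part (i) of the proof of Proposition~\ref{prop:limlaplace}), so this converges for a.e.\ $t$. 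For $\frac1p\|u_N(t)\|_{1,p}^p$: from Corollary~\ref{cor:aeinw1p}, $u_N(t)\to u(t)$ weakly in $\W$ for a.e.\ $t$, so by weak lower semicontinuity of the norm, $\|u(t)\|_{1,p}^p \le \liminf_N \|u_N(t)\|_{1,p}^p$. For $\int_0^t\|u_N'\|_{1,2}^2\,d\tau$: from \eqref{converg:c}, $u_N'\to u'$ weakly in $L^2(0,T;\H[1])$, and since $\tau\mapsto \chi_{[0,t]}(\tau)u'(\tau)$ is a legitimate element of that space, weak lower semicontinuity of the $L^2(0,T;\H[1])$ norm gives $\int_0^t\|u'\|_{1,2}^2\,d\tau \le \liminf_N \int_0^t\|u_N'\|_{1,2}^2\,d\tau$. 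Combining, for a.e.\ $t\in[0,T]$,
\begin{equation*}
\E(t) + \int_0^t \|u'(\tau)\|_{1,2}^2\,d\tau \le \liminf_{N\to\infty}\left(\E_N(t) + \int_0^t \|u_N'(\tau)\|_{1,2}^2\,d\tau\right).
\end{equation*}

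\textbf{Step 2: Convergence of the right side.} The term $\E_N(0)\to \E(0)$ by \eqref{2.4}. For the source integrals, since $f(u_N)\to f(u)$ strongly in $L^\infty(0,T;\L[6/5])$ by Proposition~\ref{prop:limf}(\ref{prop:limf-a}) and $u_N'\to u'$ weakly in $L^2(0,T;\H[1])\hookrightarrow L^2(0,T;\L[6])$, and $6/5$ and $6$ are dual, the product $\int_0^t\int_\O f(u_N)u_N'\,dx\,d\tau$ converges to $\int_0^t\int_\O f(u)u'\,dx\,d\tau$ — being careful that the strong $\times$ weak pairing is justified on the fixed interval $[0,t]$. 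Similarly, $h(\t u_N)\to h(\t u)$ strongly in $L^\infty(0,T;\Lb[4/3])$ by Proposition~\ref{prop:limf}(\ref{prop:limf-b}), while $\t u_N'\to\t u'$ strongly in $L^2(0,T;\Lb[2])$ by \eqref{converg:e} and the continuity of $\W[1-\epsilon,2]\tinto\Lb[2]$; since these sit in dual spaces the boundary source integral converges as well. Passing to the $\liminf$ in the identity and using Step~1 yields \eqref{energy-2ndid} for a.e.\ $t$, and then for all $t\in[0,T]$ by the weak continuity of $u$ and $u'$ from Corollary~\ref{cor:uinCw} (the left side is lower semicontinuous in $t$ and the right side is continuous in $t$, so upgrading the a.e.\ inequality to all $t$ is routine).

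\textbf{Step 3: Equivalence of the two forms.} To pass from \eqref{energy-2ndid} to \eqref{energy-1stid}, observe that $u\in C_w([0,T];\W)$ with $u'\in L^2(0,T;\H[1])$, and by Remark~\ref{rmk:embeddings} the maps $t\mapsto \int_\O F(u(t))\,dx$ and $t\mapsto\int_\G H(\t u(t))\,dS$ are well defined and absolutely continuous; the chain rule gives $\frac{d}{dt}\int_\O F(u(t))\,dx = \int_\O f(u(t))u'(t)\,dx$ and similarly on $\G$. Integrating from $0$ to $t$ converts the source terms in \eqref{energy-2ndid} into $\int_\O F(u(t))\,dx - \int_\O F(u_0)\,dx + \int_\G H(\t u(t))\,dS - \int_\G H(\t u_0)\,dS$, and rearranging with $E(t) = \E(t) - \int_\O F(u(t))\,dx - \int_\G H(\t u(t))\,dS$ gives exactly \eqref{energy-1stid}. \textbf{The main obstacle} I anticipate is the rigorous justification of the chain rule for $t\mapsto\int_\O F(u(t))\,dx$ at this low level of regularity — one should either cite a standard result (the relevant integrability being guaranteed by $\W\hookrightarrow\L[q+1]$ and the trace $\W\tinto\Lb[r+1]$ from Remark~\ref{rmk:embeddings}) or, safely, derive \eqref{energy-1stid} directly from the approximate identity by the same limiting procedure applied to $\E_N(t) + \int_0^t\|u_N'\|_{1,2}^2\,d\tau - \int_\O F(u_N(t))\,dx - \int_\G H(\t u_N(t))\,dS$, using that $u_N(t)\to u(t)$ strongly in $\W[1-\epsilon,p]$ uniformly on $[0,T]$ by \eqref{converg:d} (hence the $F$ and $H$ terms converge for every $t$), thereby avoiding the chain rule for the limit function altogether.
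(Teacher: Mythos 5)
Your proposal is correct in substance and uses the same core ingredients as the paper: weak lower semicontinuity of $\E_N(t)+\int_0^t\|u_N'(\tau)\|_{1,2}^2\,d\tau$ on the left, convergence of the source terms on the right, and conversion between the integrals $\int_0^t\int_\O f(u_N)u_N'$ and the differences of $F$, $H$ carried out at the Galerkin level — your ``safe'' alternative in Step 3 is exactly what the paper does, and is the right call, since the paper never invokes a chain rule for the limit function. The only structural difference is the order: you prove \eqref{energy-2ndid} first and deduce \eqref{energy-1stid}, whereas the paper converts the sources to $\int_\O F(u_N(t))\,dx$ and $\int_\G H(\t u_N(t))\,dS$ while still at the Galerkin level, proves \eqref{energy-1stid} first (using the mean-value-theorem estimates \eqref{energy-5a}--\eqref{energy-5b} and \eqref{converg:d} to pass to the limit in $F$ and $H$), and then recovers \eqref{energy-2ndid}. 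Neither order buys anything substantive.

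One point in Step 2 needs repair. For the boundary source you pair $h(\t u_N)\to h(\t u)$ strongly in $L^\infty(0,T;\Lb[4/3])$ with $\t u_N'\to\t u'$ strongly in $L^2(0,T;\Lb[2])$ and assert that ``these sit in dual spaces.'' They do not: the conjugate exponent of $4/3$ is $4$, not $2$, so H\"older fails for the pair $(4/3,2)$, and strong $\Lb[2]$ convergence of $\t u_N'$ cannot by itself control the product against something only known in $\Lb[4/3]$. The correct pairing is strong convergence of $h(\t u_N)$ in $L^2(0,T;\Lb[4/3])$ against \emph{weak} convergence of $\t u_N'$ in $L^2(0,T;\Lb[4])$, which follows from \eqref{converg:c} together with the trace $\H[1]\tinto\Lb[4]$; this is precisely the ``weak--strong'' argument the paper spells out in the analogous Proposition~\ref{prop:sc-energy}. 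With that one substitution your argument goes through.
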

\begin{proof} 
From \eqref{apriori-3} in the course of establishing the a priori estimates it was shown that each $u_N$ satisfies 
\begin{multline}\label{energy-2}
\E_N(t) + \int_0^t ||u_N'(\tau)||_{1,2}^2\,d\tau = \int_0^t\int_\Omega f(u_N(\tau))u_N'(\tau)\,dxd\tau \\+ \int_0^t\int_\Gamma h(\t u_N(\tau))\t u_N'(\tau)\,dSd\tau + \E_N(0)
\end{multline}
with positive energy $\E_N (t)= \frac{1}{2}||u_N'(t)||_2^2 + \frac{1}{p}||u_N(t)||_{1,p}^p$, so that 
\begin{multline}\label{energy-3}
\E_N(t) + \int_0^t ||u_N'(\tau)||_{1,2}^2\,d\tau = \int_\O \left( F(u_N(t)) - F(u_N(0))\right)\,dx \\
+ \int_\G \left( H(\t u_N(t)) - H(\t u_N(0))\right)\,dS + \E_N(0)
\end{multline}
by taking $F(u_N)=\int_0^{u_N} f(s)\,ds$ and $H(\t u_N) = \int_0^{\t u_N}h(s)\,ds$ as the primitives of $f$ and $h$. 
By defining the total energy
\begin{align*}
 E_N(t) = \E_N (t) - \int_\O F(u_N(t))\,dx - \int_\G H(\t u_N(t))\,dS
\end{align*}
we may then re-express \eqref{energy-3} as 
\begin{align}\label{energy-4}
E_N(t) + \int_0^t ||u_N'(\tau)||_{1,2}^2 \,d\tau = E_N(0)\ptag{energy-3}.
\end{align}
In order to pass to the limit in \eqref{energy-4} and establish \eqref{energy-2ndid} we first address the convergence of the terms arising from $F$ and $H$. From the mean value theorem for integrals, 
\begin{align*}
|F(u_N)-F(u)| = |f(\xi)||u_N-u| \leq C (|\xi|^q+1)|u_N-u|
\end{align*}
for some $\xi$ with $|\xi|\leq |u_N|+|u|$. Thus, by using the bounds in Remark \ref{rmk:fbound} along with H\"older's inequality with conjugate exponents $6$ and $6/5$ we then have 
\begin{align}\label{energy-5a}
\int_\O |F(u_N)-F(u)|\,dx &\leq C \int_\O (1+|u_N|^q + |u|^q )|u_N-u|\,dx\notag \\
& \leq C\left(\int_\O (1+|u_N|^q+|u|^q)^{6/5}\,dx\right)^{5/6} ||u_N-u||_6\notag \\
&\leq C(1+||u_N||_{6q/5}^q+||u||_{6q/5}^q)||u_N-u||_6\notag \\
& \leq C(1+||u_N||_{1-\epsilon,p}^q + ||u||_{1-\epsilon,p}^q)||u_N-u||_{1-\epsilon,p}
\end{align}
from the continuity of the embeddings $\W[1-\epsilon,p]\into \L[6q/5]$ and $\W[1-\epsilon,p]\into\L[6]$. Analogously, 
\begin{align}\label{energy-5b}
\int_\G |H(\t u_N)-H(\t u)|\,dS 
& \leq C(1+||u_N||_{1-\epsilon,p}^r + ||u||_{1-\epsilon,p}^r)||u_N-u||_{1-\epsilon,p}
\end{align}
from the continuity of the maps $\W[1-\epsilon,p]\tinto \Lb[4r/3]$ and $\W[1-\epsilon,p]\tinto \Lb[4]$.
From the convergence in \eqref{converg:d} we thus obtain 
\begin{multline}\label{energy-6}
\lim_{N\to\infty} \left(\int_\O F(u_N(t))\,dx + \int_\G H(\t u_N(t)))\,dS\right) \\= \int_\O F(u(t))\,dx + \int_\G H(\t u(t))\,dS;\quad t\in[0,T].
\end{multline}
By recalling the convergence in \eqref{converg:c} we may thus utilize weak lower semicontinuity of the norms to conclude from \eqref{energy-6} and \eqref{energy-4}, respectively, that 
\begin{align}\label{energy-7}
E(t) + \int_0^t ||u'(\tau)||_{1,2}^2\,d\tau &\leq \liminf_{N\to\infty}\left( E_N(t) + \int_0^t ||u_N'(\tau)||_{1,2}^2 \,d\tau \right)\notag \\
&= \liminf_{N\to\infty}E_N(0)\notag \\
&=E(0)
\end{align}
with $E(t)$ as in the statement of Theorem~\ref{thm:exist}. The identification of $\lim_{N\to\infty}E_N(0)=E(0)$ follows from from the convergence in \ref{2.4} along with \eqref{energy-6} with $t=0$. This establishes the energy inequality \eqref{energy-1stid}.

The identity \eqref{energy-2ndid} follows immediately from \eqref{energy-1stid} and the fundamental theorem after demonstrating that 
\begin{subequations}\label{energy-lims}\begin{align}
		\lim_{N\to\infty}\int_0^t \int_\O f(u_N(\tau))u_N'(\tau)\,dxd\tau &= \int_0^t \int_\O f(u(\tau))u'(\tau)\,dxd\tau,\label{energy-9a}\\
		\lim_{N\to\infty} \int_0^t \int_\G h(\t u_N(\tau))\t u_N'(\tau)\,dSd\tau &=\int_0^t \int_\G h(\t u(\tau))\t u'(\tau)\,dSd\tau.\label{energy-9b}
	\end{align}\end{subequations}
	The limits in \eqref{energy-lims} are readily verified from the convergence in Corollary~\ref{cor:converg}, and are omitted here.
\end{proof}

\section{Solutions for locally Lipschitz sources}\label{S3}
Having established the existence of solutions in the case where $f,h$ are globally Lipschitz we shall now relax the assumptions on these source functions.  This is accomplished by a truncation applied to the source terms in an approach similar to \cite{CEL1, GR, PRT-p-Laplacain, RW}.

One cannot expect the solutions obtained for locally Lipschitz sources to necessarily be global in time as was obtained in the preceding section for globally Lipschitz sources.  Moreover, to complete the proof of existence for more general sources in the following section it will be essential to track the dependencies of this finite existence time.   The necessary results are stated precisely in the following proposition:

\begin{prop}\label{prop:ll} Assume that the functions $f:\W\to\L[2]$ and $h\circ\t:\W\to\Lb[2]$ are locally Lipschitz continuous with constants $L_{f,K}$ and $L_{h,K}$ on the ball of radius $K$ about zero in $\W$. Then, problem \eqref{wave} possesses a local solution in the sense of Definition~\ref{def:weaksln} on an interval $[0,T_0]$.

	Further, the interval of existence for this solution depends only on the local Lipschitz constants $L_{f,K}$ and $L_{h,K}$ of $f:\W\to\L[6/5]$ and $h\circ\t:\W\to\Lb[4/3]$ on a ball of radius $K$ to be prescribed as a function of the initial energy $\E(0)$.  
\end{prop}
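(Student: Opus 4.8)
The plan is to reduce, by a truncation of the source feedback terms, to the globally Lipschitz case already treated in Section~\ref{S2}, and then to run the approximate solutions only up to the first time they would leave the ball on which the truncation is transparent. Fix $(u_0,u_1)$, write $\E(0)=\tfrac12\|u_1\|_2^2+\tfrac1p\|u_0\|_{1,p}^p$, and choose once and for all a radius $K=K(\E(0))$ with $K^p>p(\E(0)+1)$, so that $u_0$ lies well inside $B_K:=\{v\in\W:\|v\|_{1,p}\le K\}$. Let $\pi_K:\W\to B_K$ be the radial retraction onto $B_K$ (that is, $\pi_K v=v$ for $v\in B_K$ and $\pi_K v=Kv/\|v\|_{1,p}$ otherwise); it is globally Lipschitz with constant at most $2$ and has range in the bounded set $B_K$. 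Set $f_K:=f\circ\pi_K$ and $h_K:=h\circ\t\circ\pi_K$. Since $f:\W\to\L[2]$ and $h\circ\t:\W\to\Lb[2]$ are locally Lipschitz by hypothesis, while $f:\W\to\L[6/5]$ and $h\circ\t:\W\to\Lb[4/3]$ are locally Lipschitz by Lemma~\ref{lem:f-lipschitz}, the truncated maps $f_K$, $h_K$ are \emph{globally} Lipschitz from $\W$ into each of these spaces, with constants controlled by $L_{f,K}$ and $L_{h,K}$; and, $B_K$ being bounded, $\|f_K(v)\|_{6/5}$ and $|h_K(v)|_{4/3}$ are dominated by a single constant $C_\ast=C_\ast(K,L_{f,K},L_{h,K})$ for every $v\in\W$. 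Crucially, $f_K(v)=f(v)$ and $h_K(v)=h(\t v)$ for all $v\in B_K$.

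Now carry out the Galerkin scheme of Section~\ref{S2} for problem \eqref{wave} with $f,h$ replaced by $f_K,h_K$. Because $f_K$ and $h_K$ are globally Lipschitz, the approximate systems now admit solutions $u_N$ on all of $[0,\infty)$, and because $f_K$, $h_K$ are bounded, the energy $\E_N(t)=\tfrac12\|u_N'(t)\|_2^2+\tfrac1p\|u_N(t)\|_{1,p}^p$ satisfies
\begin{align*}
\E_N(t)\ \le\ \E_N(0)+C_0\,t,\qquad t\ge 0,
\end{align*}
with $C_0=C_0(K,L_{f,K},L_{h,K})$ obtained from the energy identity \eqref{apriori-1} (with $f_K,h_K$ in place of $f,h$) via H\"older's inequality (conjugate exponents $6/5,6$ in $\O$ and $4/3,4$ on $\G$), the Sobolev embedding $\W[1,2]\into\L[6]$ and the trace inequality $\W[1,2]\tinto\Lb[4]$, and Young's inequality to absorb $\int_0^t\|u_N'(\tau)\|_{1,2}^2\,d\tau$. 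From \eqref{2.4} and continuity of the norms, $\E_N(0)\to\E(0)$, so $\E_N(0)\le\E(0)+1$ for all large $N$, whence $\|u_N(t)\|_{1,p}^p\le p\,\E_N(t)\le p(\E(0)+1)+pC_0t$ for every $t\ge0$. Put
\begin{align*}
T_0\ :=\ \frac{K^p-p(\E(0)+1)}{p\,C_0}\ >\ 0,
\end{align*}
which is positive by the choice of $K$. For $t\in[0,T_0]$ and $N$ large we then have $\|u_N(t)\|_{1,p}^p\le p(\E(0)+1)+pC_0T_0=K^p$, so $\|u_N(t)\|_{1,p}\le K$; consequently $\pi_K(u_N(t))=u_N(t)$ and therefore $f_K(u_N(t))=f(u_N(t))$, $h_K(u_N(t))=h(\t u_N(t))$ on $[0,T_0]$.

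Thus, on $[0,T_0]$ the truncated Galerkin problem \emph{is} the original one: the a priori bounds of Proposition~\ref{prop:apriori} hold on $[0,T_0]$ (using $\|u_N(t)\|_{1,p}\le K$ to bound $\|f(u_N)\|_{6/5}$ and $|h(\t u_N)|_{4/3}$), and the compactness and passage-to-the-limit arguments of Section~\ref{S2} go through without change, producing a weak solution $u$ of \eqref{wave} on $[0,T_0]$ in the sense of Definition~\ref{def:weaksln} together with the energy inequalities \eqref{energy-2ndid} and \eqref{energy-1stid}. The claimed dependence is now transparent: $K$ is a function of $\E(0)$ alone, and then $C_0$ — and hence the length $T_0$ — depends only on $K$, equivalently on $\E(0)$, and on the local Lipschitz constants $L_{f,K}$, $L_{h,K}$ of $f:\W\to\L[6/5]$ and $h\circ\t:\W\to\Lb[4/3]$ on $B_K$.

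The main obstacle is the consistent choice of the constants: the radius $K$ must be fixed \emph{before} the truncation, and hence $C_0$, is determined, one must verify that the resulting $T_0$ is genuinely positive, and the bound $\|u_N(t)\|_{1,p}\le K$ must be shown to hold on \emph{all} of $[0,T_0]$ and \emph{uniformly in} $N$ — it is precisely this that makes the truncation invisible on $[0,T_0]$ and lets Section~\ref{S2} be imported wholesale. A secondary point worth recording is that one cannot instead truncate $f$ by an ordinary pointwise cutoff on $\R$: since $\W$ does not embed into $L^\infty(\O)$ in three dimensions, such a cutoff would coincide with $f$ only on $L^\infty$-balls rather than on $\W$-balls, which is what forces the norm-dependent truncation used above.
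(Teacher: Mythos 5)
Your proof is correct and follows essentially the same route as the paper: truncate the sources by the radial retraction onto $B_K$ (the paper's $f_K$, $h_K$ in \eqref{ll-truncf}--\eqref{ll-trunch} are exactly $f\circ\pi_K$ and $h\circ\t\circ\pi_K$), solve the resulting globally Lipschitz problem via Section~\ref{S2}, and show the solution cannot leave $B_K$ before a positive time $T_0$ computable from $\E(0)$ and the local Lipschitz constants. The only differences are cosmetic: you exploit the uniform $\L[6/5]$ and $\Lb[4/3]$ bounds on the truncated sources to obtain a linear-in-$t$ energy bound at the Galerkin level, whereas the paper works with the limit solution of the $K$-problem and runs a Gronwall argument giving the exponential bound $\E(t)\le(\E(0)+C_Mt)\exp(pC_Mt)$; both yield an escape time with the claimed dependence.
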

\begin{proof}
	Fix an arbitrary value of $K>p\E(0)$ and define the truncated source functions 
\begin{subequations}\begin{align}
f_K(u)&=\begin{cases}f(u)&\text{ for } ||u||_{1,p}\leq K,\\ f\left(\frac{Ku}{||u||_{1,p}}\right)&\text{ for }||u||_{1,p}>K,\end{cases}\label{ll-truncf}\\
h_K(\t u)&=\begin{cases}h(\t u)&\text{ for } ||u||_{1,p}\leq K,\\ h\left(\frac{K\t u}{||u||_{1,p}}\right)&\text{ for }||u||_{1,p}>K.\end{cases}\label{ll-trunch}
\end{align}\end{subequations}
It is readily verified that each $f_K:\W\to\L[2]$ and $h_K\circ\t:\W\to\Lb[2]$ is globally Lipschitz continuous, a proof of which can be found in \cite{PRT-p-Laplacain} and \cite{MOHNICK1}, for instance.  

Using these globally Lipschitz truncations, we may find by the results of Section~\ref{S2} a global solution $u$ to the corresponding $K$ problem
\begin{align}
\label{ll-wave}
\begin{cases}
u_{tt}-\Delta_p u -\Delta u_t = f_K(u) &\text{ in } \Omega \times (0,\infty),\\[.1in]
(u(0),u_t(0))=(u_0,u_1),\\[.1in]
|\grad u|^{p-2}\partial_\nu u + |u|^{p-2}u + \partial_\nu u_t + u_t = h_K(u)&\text{ on }\Gamma \times(0,\infty).
\end{cases}\tag{\thesection.K}
\end{align}

We shall now seek to find a sufficiently small interval $[0,T_0]$ upon which $||u(t)||_{1,p}\leq K$ upon which $f_K=f$ and $h_K=h$. That is, $u$ will be a solution to the non-truncated problem \eqref{wave} on $[0,T_0]$.  For this, several estimates on the source terms are first necessary.  

We start by choosing an arbitrary initial value of $T_1>0$, and note that from \eqref{apriori-a} we have $||u(t)||_{1,p}\leq M$ a.e. $[0,T_1]$ for some constant $M$.  Using the same type of estimate as in \eqref{apriori-3.4} we find that 
\begin{align}\label{ll-flip}
||f(u)||_{6/5}&\leq ||f(u)-f(0)||_{6/5}+||f(0)||_{6/5}\notag\\
&\leq L_{f,M}||u||_{1,p} + ||f(0)||_{6/5}\notag\\
&\leq C_M(||u||_{1,p}+1),
\end{align}
whereby mirroring the calculation in \eqref{apriori-3.5} we see that \begin{align}\label{ll-fbound}
\int_\O f(u)u'\,dx &\leq C_M(||u||_{1,p}^p+1)+\frac{1}{4}||u'||_{1,2}^2\quad\text{a.e. }[0,T_1]
\end{align}
with the constant $C_M$ derived from the local Lipschitz constant $L_{f,M}$. Analogously,
\begin{align}\label{ll-hbound}
\int_\G h(\t u)\t u'\,dS\leq C_M(||u||_{1,p}^p+1)+\frac{1}{4}||u'||_{1,2}^2\quad\text{a.e. }[0,T_1].
\end{align}
In particular, \eqref{ll-fbound} and \eqref{ll-hbound} imply that 
\begin{multline*}
\int_0^t \int_\O f_K(u)u'\,dxd\tau + \int_0^t \int_\G h_K(\t u)\t u'\,dSd\tau\\ \leq C_M \int_0^t (p\E(\tau) +1)\,d\tau + \int_0^t ||u'(\tau)||_{1,2}^2\,d\tau
\end{multline*}
on $[0,T_1]$, so that from the energy inequality \eqref{energy-2ndid} we see that the positive energy associated with a solution $u$ must satisfy  
\begin{align*}
\E(t) \leq \E(0) + C_M\int_0^t (p\E(\tau)+1)\,d\tau,\quad\t\in[0,T_1].  
\end{align*}
From Gronwall's inequality we thus obtain the bound \begin{align}\label{ll-ebound}\E(t)\leq (\E(0)+C_M t)\exp(pC_M t)\end{align}
on $[0,T_1]$. 
By choosing
\[
T_0 = \min \left\{T_1,\frac{K-p\E(0)}{pC_M},\frac{(p-1)\ln K}{pC_M}  \right\}
\]
we then find from \eqref{ll-ebound} that on the subinterval  $[0,T_0]\subset [0,T_1]$  we have  \begin{align*}
||u||_{1,p}^p \leq p\E(t) \leq p\underbrace{(\E(0)+C_Mt)}_\text{(i)}\underbrace{\exp(pC_Mt)}_\text{(ii)}\leq K^p
\end{align*}
given that $\text{(i)}\leq K/p$ and $\text{(ii)}\leq K^{p-1}$. This shows that $u$ is indeed a solution to the non-truncated problem on $[0,T_0]$; and since $||u(t)||_{1,p}\leq K\leq M$ on $[0,T_0]$ the length of the interval $[0,T_0]$ indeed depends only on the local Lipschitz constants $L_{f,K}$ and $L_{h,K}$, as desired.
\end{proof}

\smallskip

\section{General sources}\label{S4}  In order to establish the existence of solutions for more general sources we employ another truncation argument as in \cite{RW,PRT-p-Laplacain}. To begin, select as in \cite{Radu1} a sequence $\{\eta_n\}\subset C^\infty(\R)$ of cutoff functions such that 
\begin{align*}
0\leq\eta_n\leq 1,\quad |\eta_n'(s)|\leq \frac{C}{n},\quad\text{and }
\begin{cases}
\eta_n(s)=1,&\text{ for }|s|\leq n,\\
\eta_n(s)=0,&\text{ for }|s|>2n
\end{cases}
\end{align*}
for some constant $C$ independent from $n$ and define 
\begin{subequations}\begin{align}
f_n(u)&=f(u)\eta_n(u),\label{sc-f}\\ h_n(\t u) &= h(\t u)\eta_n(\t u)\label{sc-h}.
\end{align}\end{subequations}

 With these truncated sources we intend to build a sequence $\{u_n\}$ of approximate solutions where each $u_n$ satisfies the corresponding $n$-problem
\begin{align}\label{sc-n}
\begin{cases}
  u_{tt}-\Delta_p u -\Delta u_t = f_n(u) &\text{ in } \Omega \times (0,T),\\[.1in]
  (u(0),u_t(0))=(u_0,u_1),\\[.1in]
  |\grad u|^{p-2}\partial_\nu u + |u|^{p-2}u + \partial_\nu u_t + u_t = h_n(u)&\text{ on }\Gamma \times(0,T).
  \end{cases}\tag{\thesection.n}
  \end{align}

To do this we shall leverage the results of Section~\ref{S3} by showing that both $f_n$ and $h_n\circ\t$ are indeed locally Lipschitz as maps from $\W$ into $\L[2]$ and $\Lb[2]$, respectively.  In order to maintain a positive interval of existence for all of these approximate solutions we shall additionally need to find bounds on the local Lipschitz constants of these functions as maps into $\L[6/5]$ and $\Lb[4/3]$ which are, in an appropriate sense, independent of $n$.  In fact, these truncations satisfy even slightly more than these requirements.  The proof of this lemma is a routine series of estimates as in \cite[Sec. A]{MOHNICK1}, for instance.
\begin{lem}\label{lem:sc-lip}Each $f_n$ and $h_n$ given by \eqref{sc-f} and \eqref{sc-h} satisfy:
	\begin{enumerate}[(i)]
		\item $f_n:\W\to\L[2]$ and $h_n\circ\t:\W\to\Lb[2]$ are both globally Lipschitz continuous;
		\item $f_n:\W[1-\epsilon,p]\to\L[6/5]$ and $h_n\circ\t:\W[1-\epsilon,p]\to\Lb[4/3]$ are both locally Lipschitz continuous, and on any ball of radius $K$ these constants are \emph{independent} of $n$. That is, given any $K>0$ there exists a constant $C_K$ independent of $n$ such that 
		\begin{align*}
			||f_n(u)-f_n(v)||_{6/5},\,|h_n(\t u) - h_n(\t v)|_{4/3}\leq C_K ||u-v||_{1-\epsilon,p}
		\end{align*}
		for all $n$ and all $u,v\in\W$ with $||u||_{1-\epsilon,p}, ||v||_{1-\epsilon,p}\leq K$.
	\end{enumerate}
\end{lem}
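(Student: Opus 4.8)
The strategy is to reduce everything to the pointwise growth estimates on $f$ and $h$ recorded in Remark~\ref{rmk:fbound}, together with the structural properties of the cutoffs $\eta_n$, and then feed the resulting difference quotients through the Sobolev and trace embeddings collected in the introduction. First I would treat the interior source $f_n$; the boundary source $h_n\circ\gamma$ is handled by the identical argument with $q$ replaced by $r$, the domain $\Omega$ replaced by $\Gamma$, the Sobolev embedding replaced by the trace map $\W[1-\epsilon,p]\tinto\Lb[4r/3]\into\Lb[4]$, and the Hölder exponents $(6,6/5)$ replaced by $(4,4/3)$, so it suffices to carry out the computation once.

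For part (i), the key observation is that $\eta_n$ is supported in $\{|s|\le 2n\}$, so $f_n$ is a bounded, globally Lipschitz function of one real variable: indeed $|f_n'(s)| = |f'(s)\eta_n(s) + f(s)\eta_n'(s)| \le C(|s|^{q-1}+1)\mathbf{1}_{|s|\le 2n} + C(|s|^q+1)\tfrac{C}{n}\mathbf{1}_{|s|\le 2n}$, which is bounded by a constant $C_n$ depending on $n$. Hence $|f_n(u)-f_n(v)|\le C_n|u-v|$ pointwise, and integrating over $\Omega$ gives $\|f_n(u)-f_n(v)\|_2 \le C_n\|u-v\|_2 \le C_n C\|u-v\|_{1,p}$ by the embedding $\W\into\L[2]$; this is the asserted global Lipschitz bound into $\L[2]$, and likewise $h_n\circ\gamma:\W\to\Lb[2]$ is globally Lipschitz.

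For part (ii) — the part that carries the real content — I would fix $K>0$ and $u,v\in\W$ with $\|u\|_{1-\epsilon,p},\|v\|_{1-\epsilon,p}\le K$, and write $f_n(u)-f_n(v) = (f(u)-f(v))\eta_n(u) + f(v)(\eta_n(u)-\eta_n(v))$. The first term is bounded pointwise, using Remark~\ref{rmk:fbound}, by $C(|u|^{q-1}+|v|^{q-1}+1)|u-v|$; the second term, using $|f(v)|\le C(|v|^q+1)$ and $|\eta_n(u)-\eta_n(v)|\le \tfrac{C}{n}|u-v|$, is bounded pointwise by $\tfrac{C}{n}(|v|^q+1)|u-v|\le C(|v|^{q-1}+1)|u-v|$ on the set $\{|v|\le 2n\}$ where it is nonzero (since there $|v|^q/n \le (2)^{q-1}|v|^{q-1}$), and is zero off that set — so in all cases the second term obeys the same bound as the first, uniformly in $n$. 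Summing, $|f_n(u)-f_n(v)|\le C(|u|^{q-1}+|v|^{q-1}+1)|u-v|$ with $C$ independent of $n$. Now apply Hölder's inequality in $L^{6/5}(\Omega)$ with conjugate exponents $6$ and $6/5$ (split as in \eqref{energy-5a}): $\|f_n(u)-f_n(v)\|_{6/5}\le C\big(1+\|u\|_{6(q-1)/5}^{q-1}+\|v\|_{6(q-1)/5}^{q-1}\big)\|u-v\|_6$. The point is that $q<\tfrac{5p}{2(3-p)}$ forces $\tfrac{6(q-1)}{5} < \tfrac{6p}{3-p}$ (so $\W[1-\epsilon,p]\into L^{6(q-1)/5}(\Omega)$ for $\epsilon$ small) and also $\W[1-\epsilon,p]\into L^6(\Omega)$ for $\epsilon$ small — exactly the embeddings already invoked in Remark~\ref{rmk:embeddings} and \eqref{energy-5a}. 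Using $\|u\|_{1-\epsilon,p},\|v\|_{1-\epsilon,p}\le K$ absorbs the norm factors into a constant $C_K$ depending only on $K$ (not on $n$), yielding $\|f_n(u)-f_n(v)\|_{6/5}\le C_K\|u-v\|_{1-\epsilon,p}$, and the analogous boundary estimate completes the proof.

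The main obstacle — really the only subtlety — is the treatment of the term $f(v)(\eta_n(u)-\eta_n(v))$: one must exploit that it is supported where $|v|$ (or $|u|$) is comparable to $n$ in order to convert the dangerous factor $|v|^q/n$ into $|v|^{q-1}$ times a constant, so that the $n$-dependence cancels and the resulting growth exponent is exactly $q-1$, matching the already-available Sobolev embedding. Once that cancellation is observed, everything else is the routine Hölder-plus-embedding bookkeeping already exhibited in \eqref{energy-5a}–\eqref{energy-5b}, so I would state it briefly and refer to those computations.
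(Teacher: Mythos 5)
Your overall architecture for this lemma --- a pointwise difference bound for $f_n$ that is uniform in $n$, followed by H\"older's inequality and the Sobolev/trace embeddings --- is indeed the ``routine'' route the paper has in mind (it defers the details to \cite[Sec. A]{MOHNICK1}), and your part (i) is fine. In part (ii), however, the treatment of the cross term is not correct as written: $f(v)\left(\eta_n(u)-\eta_n(v)\right)$ is \emph{not} supported in $\{|v|\le 2n\}$, since for $|v|>2n$ and $|u|\le 2n$ it equals $f(v)\eta_n(u)$, which need not vanish; so your conversion of $|v|^q/n$ into $C|v|^{q-1}$ is unjustified precisely in the regime where $|v|$ is large. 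This is repairable (handle $|v|>2n$ separately using $|u-v|\ge|v|-2n$), but the cleaner route is to write $f_n(u)-f_n(v)=\int_v^u f_n'(s)\,ds$ and note that $|f_n'(s)|\le|f'(s)|+|f(s)||\eta_n'(s)|\le C\left(|s|^{q-1}+1\right)$ uniformly in $n$, because $\eta_n'$ is supported where $n\le|s|\le 2n$ and there $|s|^q/n\le 2|s|^{q-1}$. That observation shows each $f_n$ satisfies Assumption~\ref{ass:fg} with constants independent of $n$, so part (ii) follows at once from Lemma~\ref{lem:f-lipschitz} with no further computation.

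The more serious gap is the H\"older step. The claimed bound $\norm{f_n(u)-f_n(v)}_{6/5}\le C\left(1+\norm{u}_{6(q-1)/5}^{q-1}+\norm{v}_{6(q-1)/5}^{q-1}\right)\norm{u-v}_6$ is not an instance of H\"older's inequality: for an $L^{6/5}$ norm of a product the reciprocals of the exponents must sum to $5/6$, whereas $5/6+1/6=1$; the pairing $(6/5,6)$ is the one used in \eqref{energy-5a} to bound an $L^1$ integral, not an $L^{6/5}$ norm. The valid pairing with $\norm{u-v}_6$ is $\norm{gh}_{6/5}\le\norm{g}_{3/2}\norm{h}_6$, which requires $u,v\in\L[3(q-1)/2]$; since $3(q-1)/2\le 3p/(3-p)$ only when $q\le(p+3)/(3-p)$, and $(p+3)/(3-p)<\tfrac{5p}{2(3-p)}$ for every $p>2$ (e.g.\ $p=5/2$, $q=12$ is admissible but $3(q-1)/2=16.5>15=3p/(3-p)$), this repaired version does not cover the full supercritical range. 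To reach all $q<\tfrac{5p}{2(3-p)}$ one must measure $|u-v|$ in the full Sobolev range $\L[\frac{3p}{3-p}-\delta]$ of $\W[1-\epsilon,p]$ and the polynomial weight in $\L[\frac{6p}{7p-6}+\delta']$, noting $\tfrac{7p-6}{6p}+\tfrac{3-p}{3p}=\tfrac{5}{6}$; the resulting requirement $(q-1)\tfrac{6p}{7p-6}<\tfrac{3p}{3-p}$ is then exactly $q<\tfrac{5p}{2(3-p)}$. That sharper splitting is the content of Lemma~\ref{lem:f-lipschitz}, which your argument should either reproduce correctly or simply invoke.
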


With this truncation, each problem \eqref{sc-n} now possesses a solution $u_n$ in the sense of Definition~\ref{def:weaksln} from Proposition~\ref{prop:ll} in Section~\ref{S3} on an interval $[0,T]$.  It is important to note that this interval is indeed independent of $n$ precisely because the Lipschitz constants of $f_n$ and $h_n\circ\t$ as maps into $\L[6/5]$ and $\Lb[4/3]$, respectively, are independent of $n$ from Lemma~\ref{lem:sc-lip}.   Further, each $u_n$ satisfies the energy inequality 
\begin{align}\label{sc-energy}
\E_n (t) &+ \int_0^t ||u_n '(\tau)||_{1,2}^2\,d\tau \notag \\ & \leq 
 \E_n(0) + \int_0^t \int_\O f_n(u_n)u_n'\,dxd\tau + \int_0^t \int_\G h_n(\t u_n)\t u_n'\,dSd\tau
\end{align}
on $[0,T]$ where 
\begin{align*}
\E_n(t)=\frac{1}{2}||u_n'(t)||_2^2 + \frac{1}{p}||u_n(t)||_{1,p}^p.
\end{align*}

From this point the process of obtaining a solution to \eqref{wave} closely mirrors the process used in Section~\ref{S2}. The strategy is to first demonstrate that an analogue of the a priori estimates obtained in Proposition~\ref{prop:apriori} holds for the sequence $\{u_n\}$.  From this, we may identify a subsequence of these solutions along with a limit function $u$ which should satisfy \eqref{wave}.  As in Section~\ref{S2}, we shall have to pay particular attention to the convergence of the nonlinear terms; most especially those resulting from the $p$-Laplacian.  Many of these arguments are extremely similar to their counterparts earlier in the manuscript.  These parallel arguments have been cross-referenced, and as a result some of the proofs are intentionally terse where they are largely repetitive.

There is one major difference between the following proofs and those in Section~\ref{S2}.  Here, each approximate solution $u_n$ need not be in the form of a finite sum of separable functions and as such one cannot, for instance, obtain energy identities and a priori estimates via multiplication at the Galerkin level as was done in Proposition~\ref{prop:apriori}.  However, this distinction ends up not being of relatively minor significance since the definition of weak solution permits the use of time-dependent test functions.  In particular, $u_n$ is a valid test function which mimics the action of multiplying by $u_{N,j}$ at the Galerkin level.

\begin{prop}[c.f. Proposition~\ref{prop:apriori}]\label{prop:sc-bdds} The sequence $\{u_n\}$ of solutions to the $n$-problem \eqref{sc-n} satisfies 
	\begin{subequations}\begin{alignat}{3}
		\{u_n\}_1^\infty&\text{ is a bounded sequence in }&&L^{\infty} (0,T;\W), \label{scbdd-a}\\
		\{u_n'\}_1^\infty&\text{ is a bounded sequence in }&&L^{\infty} (0,T;\L[2]),\label{scbdd-b}\\
		\{u_n'\}_1^\infty&\text{ is a bounded sequence in }&&L^2(0,T;\H[1]),\label{scbdd-c}\\
		\{u_n''\}_1^\infty&\text{ is a bounded sequence in }&&L^2(0,T;(\W)').\label{scbdd-d}
		\end{alignat}\end{subequations}
\end{prop}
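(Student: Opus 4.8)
The plan is to mirror the argument of Proposition~\ref{prop:apriori} but to work entirely at the level of the weak formulation (Definition~\ref{def:weaksln}) rather than the Galerkin level, since each $u_n$ is no longer a finite separable sum. The key point, noted already in the text preceding the proposition, is that each $u_n$ is itself an admissible test function in its own variational identity. First I would establish the energy estimate: starting from the energy inequality \eqref{sc-energy} satisfied by $u_n$, I would bound the source terms $\int_\O f_n(u_n)u_n'\,dx$ and $\int_\G h_n(\t u_n)\t u_n'\,dS$. Here the crucial input is that $f_n:\W\to\L[2]$ and $h_n\circ\t:\W\to\Lb[2]$ are globally Lipschitz (Lemma~\ref{lem:sc-lip}(i)), so exactly the computation of \eqref{apriori-3.4}--\eqref{apriori-3.7} applies, giving
\begin{align*}
\int_\O f_n(u_n)u_n'\,dx + \int_\G h_n(\t u_n)\t u_n'\,dS \leq C(\|u_n\|_{1,p}^p+1) + \tfrac12\|u_n'\|_{1,2}^2
\end{align*}
a.e. on $[0,T]$. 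However, the Lipschitz constants of $f_n, h_n$ into $\L[2]$ and $\Lb[2]$ depend on $n$, so to keep $C$ uniform in $n$ I would instead route the bound through $\L[6/5]$ and $\Lb[4/3]$ using Lemma~\ref{lem:sc-lip}(ii): since $\|u_n(t)\|_{1,p}$ stays in a ball (this is what the energy bound itself will control, via a bootstrap), the relevant Lipschitz constant $C_K$ is $n$-independent, and Hölder with the embeddings $\W\into\L[6]$ and $\W\tinto\Lb[4]$ closes the estimate. Feeding this into \eqref{sc-energy} and using $\|u_n(\tau)\|_{1,p}^p \leq p\E_n(\tau)$ together with $\E_n(0)\to\E(0)$ bounded, Gronwall's inequality on the resulting integral inequality $\E_n(t)\leq C\int_0^t(\E_n(\tau)+1)\,d\tau + C'$ yields a bound on $\E_n(t)$ uniform in $n$ on all of $[0,T]$. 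This immediately gives \eqref{scbdd-a} and \eqref{scbdd-b}, and then \eqref{scbdd-c} follows since the left side of the energy inequality now has a bounded right side.

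For \eqref{scbdd-d}, I would proceed as at the end of the proof of Proposition~\ref{prop:apriori}, but now testing against an arbitrary $\phi\in\W$ rather than a basis element. From the variational identity satisfied by $u_n$ one reads off, for a.e.\ $t$,
\begin{align*}
|\langle u_n''(t),\phi\rangle_p| \leq |\langle -\Delta_p u_n(t),\phi\rangle_p| + |\langle -\Delta_2 u_n'(t),\phi\rangle_2| + |(f_n(u_n(t)),\phi)_\O| + |(h_n(\t u_n(t)),\t\phi)_\G|.
\end{align*}
Using \eqref{plaplace-opnorm} for the first term, the obvious bound $2\|u_n'(t)\|_{1,2}\|\phi\|_{1,2}$ for the second, and the (now $n$-uniform, after the $\L[2]$/$\Lb[2]$ Lipschitz bounds of Lemma~\ref{lem:sc-lip}(i) — noting that although those constants depend on $n$, the \emph{values} $\|f_n(u_n(t))\|_2$ and $|h_n(\t u_n(t))|_2$ are uniformly controlled because $|f_n|\le C(|u|^q+1)$ independently of $n$ and $u_n$ lies in a fixed $\W$-ball, so the Sobolev embedding $\W\into\L[2q]$ with $q<3p/2(3-p)$ suffices) bounds for the source terms, one gets
\begin{align*}
|\langle u_n''(t),\phi\rangle_p| \leq C\big(\|u_n(t)\|_{1,p}^{p-1} + \|u_n(t)\|_{1,p} + \|u_n'(t)\|_{1,2} + 1\big)\|\phi\|_{1,p} \leq C(1+\|u_n'(t)\|_{1,2})\|\phi\|_{1,p},
\end{align*}
where the last step uses the already-established \eqref{scbdd-a}. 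Taking the supremum over $\|\phi\|_{1,p}\le 1$ gives $\|u_n''(t)\|_{(\W)'} \leq C(1+\|u_n'(t)\|_{1,2})$, which is in $L^2(0,T)$ by \eqref{scbdd-c}, proving \eqref{scbdd-d}.

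\textbf{Main obstacle.} The delicate point is uniformity in $n$: the truncated sources $f_n,h_n$ are globally Lipschitz into $\L[2]$/$\Lb[2]$ (which is what Section~\ref{S2} needs to produce the solutions $u_n$ in the first place), but with constants blowing up as $n\to\infty$. The resolution, supplied by Lemma~\ref{lem:sc-lip}(ii) and the growth bounds $|f_n(u)|\le C(|u|^q+1)$, $|h_n(u)|\le C(|u|^r+1)$ inherited from $f,h$ independently of $n$, is to never use the $n$-dependent constants in the estimates — only the $n$-independent growth/Lipschitz bounds into the lower-integrability spaces $\L[6/5]$, $\Lb[4/3]$ (and, for the pointwise source values in the $u_n''$ bound, the $n$-independent pointwise growth bounds). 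Once this is observed, the entire argument is a transcription of Proposition~\ref{prop:apriori}, with the Galerkin multiplications replaced by using $u_n$ (resp.\ an arbitrary $\phi\in\W$) as a test function, and I would present the proof tersely on that basis, as the paper's preamble to the proposition promises.
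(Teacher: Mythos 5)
Your overall strategy --- testing the weak formulation with $u_n$ (resp.\ an arbitrary $\phi\in\W$) in place of the Galerkin-level multiplications, and keeping constants $n$-independent by routing the source estimates through $\L[6/5]$ and $\Lb[4/3]$ via Lemma~\ref{lem:sc-lip}(ii) --- is exactly the paper's proof, and the energy/Gronwall portion establishing \eqref{scbdd-a}--\eqref{scbdd-c} is correct as written.

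There is, however, a genuine gap in your treatment of \eqref{scbdd-d}. To pass from $|(f_n(u_n(t)),\phi)_\O|$ to $C(||u_n(t)||_{1,p}+1)||\phi||_{1,p}$ you pair in $\L[2]$ and control $||f_n(u_n(t))||_2$ by the pointwise growth $|f_n(u)|\le C(|u|^q+1)$ together with the embedding $\W\into\L[2q]$. That embedding requires $2q\le 3p/(3-p)$, i.e.\ $q\le 3p/(2(3-p))$, whereas Assumption~\ref{ass:fg} permits $q$ up to $5p/(2(3-p))$; in the supercritical range $||u_n||_{2q}$ need not even be finite for $u_n\in\W$, and the only available bound on $||f_n(u_n)||_2$ then comes from the cutoff at $|u|\le 2n$ and blows up with $n$. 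The same problem occurs on the boundary: controlling $|h_n(\t u_n)|_2$ this way needs $\W\tinto\Lb[2r]$, forcing $r\le p/(3-p)$ and excluding most of the admissible range of $r$. The fix is the one you already employ in the energy estimate (and the one the paper uses at this step): estimate $|(f_n(u_n),\phi)_\O|\le||f_n(u_n)||_{6/5}\,||\phi||_6$ and $|(h_n(\t u_n),\t\phi)_\G|\le |h_n(\t u_n)|_{4/3}\,|\t\phi|_4$, then invoke the $n$-independent bounds $||f_n(u_n)||_{6/5}\le C(||u_n||_{1,p}+1)$ and $|h_n(\t u_n)|_{4/3}\le C(||u_n||_{1,p}+1)$ as in \eqref{ll-flip}, together with $\W\into\L[6]$ and $\W\tinto\Lb[4]$. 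With that substitution your argument for \eqref{scbdd-d} goes through verbatim.
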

\begin{proof}
	Notice first that \eqref{scbdd-a} and \eqref{scbdd-b} are, in fact, immediate from Proposition~\ref{prop:ll} which reveals that $\E_n$ is bounded uniformly in $n$ almost everywhere on $[0,T]$.
	
	Using the estimates \eqref{ll-fbound} and \eqref{ll-hbound} established in the previous section we find that

	\begin{align}\label{sc-srcbounds}
	\int_\O f_n(u_n)u_n'\,dx + \int_\G h_n(\t u_n)\t u_n'\,dS \leq C_K(||u_n||_{1,p}^p+1) + \frac{1}{2}||u_n'||_{1,2}^2.	
	\end{align}
	The constant $C_K$ in \eqref{sc-srcbounds} is indeed independent of $n$ in light of Lemma~\ref{lem:sc-lip}, as the value of $K$ is derived only from the initial energy $\E(0)$ which is independent of $n$.
	
	Apply Equation~\eqref{sc-srcbounds} to \eqref{sc-energy} it is seen that each $u_n$ satisfies 
	\begin{align*}
	\E_n(t) + \frac{1}{2} \int_0^t ||u_n'(\tau)||_{1,2}^2\,d\tau \leq \E_n(0) + C\int_0^t(1+\E_n(\tau))\,d\tau; \quad t\in[0,T].
	\end{align*} The desired conclusion of \eqref{scbdd-c} follows from Gronwall's inequality exactly as in Proposition~\ref{prop:apriori}.
	
	For \eqref{scbdd-d}, since each $u_n$ satisfies \eqref{slnid} we have that for each $\phi\in\W$ and $t\in[0,T]$ that 
	\begin{multline*}
	|\langle u_n''(t),\phi\rangle_p| = \left| \frac{d}{dt}(u_n'(t),\phi)_\O \right| \\
	\leq 
	\underbrace{|\langle -\Delta_p u_n(t),\phi\rangle_p| + |\langle -\Delta_2 u_n(t),\phi\rangle_2 |}_\text{(i)} + \underbrace{|(f_n(u_n(t)),\phi)_\O|}_\text{(ii)} + \underbrace{|(h_n(\t u_n(t)),\t \phi)_\G}_\text{(iii)}.
	\end{multline*}
	Using the bounds on $-\Delta_p$ from (\ref{plaplace-opnorm}), 
	\begin{align*}
	\text{(i)}\leq 2(||u_n(t)||_{1,p}^{p-1} + ||u_n'(t)||_{1,2})||\phi||_{1,p}. 
	\end{align*}
	From H\"older's inequality, 
	\begin{align*}
	 \text{(ii)}&\leq ||f_n(u_n(t))||_{6/5}||\phi||_6\\
	 &\leq C(||u_n(t)||_{1,p}+1)||\phi||_{1,p}
	\end{align*}
	from the same arguments as in \eqref{ll-flip}.   Similarly,
	\begin{align*}
	\text{(iii)}\leq C(||u_N(t)||_{1,p}+1)||\phi||_{1,p}.
	\end{align*}
	 Having demonstrated that 
	\begin{align*}
	|\langle u_n ''(t),\phi\rangle_p| \leq C\Big(||u_n(t)||_{1,p}^{p-1}+||u_n(t)||_{1,p}+||u_n'(t)||_{1,2}+1\Big)
	||\phi||_{1,p}
	\end{align*}
	the proof of \eqref{scbdd-d} thus follows immediately from \eqref{scbdd-a} and \eqref{scbdd-c}.
\end{proof}

As was the case in Corollary~\ref{cor:converg}, the standard compactness theorems yield the following:  

\begin{cor}[c.f. Corollary~\ref{cor:converg}]\label{cor:scconverg} For all sufficiently small $\epsilon>0$ there exists a function $u$ and a subsequence of $\{u_n\}$ (still denoted $\{u_n\}$) such that  
	\begin{subequations} \begin{alignat}{2}
		u_n&\to u &\text{ weak* in }&L^\infty(0,T;\W), \label{scconverg:a} \\
		u_n'&\to u' &\text{ weak* in }&L^\infty(0,T;\L[2]),\label{scconverg:b} \\
		u_n'&\to u' &\text{ weakly in }&L^2(0,T;\H[1]),\label{scconverg:c} \\
		u_n &\to u &\text{ strongly in }&C([0,T];\W[1-\epsilon,p]),\label{scconverg:d} \\
		u_n' &\to u' &\text{ strongly in }&L^2(0,T;\W[1-\epsilon,2])\label{scconverg:e}\\
		u_n''&\to u''&\text{ weakly in }&L^2(0,T;(\W)').\label{scconverg:f}
		\end{alignat} \end{subequations} 
\end{cor}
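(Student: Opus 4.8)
The final statement to prove is Corollary~\ref{cor:scconverg}, which asserts the standard weak/strong convergence conclusions for the sequence $\{u_n\}$ of solutions to the truncated $n$-problems \eqref{sc-n}. The proof is essentially a bookkeeping exercise built on the uniform bounds of Proposition~\ref{prop:sc-bdds} together with the Banach--Alaoglu and Aubin--Lions--Simon compactness machinery, exactly as in the passage from Proposition~\ref{prop:apriori} to Corollary~\ref{cor:converg}.

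The plan is as follows. First I would extract \eqref{scconverg:a} and \eqref{scconverg:b} directly: by \eqref{scbdd-a}--\eqref{scbdd-b} the sequences $\{u_n\}$ and $\{u_n'\}$ are bounded in $L^\infty(0,T;\W)$ and $L^\infty(0,T;\L[2])$ respectively, and since these are dual spaces of separable Banach spaces, Banach--Alaoglu furnishes a weak* convergent subsequence with limit $u$ (the two limits are consistent in the sense that the $L^\infty(0,T;\L[2])$-limit of $u_n'$ is the distributional time derivative of the $L^\infty(0,T;\W)$-limit of $u_n$, since $\W\into\L[2]$). Likewise \eqref{scconverg:c} follows from the reflexivity of $L^2(0,T;\H[1])$ and the bound \eqref{scbdd-c}, passing to a further subsequence; a standard argument identifies this weak limit with $u'$. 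For \eqref{scconverg:d} and \eqref{scconverg:e} I would invoke the Aubin--Lions--Simon compactness theorem (e.g., \cite[Thm.~II.5.16]{Boyer2013}) applied to the triples coming from the compact embeddings $\W \overset{c}{\into} \W[1-\epsilon,p]$ and $\W[1,2]\overset{c}{\into}\W[1-\epsilon,2]$ (compact for $\epsilon>0$ small), using the bounds \eqref{scbdd-a}, \eqref{scbdd-c}, \eqref{scbdd-d}: the first triple with $u_n$ bounded in $L^\infty(0,T;\W)$ and $u_n'$ bounded in $L^2(0,T;\H[1])$ gives strong convergence in $C([0,T];\W[1-\epsilon,p])$, and the second with $u_n'$ bounded in $L^2(0,T;\H[1])$ and $u_n''$ bounded in $L^2(0,T;(\W)')$ gives strong convergence in $L^2(0,T;\W[1-\epsilon,2])$. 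Finally \eqref{scconverg:f} is immediate from \eqref{scbdd-d} and the reflexivity of $L^2(0,T;(\W)')$, the limit being $u''$ since weak convergence of $u_n'$ and of $u_n''$ together identify the distributional derivative. Since only countably many subsequence extractions are performed, a single common subsequence works, and following the paper's convention we do not re-index.

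I do not anticipate a genuine obstacle here, as every ingredient has been prepared: the only point requiring a moment of care is checking the hypotheses of the Aubin--Lions--Simon theorem — specifically that the three spaces in each triple satisfy $B_0 \overset{c}{\into} B \into B_1$ with $B_0$, $B_1$ the correct ones and that the time-integrability exponents match ($L^\infty$ and $L^2$, $L^2$ and $L^2$ respectively) — and confirming that the limits obtained by the several compactness arguments can all be taken to be the \emph{same} function $u$ with $u_n'\to u'$, which is routine since strong or weak convergence of $u_n$ in any of the listed spaces forces the limits to agree. Because this corollary is the verbatim analogue of Corollary~\ref{cor:converg}, I would keep the proof short: state the invocation of Banach--Alaoglu and Aubin--Lions--Simon, cite the relevant bounds from Proposition~\ref{prop:sc-bdds}, note the successive passage to subsequences, and remark that the consistency of the limit function $u$ across the various modes of convergence follows from uniqueness of weak limits together with the embeddings already recorded in the Notation subsection.
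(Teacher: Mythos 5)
Your proposal is correct and follows essentially the same route as the paper, which simply cites the standard Banach--Alaoglu and Aubin--Lions--Simon compactness theorems applied to the uniform bounds of Proposition~\ref{prop:sc-bdds}, exactly as in the passage from Proposition~\ref{prop:apriori} to Corollary~\ref{cor:converg}. The only cosmetic remark is that in the Aubin--Lions--Simon triples the third space $B_1$ should be taken large enough to contain the middle space (e.g.\ $\L[2]$ rather than $\H[1]$ for \eqref{scconverg:d}), which your bounds already cover.
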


The following three results follow immediately and are given here without proofs as they remain entirely unchanged from the corresponding results in Section~\ref{S2}.

\begin{cor}[c.f. Corollary~\ref{cor:aeinw1p}]On a subsequence,
	\begin{align*}
	u_n(t)\to u(t)\text{ weakly in }\W\text{ for a.e. }t\in[0,T].
	\end{align*}
\end{cor}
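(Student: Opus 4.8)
The statement to prove is the corollary asserting that, on a subsequence, $u_n(t) \to u(t)$ weakly in $\W$ for a.e. $t \in [0,T]$, given the convergences in Corollary~\ref{cor:scconverg} for the sequence $\{u_n\}$ of solutions to the $n$-problem \eqref{sc-n}.

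\medskip

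\textbf{Proof proposal.} The plan is to invoke exactly the argument already used for Corollary~\ref{cor:aeinw1p}, namely the abstract interpolation-type lemma \cite[Proposition~A.2]{PRT-p-Laplacain}, applied now to the sequence $\{u_n\}$ rather than the Galerkin sequence $\{u_N\}$. The two hypotheses that feed that proposition are in hand: first, from \eqref{scbdd-a} the sequence $\{u_n\}$ is bounded in $L^\infty(0,T;\W)$, and second, from \eqref{scconverg:d} we have $u_n \to u$ strongly in $C([0,T];\W[1-\epsilon,p])$ for some small $\epsilon>0$. Concretely, I would fix the $\epsilon>0$ furnished by Corollary~\ref{cor:scconverg} and argue as follows. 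The boundedness in $L^\infty(0,T;\W)$ gives a pointwise bound $\|u_n(t)\|_{1,p} \le M$ for a.e. $t \in [0,T]$, uniformly in $n$; passing to a further subsequence (not relabeled) via a diagonal/measurable-selection argument, for a.e. fixed $t$ the sequence $\{u_n(t)\}_n$ is bounded in the reflexive space $\W$, hence has a weakly convergent subsequence. The strong convergence $u_n(t) \to u(t)$ in $\W[1-\epsilon,p]$ (which holds for \emph{every} $t$, being uniform on $[0,T]$) forces every weak limit point of $\{u_n(t)\}_n$ in $\W$ to coincide with $u(t)$, since $\W \into \W[1-\epsilon,p]$ continuously and the weak limit in $\W$ must equal the strong limit in the coarser space. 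Therefore the whole sequence $u_n(t) \rightharpoonup u(t)$ in $\W$ for a.e. $t$, which is the claim.

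\medskip

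The only mildly delicate point — and the place where one must appeal to \cite[Proposition~A.2]{PRT-p-Laplacain} rather than hand-wave — is that a priori the "bad" null set of $t$'s and the subsequence along which weak convergence holds could depend on $t$. The cited proposition resolves precisely this: it upgrades a bounded sequence in $L^\infty(0,T;Y)$ that converges strongly in $C([0,T];Z)$ (with $Y$ reflexive and $Y \into Z$) to one that converges weakly in $Y$ at almost every $t$, along a single subsequence. Since the present hypotheses \eqref{scbdd-a} and \eqref{scconverg:d} are the verbatim analogues of \eqref{apriori-a} and \eqref{converg:d} that were used to deduce Corollary~\ref{cor:aeinw1p}, the proof is identical and I would simply write: \emph{The result follows immediately from \cite[Proposition~A.2]{PRT-p-Laplacain} as a consequence of \eqref{scbdd-a} and \eqref{scconverg:d}.} There is genuinely no new obstacle here; the work was done in establishing the a priori bounds and compactness of Corollary~\ref{cor:scconverg}, and this corollary is a formal consequence stated for later use (e.g., in the weak-lower-semicontinuity step for the $p$-Laplacian limit and in the energy inequality, mirroring item~(iv) in the proof of Proposition~\ref{prop:limlaplace}).
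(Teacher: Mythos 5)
Your proposal is correct and matches the paper's own treatment exactly: the paper states this corollary without proof as an unchanged analogue of Corollary~\ref{cor:aeinw1p}, which was itself deduced from \cite[Proposition~A.2]{PRT-p-Laplacain} using the boundedness in $L^\infty(0,T;\W)$ and the strong convergence in $C([0,T];\W[1-\epsilon,p])$, i.e., precisely \eqref{scbdd-a} and \eqref{scconverg:d} here. Your additional sketch of why the cited proposition works (uniqueness of weak limit points via the continuous embedding into the coarser space) is a faithful expansion of the same argument, not a different route.
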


\begin{cor}[c.f. Corollary~\ref{cor:uinCw}]The limit function $u$ identified in Corollary~\ref{cor:scconverg} satisfies $u\in C_w([0,T];\W)$ and $u'\in C_w([0,T];\L[2])$.
	\end{cor}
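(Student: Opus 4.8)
The plan is to argue precisely as in the proof of Corollary~\ref{cor:uinCw}, since all the relevant ingredients are unchanged; in particular the argument rests on the classical lemma of Lions and Magenes (\cite[Lem. 8.1]{LM1}): if a reflexive Banach space $X$ is continuously embedded in a Banach space $Y$ and $v\in L^\infty(0,T;X)\cap C_w([0,T];Y)$, then $v$ agrees almost everywhere with a function in $C_w([0,T];X)$.

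First I would verify the hypotheses for $u$ itself. The uniform bound \eqref{scbdd-a} together with the weak* convergence \eqref{scconverg:a} and lower semicontinuity of the norm gives $u\in L^\infty(0,T;\W)$, and $\W$ is reflexive. The strong convergence \eqref{scconverg:d} places $u$ in $C([0,T];\W[1-\epsilon,p])\subset C_w([0,T];\W[1-\epsilon,p])$, and $\W$ embeds continuously in $\W[1-\epsilon,p]$. Applying the cited lemma with $X=\W$ and $Y=\W[1-\epsilon,p]$ yields, after modification on a null set, $u\in C_w([0,T];\W)$.

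Next I would treat $u'$. From \eqref{scbdd-b} and \eqref{scconverg:b} we have $u'\in L^\infty(0,T;\L[2])$, and $\L[2]$ is reflexive. Moreover \eqref{scbdd-d} (equivalently \eqref{scconverg:f}) gives $u''\in L^2(0,T;(\W)')\subset L^1(0,T;(\W)')$, so $u'$ is an absolutely continuous $(\W)'$-valued antiderivative of $u''$ and hence $u'\in C([0,T];(\W)')\subset C_w([0,T];(\W)')$; dualizing the dense continuous embedding $\W\into\L[2]$ gives a continuous embedding $\L[2]\into(\W)'$. Applying the lemma once more with $X=\L[2]$ and $Y=(\W)'$ gives $u'\in C_w([0,T];\L[2])$, again up to modification on a set of measure zero, and one checks in the usual way that the representatives of $u$ and $u'$ may be chosen consistently.

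I expect no serious obstacle here: the only points requiring attention are that $\W$ and $\L[2]$ are reflexive and that the embeddings into the weaker spaces $\W[1-\epsilon,p]$ and $(\W)'$ are continuous, both of which are standard, so the proof transfers verbatim from Section~\ref{S2}.
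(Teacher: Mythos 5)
Your proposal is correct and follows exactly the route the paper intends: the paper omits the proof because it is unchanged from Corollary~\ref{cor:uinCw}, which itself rests on the Lions--Magenes lemma \cite[Lem.~8.1]{LM1} applied with the pairs $(\W,\W[1-\epsilon,p])$ for $u$ and $(\L[2],(\W)')$ for $u'$, precisely as you do. Your write-up merely supplies the details (reflexivity, continuity of the embeddings, and the absolute continuity of $u'$ into $(\W)'$ via \eqref{scconverg:f}) that the paper leaves implicit.
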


\begin{prop}[c.f. Proposition~\ref{prop:ut}]\label{prop:sc-ut} With $\{u_n\}$ and $u$ as in Corollary~\ref{cor:scconverg},
	$$-\Delta_2 u_n' \to -\Delta_2 u'\text{ weakly in }L^2(0,T;(\W[1,2])').$$
\end{prop}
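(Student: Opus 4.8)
The plan is to mirror the proof of Proposition~\ref{prop:ut} line by line, since the statement is identical with $u_N$ replaced by $u_n$ and the convergence inputs of Corollary~\ref{cor:converg} replaced by those of Corollary~\ref{cor:scconverg}. First I would fix an arbitrary test function $\phi\in L^2(0,T;\W[1,2])$ and split the pairing
\[
\int_0^T \langle -\Delta_2 u_n'(\tau),\phi(\tau)\rangle_2\,d\tau
= \int_0^T \int_\O \grad u_n'(\tau)\cdot\grad\phi(\tau)\,dxd\tau
+ \int_0^T \int_\G \t u_n'(\tau)\,\t\phi(\tau)\,dSd\tau,
\]
exactly as in \eqref{plapalce} with $p=2$. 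For the interior term I would invoke $\grad u_n'\to\grad u'$ weakly in $L^2(0,T;L^2(\O))$, which follows from \eqref{scconverg:c}, paired against $\grad\phi\in L^2(0,T;L^2(\O))$.

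For the boundary term I would use the strong convergence $u_n'\to u'$ in $L^2(0,T;\W[1-\epsilon,2])$ from \eqref{scconverg:e}, together with the continuity of the trace map $\W[1-\epsilon,2]\tinto\Lb[2]$ for sufficiently small $\epsilon>0$, to conclude $\t u_n'\to\t u'$ strongly in $L^2(0,T;\Lb[2])$; pairing this against $\t\phi\in L^2(0,T;\Lb[2])$ gives the convergence of the boundary integral. Adding the two limits identifies the limiting pairing with $\int_0^T\langle -\Delta_2 u'(\tau),\phi(\tau)\rangle_2\,d\tau$, and since $\phi$ was arbitrary in $L^2(0,T;\W[1,2])$ this is precisely weak convergence $-\Delta_2 u_n'\to -\Delta_2 u'$ in $L^2(0,T;(\W[1,2])')$.

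Honestly, there is no real obstacle here: every ingredient is already available from Corollary~\ref{cor:scconverg}, and the operator $-\Delta_2$ is \emph{linear}, so no monotonicity machinery is needed (in contrast to Proposition~\ref{prop:limlaplace}). The only point worth a moment's care is confirming that the trace operator is bounded from $\W[1-\epsilon,2]$ into $\Lb[2]$ for small $\epsilon\ge 0$ — this is a standard trace theorem in dimension three and was already recorded in the Notation subsection. Accordingly I would simply write the proof is identical to that of Proposition~\ref{prop:ut}, replacing $u_N$ by $u_n$ and citing \eqref{scconverg:c} and \eqref{scconverg:e} in place of \eqref{converg:c} and \eqref{converg:e}, and omit the repeated computation.
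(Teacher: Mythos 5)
Your proposal is correct and matches the paper exactly: the paper states that this result is ``entirely unchanged'' from Proposition~\ref{prop:ut} and omits the proof, and your line-by-line transcription --- weak convergence of $\grad u_n'$ from \eqref{scconverg:c} for the interior term, strong convergence of $\t u_n'$ in $L^2(0,T;\Lb[2])$ via \eqref{scconverg:e} and the trace for the boundary term --- is precisely that argument. Nothing further is needed.
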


We have thus produced a sequence $\{u_n\}$ of solutions each satisfying the identity 
\begin{multline}\label{sc-slnid}
	(u_n'(t),\phi(t))_\Omega - (u_1,\phi(0))_\Omega -\int_0^t(u_n'(\tau),\phi_t(\tau))_\Omega\,d\tau \\
	+ \int_0^t \langle -\Delta_p u_n(\tau), \phi(\tau)\rangle_p\,d\tau  +\int_0^t \langle -\Delta u_n'(\tau), \phi(\tau)\rangle_2 \,d\tau\\ 
	=\int_0^t\int_\Omega f_n(u_n(\tau))\phi(\tau)\,dxd\tau 
	+ \int_0^t \int_\Gamma h_n(\t u_n(\tau))\t\phi(\tau)\,dSd\tau 
\end{multline} 
for all test functions $\phi\in C_w([0,T];\W)$ with $\phi_t\in L^2(0,T;\W[1,2])$ in lieu of a fixed sources $f$ and $h$. In order to pass to the limit in \eqref{sc-slnid} we shall first establish that the source terms converge in an appropriate sense.

\begin{prop}[c.f. Proposition~\ref{prop:limf}]\label{prop:sclimf}
\begin{subequations}\begin{alignat}{3}
	f_n(u_n)&\to f(u)&\text{ strongly in }&L^\infty(0,T;\L[6/5]),\label{prop:sclimf-a}\\
	h_n(\t u_n)&\to h(\t u)&\text{ strongly in }&L^\infty(0,T;\Lb[4/3]).\label{prop:sclimf-b}
\end{alignat}\end{subequations}
\end{prop}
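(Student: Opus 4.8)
The plan is to mimic the proof of Proposition~\ref{prop:limf}, but now with the added complication that both the source functions \emph{and} their arguments are varying with $n$. The key is to split the difference by a triangle inequality:
\[
\norm{f_n(u_n(t)) - f(u(t))}_{6/5} \leq \norm{f_n(u_n(t)) - f_n(u(t))}_{6/5} + \norm{f_n(u(t)) - f(u(t))}_{6/5},
\]
and similarly for $h$. First I would note that, by the strong convergence \eqref{scconverg:d} in Corollary~\ref{cor:scconverg}, there is a constant $K>0$ independent of $n$ such that $\norm{u_n(t)}_{1-\epsilon,p}, \norm{u(t)}_{1-\epsilon,p}\leq K$ for all $t\in[0,T]$. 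The first term on the right is then controlled by part (ii) of Lemma~\ref{lem:sc-lip}, which gives the $n$-independent local Lipschitz bound
\[
\norm{f_n(u_n(t)) - f_n(u(t))}_{6/5} \leq C_K \norm{u_n(t)-u(t)}_{1-\epsilon,p},
\]
and this tends to $0$ uniformly in $t$ by \eqref{scconverg:d}.

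For the second term, $\norm{f_n(u(t)) - f(u(t))}_{6/5}$, the argument $u(t)$ is now fixed and only the truncation varies. Since $f_n(s) = f(s)\eta_n(s)$ and $\eta_n(s) = 1$ for $|s|\leq n$, the integrand $|f_n(u(t)) - f(u(t))| = |f(u(t))|\,|1-\eta_n(u(t))|$ is supported on the set $\{x : |u(t,x)| > n\}$ and is dominated there by $|f(u(t))| \leq C(|u(t)|^q + 1)$ using Remark~\ref{rmk:fbound}. The plan is to invoke the dominated convergence theorem: the embedding $\W[1-\epsilon,p]\into\L[6q/5]$ (valid for $\epsilon\geq 0$ small, as used in \eqref{energy-5a}) shows $|f(u(t))|\in\L[6/5]$, the pointwise limit is $0$ a.e. since $|1-\eta_n(u(t,x))|\to 0$ for a.e.\ $x$, and $|f_n(u(t)) - f(u(t))|^{6/5} \leq |f(u(t))|^{6/5}\in L^1(\O)$ provides the dominating function. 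Hence $\norm{f_n(u(t)) - f(u(t))}_{6/5}\to 0$ for each fixed $t$. To upgrade this to convergence in $L^\infty(0,T;\L[6/5])$ one uses the uniform-in-$t$ bound $\norm{u(t)}_{1-\epsilon,p}\leq K$ together with $\W[1-\epsilon,p]\into\L[6q/5]$ to get $\norm{f(u(t))}_{6q/5}^q \leq C_K$ uniformly, and then estimates the tail: $\norm{f_n(u(t))-f(u(t))}_{6/5}^{6/5} \leq \int_{\{|u(t)|>n\}} (C(|u(t)|^q+1))^{6/5}\,dx$, which by H\"older and Chebyshev is bounded by $C_K\,|\{|u(t)|>n\}|^{\theta}$ for some $\theta>0$, and $|\{|u(t)|>n\}| \leq n^{-6q/5}\norm{u(t)}_{6q/5}^{6q/5}\leq C_K n^{-6q/5}$ uniformly in $t$. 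This yields the uniform rate and hence \eqref{prop:sclimf-a}.

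The boundary statement \eqref{prop:sclimf-b} is proved identically, replacing $\O$ by $\G$, the exponent $q$ by $r$, the space $\L[6/5]$ by $\Lb[4/3]$, and using the trace embedding $\W[1-\epsilon,p]\tinto\Lb[4r/3]$ (as in \eqref{energy-5b}) in place of the interior Sobolev embedding, together with the $n$-independent boundary Lipschitz bound from Lemma~\ref{lem:sc-lip}(ii). The main obstacle is keeping all estimates uniform in $t\in[0,T]$ so that the convergence is genuinely in $L^\infty(0,T;\cdot)$ rather than merely pointwise in $t$; this is handled by the uniform bound $K$ on $\norm{u(t)}_{1-\epsilon,p}$ from \eqref{scconverg:d} feeding into the Sobolev/trace embeddings with an extra power to spare, which is precisely where the strict inequalities $q < \frac{5p}{2(3-p)}$ and $r < \frac{3p}{2(3-p)}$ in Assumption~\ref{ass:fg} are used.
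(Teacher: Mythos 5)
Your proposal follows essentially the same route as the paper: the same triangle-inequality splitting, with the first term handled by the $n$-independent local Lipschitz bound of Lemma~\ref{lem:sc-lip}(ii) together with \eqref{scconverg:d}, and the second term $\norm{f_n(u(t))-f(u(t))}_{6/5}$ handled by dominated convergence using the pointwise bound $|f_n(u(t))-f(u(t))|\leq|f(u(t))|$. The one place you go beyond the paper is the Chebyshev tail estimate on $\{|u(t,\cdot)|>n\}$, which upgrades the a.e.-in-$t$ convergence of the truncation-error term to a rate uniform in $t$; the paper's own argument stops at dominated convergence for each fixed $t$, so your extra step is what genuinely justifies the $L^\infty(0,T;\L[6/5])$ (rather than merely pointwise-in-$t$) conclusion, and it is a worthwhile addition rather than a deviation.
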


\begin{proof}
	From the triangle inequality,
	\begin{align*}
	||f_n(u_n(t))-f(u(t))||_{6/5}\leq \underbrace{||f_n(u_n(t))-f_n(u(t))||_{6/5}}_\text{(i)}+\underbrace{||f_n(u(t))-f(u(t))||_{6/5}}_\text{(ii)}.	
	\end{align*}
	Since each $f_n:\W[1-\epsilon,p]\to\L[6/5]$ is locally Lipschitz with a constant independent of $n$ from Lemma~\ref{lem:sc-lip}, we obtain from \eqref{scconverg:d} that 
	\begin{align*}
	\text{(i)} \leq C||u_n(t)-u(t)||_{1-\epsilon,p}\to 0\text{ for a.e. }t\in[0,T]
	\end{align*}
	exactly as was the case in Proposition~\ref{prop:limf}.
	
	For $\text{(ii)}$, it is clear that $f_n(u(t))\to u(t)$ pointwise a.e. on $\O$. Using the pointwise bound 
	$$|f_n(u(t))-f(u(t))|= |\eta_n(u(t))-1||f(u(t))|\leq |f(u(t))|$$
	which is $\L[6/5]$ given that $||f(u(t))||_{6/5}\leq C(||u(t)||_{1-\epsilon,p}+1)$ by Lemma~\ref{lem:f-lipschitz}, we obtain 
	\begin{align*}
	||f_n(u(t))-f(u(t))||_{6/5} \to 0 \text{ a.e. }t\in[0,T]
	\end{align*}
	by the Lebesgue dominated convergence theorem.
	
	For $h_n$ the proof is identical as once again we find 
	\begin{align*}
	|h_n(\t u_n) - h(\t u)|_{4/3} \leq \underbrace{|h_n(\t u_n) - h_n(\t u)|_{4/3}}_\text{(i)}+ \underbrace{|h_n(\t u)-h(\t u)|_{4/3}}_\text{(ii)}	
	\end{align*}
	with $\text{(i)}\to 0$ by Lemma~\ref{lem:sc-lip} along with \eqref{scconverg:d} and $\text{(ii)}\to 0$ by the dominated convergence theorem.
\end{proof}

Since each approximate solution $u_n$ satisfies 
\begin{align}\label{sc-ICs}
(u_n(0),u_n'(0))=(u_0,u_1)\text{ in }\W\times\L[2]
\end{align}
it is clear that an analogue of (\ref{2.4}) is not required here.  We thus turn our attention to the much more difficult task of verifying the convergence of the terms due to the $p$-Laplacian.

\begin{prop}[c.f. Proposition~\ref{prop:limlaplace}]\label{prop:sclimlaplace}On a subsequence, the approximate solutions $\{u_n\}$ to the $n$-problem \eqref{sc-n} along with the limit function $u$ identified in Corollary~\ref{cor:scconverg} satisfy 
	\begin{align}
	-\Delta_p u_n \to -\Delta_p u\text{ weak* in }L^\infty(0,T;(\W)').	
	\end{align}
\end{prop}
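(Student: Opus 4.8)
The plan is to mirror the proof of Proposition~\ref{prop:limlaplace} essentially verbatim, since the Galerkin structure that was used there is replaced here by the fact that $u_n$ itself is an admissible test function. First I would set $X=L^p(0,T;\W)$ and use the operator-norm bound \eqref{plaplace-opnorm} together with the $L^\infty(0,T;\W)$ bound \eqref{scconverg:a} to conclude $\{-\Delta_p u_n\}$ is bounded in $L^\infty(0,T;(\W)')$, so that along a subsequence $-\Delta_p u_n\to\eta$ weak* in $L^\infty(0,T;(\W)')$ for some $\eta$. Viewing $L^\infty(0,T;(\W)')$ inside $X'$, it suffices by monotone operator theory (\cite[Cor. 2.4]{Barbu2010}, noting $-\Delta_p$ is maximal monotone from $X$ to $X'$ by \cite[Lem. 5.1]{MOHNICK1}) to verify the key inequality $\limsup_{n\to\infty}\langle -\Delta_p u_n, u_n\rangle_{X',X}\leq \langle\eta,u\rangle_{X',X}$, after which $\eta=-\Delta_p u$ and the weak* convergence claim follows.

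To obtain the $\limsup$ inequality I would use $u_n$ as a test function in the weak identity \eqref{sc-slnid} (with sources $f_n,h_n$), which plays the role that multiplying \eqref{approx1} by $u_{N,j}$ and summing played in Proposition~\ref{prop:limlaplace}. Rearranging and integrating on $[0,t]$ yields, using the product-rule identity for $\frac{d}{dt}(u_n'(\tau),u_n(\tau))_\O$ and $\frac{d}{dt}\|u_n\|_{1,2}^2$, an expression for $\int_0^t\langle -\Delta_p u_n,u_n\rangle_p\,d\tau$ whose right-hand side consists of the terms (i)--(v) exactly as in \eqref{limlaplace-s1pp} (with $u_N$ replaced by $u_n$, and $f,h$ replaced by $f_n,h_n$), except that here $(u_n(0),u_n'(0))=(u_0,u_1)$ exactly from \eqref{sc-ICs}, so the terms involving initial data are genuinely constant in $n$. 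Taking $\limsup$ as $n\to\infty$: the term $-\frac12\|u_n(t)\|_{1,2}^2$ is handled by weak lower semicontinuity of the norm using Corollary~3.11 (weak convergence in $\W[1,2]$ a.e.\ $t$), and every other term passes to the limit using the strong convergences \eqref{scconverg:d}, \eqref{scconverg:e} together with the source convergence from Proposition~\ref{prop:sclimf} (the products $\int_\O f_n(u_n)u_n$ and $\int_\G h_n(\t u_n)\t u_n$ converge by pairing the strong $L^\infty(0,T;\L[6/5])$, resp.\ $L^\infty(0,T;\Lb[4/3])$, convergence against the strong $C([0,T];\L[6])$, resp.\ $C([0,T];\Lb[4])$, convergence). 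This gives the analogue of \eqref{limlaplace-s2}.

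Next I would recover the same right-hand side in terms of $\eta$. Here is where a small adjustment from Section~\ref{S2} is needed: in Proposition~\ref{prop:limlaplace} the argument exploited that each $u_N$ is a finite linear combination of the $w_j$, which $u_n$ is not. Instead I would argue directly: pass to the limit in the weak identity \eqref{sc-slnid} with a \emph{fixed} test function $\psi\in\W$ (time-independent) to obtain that $u$ satisfies
\begin{align*}
(u'(t),\psi)_\O - (u_1,\psi)_\O + \int_0^t\langle\eta,\psi\rangle_p\,d\tau + \int_0^t\langle -\Delta_2 u'(\tau),\psi\rangle_2\,d\tau = \int_0^t\!\!\int_\O f(u)\psi\,dxd\tau + \int_0^t\!\!\int_\G h(\t u)\t\psi\,dSd\tau,
\end{align*}
using Proposition~\ref{prop:sc-ut} and Proposition~\ref{prop:sclimf}; then test this identity against $\psi=u(t)$ (justified by the regularity $u\in C_w([0,T];\W)$, $u'\in L^2(0,T;\W[1,2])$ and $u''\in L^2(0,T;(\W)')$ via the product rule \eqref{prodrule}) to express $\int_0^t\langle\eta,u\rangle_p\,d\tau$ through exactly the terms appearing in the $\limsup$ bound, invoking $\int_0^t(\grad u',\grad u)_\O + (\t u',\t u)_\G\,d\tau = \frac12\|u(t)\|_{1,2}^2 - \frac12\|u(0)\|_{1,2}^2$. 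Comparing the two expressions yields $\limsup_{n\to\infty}\int_0^t\langle -\Delta_p u_n,u_n\rangle_p\,d\tau \leq \int_0^t\langle\eta,u\rangle_p\,d\tau$ for a.e.\ $t$, hence $\limsup_{n\to\infty}\langle -\Delta_p u_n,u_n\rangle_{X',X}\leq \langle\eta,u\rangle_{X',X}$, completing the proof.

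The main obstacle is precisely this last step: reproducing the identity for $\int_0^t\langle\eta,u\rangle_p\,d\tau$ without the Galerkin separability that Section~\ref{S2} relied on. The cleanest route is to use that the weak identity for $u$ (with $\eta$ in place of $-\Delta_p u$) is already available from taking limits with time-independent $\psi$, and then legitimately use the time-dependent test function $\phi=u$ — which is allowed by Definition~\ref{def:weaksln} and the regularity established in the analogue of Lemma~\ref{lem:utt} — so that the bookkeeping of terms (i)--(v) matches on both sides by construction. Once the two computations of $\int_0^t\langle -\Delta_p u_n,u_n\rangle_p\,d\tau$ and $\int_0^t\langle\eta,u\rangle_p\,d\tau$ are seen to have the same limiting right-hand side, the monotonicity trick closes the argument exactly as before.
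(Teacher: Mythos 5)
Your proposal is correct and tracks the paper's argument through the reduction to the monotonicity inequality \eqref{sc-limlaplacewts} and the computation of $\int_0^t\langle -\Delta_p u_n,u_n\rangle_p\,d\tau$ by taking $\phi=u_n$ in \eqref{sc-slnid}, followed by the term-by-term $\limsup$. Where you genuinely diverge is the identification of $\int_0^t\langle\eta,u\rangle_p\,d\tau$: you first pass to the limit in \eqref{sc-slnid} with time-independent $\psi$ to obtain the limit identity with $\eta$ in place of $-\Delta_p u$, then upgrade that identity to time-dependent test functions via the product rule \eqref{prodrule} and test with $\phi=u$. The paper instead takes $\phi=u$ directly in \eqref{sc-slnid}, the identity satisfied by $u_n$ --- this is admissible because $u\in C_w([0,T];\W)$ and $u'\in L^2(0,T;\W[1,2])$ are already known from Corollary~\ref{cor:scconverg} --- and then lets $n\to\infty$, using the weak* convergence $-\Delta_p u_n\to\eta$ paired against the fixed element $u\in X$ to produce $\int_0^t\langle\eta,u\rangle_p\,d\tau$ on the left-hand side (this is the content of \eqref{limlaplace4}). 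The paper's route is slightly leaner, since it never needs to justify time-dependent test functions in the limit equation and so avoids invoking the analogue of Lemma~\ref{lem:utt} at this stage; your route carries that extra step but is equally valid, because $u''\in L^2(0,T;(\W)')$ is already supplied by \eqref{scconverg:f}. Two small bookkeeping remarks: the a.e.-$t$ weak convergence you use for the lower semicontinuity of $\|u_n(t)\|_{1,2}$ is the Section~\ref{S4} analogue of Corollary~\ref{cor:aeinw1p} (there is no Corollary~3.11), and your observation that the initial-data terms are exactly $(u_1,u_0)_\O$ and $\frac12\|u_0\|_{1,2}^2$ by \eqref{sc-ICs} is correct and matches the paper's \eqref{sclaplace1}.
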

\begin{proof} We shall inherit the framework of the proof of Proposition~\ref{prop:limlaplace} by taking $X=L^p(0,T;\W)$ and the $p$-Laplacian extended to a maximal monotone operator $-\Delta_p:X\to X'$.  As in that proof, the bounds from \eqref{scconverg:a} along with the operator norm bound on $-\Delta_p$ in (\ref{plaplace-opnorm}) again permit the conclusion that $\{-\Delta_p u_n\}$ is a bounded sequence in $L^\infty(0,T;(\W)')$ so that 
	\[
	-\Delta_p u_n \to \eta\text{ weak* in }L^\infty(0,T;(\W)')
	\]
	for some $\eta$.  Likewise, to conclude that $\eta = -\Delta_p u$ it again enough to show instead that 
	\begin{align}\label{sc-limlaplacewts}
	\limsup_{n\to\infty}\langle -\Delta_p u_n, u_n\rangle_{X',X} \leq \langle \eta,u\rangle_{X',X}.
	\end{align}
	
	By taking $\phi=u_n$ in \eqref{sc-slnid} and rearranging we obtain 
	\begin{multline}\label{sclaplace1}
	\int_0^t \langle -\Delta_p u_n(\tau),u_n(\tau)\rangle_p \,d\tau = -(u_n'(t),u_n(t))_\O + (u_1,u_0)_\O \\
	+ \int_0^t ||u_n'(\tau)||_2^2\,d\tau 
	- \int_0^t \langle -\Delta u_n'(\tau),u_n(\tau)\rangle_2\,d\tau \\ 
	+ \int_0^t \int_\O f_n(u_n(\tau))u_n(\tau)\,dxd\tau 
	+ \int_0^t \int_\G h_n(\t u_n(\tau))\t u_n(\tau)\,dSd\tau.
	\end{multline}
	However, from the product rule in the distributional sense we have both
	\begin{align}
	\frac{d}{dt}(\grad u_n,\grad u_n)_\O &=  2(\grad u_n',\grad u_n)_\O,\\
	\frac{d}{dt}(\t u_n,\t u_n)_\G &=2(\t u_n',\t u_n)_\G
	\end{align}
	since $\grad u_n\in W^{1,2}(0,T;L^2(\O))$ from \eqref{scconverg:a} and \eqref{scconverg:c}. Thus, 
	\begin{align*}
	\int_0^t \langle -\Delta u_n'(\tau),u_n(\tau)\rangle_2 \,d\tau & = \int_0^t\int_\O \grad u_n'(\tau)\cdot \grad u_n(\tau)\,dxd\tau + \int_0^t \int_\G \t u_n'(\tau)\t u_n(\tau)\,dSd\tau \notag\\
	&=\frac{1}{2}\int_0^t\int_\O \frac{d}{dt} |\grad u_n(\tau)|^2\,dxd\tau 
	+ \int_0^t \int_\G\frac{d}{dt}|\t u_n'(\tau)|^2\,dSd\tau\notag \\
	& = \frac{1}{2}||u_n(t)||_{1,2}^2 - \frac{1}{2}||u_n(0)||_{1,2}^2
	\end{align*}
	so that we may rewrite \eqref{sclaplace1} as 
	\begin{multline}\label{sclaplace2}
	\int_0^t \langle -\Delta_p u_n(\tau),u_n(\tau)\rangle_p \,d\tau = \underbrace{-(u_n'(t),u_n(t))_\O}_\text{(i)} + (u_1,u_0)_\O\\
	+ \underbrace{\int_0^t ||u_n'(\tau)||_2^2\,d\tau}_\text{(ii)} 
	+\underbrace{\frac{1}{2}||u_n(0)||_{1,2}^2}_\text{(iii)}
	-\underbrace{\frac{1}{2}||u_n(t)||_{1,2}^2}_\text{(iv)}\\
	+ \underbrace{\int_0^t \int_\O f_n(u_n(\tau))u_n(\tau)\,dxd\tau 
	+ \int_0^t \int_\G h_n(\t u_n(\tau))\t u_n(\tau)\,dSd\tau}_\text{(v)}.\ptag{sclaplace1}
	\end{multline}
	As this expression is equivalent to \eqref{limlaplace-s1pp} in the proof of the corresponding Proposition~\ref{prop:limlaplace} and as the sequence $\{u_n\}$ enjoys the same convergence properties as in that proof, we are justified in taking the limit superior in each of the terms $\text{(i)}$ through $\text{(v)}$ of \eqref{sclaplace2} to obtain 
	\begin{multline}\label{sclaplace3}
	\limsup_{n\to\infty} \int_0^t \langle -\Delta_p u_n,u_n\rangle_p\,d\tau \leq (u'(0),u(0))_\O - (u'(t),u(t))_\O + \int_0^t ||u'(\tau)||_2^2\,d\tau\\ 
	+\frac{1}{2}||u(0)||_{1,2}^2 - \frac{1}{2}||u(t)||_{1,2}^2\\
	+ \int_0^t\int_\O f(u)u\,dxd\tau + \int_0^t \int_\G h(\t u)\t u\,dSd\tau\quad\text{a.e. }[0,T].
	\end{multline}
	Again we seek to express the right hand side of \eqref{sclaplace3} by effecting a limit through a different means.  By taking $\phi=u$ in \eqref{sc-slnid} we obtain
	\begin{multline}\label{limlaplace4}
	\int_0^t \langle -\Delta_p u_n(\tau), u(\tau)\rangle_p\,d\tau = 
	-\underbrace{(u_n'(t),u(t))_\Omega}_\text{(i)} + (u_1,u_0)_\Omega\\ +\underbrace{\int_0^t(u_n'(\tau),u'(\tau))_\Omega\,d\tau}_\text{(ii)} 
	-\underbrace{\int_0^t \langle -\Delta u_n'(\tau), u(\tau)\rangle_2 \,d\tau}_\text{(iii)}\\ 
	+\underbrace{\int_0^t\int_\Omega f_n(u_n(\tau))u(\tau)\,dxd\tau 
	+ \int_0^t \int_\Gamma h_n(\t u_n(\tau))\t u (\tau)\,dSd\tau}_\text{(iv)} 
	\end{multline} 
	Taking the limit as $n\to\infty$ is easily justified in each of the preceding terms as it was in Proposition~\ref{prop:limlaplace}.  Given that 
	\begin{align*}
	\langle -\Delta u'(t),u(t)\rangle_2 = \frac{1}{2}\frac{d}{dt}||u(t)||_{1,2}^2 
	\end{align*}
	we find that \eqref{sc-limlaplacewts} indeed holds as was the case in the Proposition~\ref{prop:limlaplace}, completing the proof.	
\end{proof}

It is now clear that we may take the limit in \eqref{sc-slnid} to conclude that the limit function $u$ is a weak solution in the sense of Definition~\ref{def:weaksln}. It thus remains to show only that $u$ verifies the required energy inequalities.

\begin{prop}[c.f. Proposition~\ref{prop:energy}]\label{prop:sc-energy}\sloppy The limit function $u$ identified in Corollary~\ref{cor:scconverg} satisfies the energy inequalities \eqref{energy-2ndid} and \eqref{energy-1stid} in the statement of Theorem~\ref{thm:exist}.
\end{prop}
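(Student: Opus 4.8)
The plan is to reproduce the proof of Proposition~\ref{prop:energy} almost line for line, the only genuinely new point being the bookkeeping forced by the cutoffs $\eta_n$. First I would rewrite the energy inequality \eqref{sc-energy} satisfied by each $u_n$ in terms of the primitives $F_n(s)=\int_0^s f_n$ and $H_n(s)=\int_0^s h_n$ of the truncated sources. Because $\eta_n$ has compact support and $f,h\in C^1(\R)$, each $f_n$ (resp.\ $h_n$) is a bounded, globally Lipschitz function on $\R$, so $F_n,H_n\in C^1(\R)$ have bounded derivatives; combined with the regularity $u_n'\in L^2(0,T;\H[1])$ from \eqref{scbdd-c} — which forces $u_n\in H^1(0,T;\L[2])$ and $\t u_n\in H^1(0,T;\Lb[2])$ — this makes $t\mapsto\int_\O F_n(u_n(t))\,dx$ and $t\mapsto\int_\G H_n(\t u_n(t))\,dS$ absolutely continuous with the expected derivatives $\int_\O f_n(u_n)u_n'\,dx$ and $\int_\G h_n(\t u_n)\t u_n'\,dS$. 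Setting $E_n(t)=\E_n(t)-\int_\O F_n(u_n(t))\,dx-\int_\G H_n(\t u_n(t))\,dS$, inequality \eqref{sc-energy} becomes
\[
E_n(t)+\int_0^t\|u_n'(\tau)\|_{1,2}^2\,d\tau\leq E_n(0),
\]
the exact analogue of \eqref{energy-4}.

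The heart of the argument is to show $\int_\O F_n(u_n(t))\,dx\to\int_\O F(u(t))\,dx$ and $\int_\G H_n(\t u_n(t))\,dS\to\int_\G H(\t u(t))\,dS$ for each $t\in[0,T]$. I would split $F_n(u_n)-F(u)=\bigl[F_n(u_n)-F_n(u)\bigr]+\bigl[F_n(u)-F(u)\bigr]$. For the first bracket the mean value theorem gives $|F_n(u_n)-F_n(u)|\leq|f_n(\xi_n)|\,|u_n-u|$ with $|\xi_n|\leq|u_n|+|u|$, and $|f_n(s)|\leq|f(s)|\leq C(|s|^q+1)$ \emph{uniformly in $n$}; the Hölder-plus-embedding computation of \eqref{energy-5a} then bounds $\int_\O|F_n(u_n)-F_n(u)|\,dx$ by $C(1+\|u_n\|_{1-\epsilon,p}^q+\|u\|_{1-\epsilon,p}^q)\|u_n-u\|_{1-\epsilon,p}$, which tends to $0$ by \eqref{scconverg:d}. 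For the second bracket, $F_n(u(x,t))\to F(u(x,t))$ for a.e.\ $x$ (indeed $F_n=F$ wherever $|u(x,t)|\leq n$), while $|F_n(u)-F(u)|\leq C(|u|^{q+1}+|u|)\in L^1(\O)$ because $\W[1-\epsilon,p]\into\L[q+1]$; dominated convergence then gives $\int_\O|F_n(u)-F(u)|\,dx\to0$. The boundary terms are treated identically using the estimate \eqref{energy-5b} and the trace $\W[1-\epsilon,p]\tinto\Lb[r+1]$. The same reasoning at $t=0$, together with $u_n(0)=u_0$ for every $n$ — so that $\E_n(0)=\E(0)$ is constant and only the $F_n(u_0)$, $H_n(\t u_0)$ pieces vary — yields $E_n(0)\to E(0)$.

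It then remains to pass to the limit. Weak lower semicontinuity of norms gives $\tfrac1p\|u(t)\|_{1,p}^p\leq\liminf_{n}\tfrac1p\|u_n(t)\|_{1,p}^p$ from $u_n(t)\to u(t)$ weakly in $\W$ for a.e.\ $t$ (cf.\ Corollary~\ref{cor:aeinw1p}), while $\tfrac12\|u_n'(t)\|_2^2\to\tfrac12\|u'(t)\|_2^2$ for a.e.\ $t$ along a subsequence from \eqref{scconverg:e}; hence $\E(t)\leq\liminf_n\E_n(t)$ a.e. Together with $\int_0^t\|u'(\tau)\|_{1,2}^2\,d\tau\leq\liminf_n\int_0^t\|u_n'(\tau)\|_{1,2}^2\,d\tau$ from \eqref{scconverg:c} and the convergences of the previous paragraph, the displayed inequality passes to $E(t)+\int_0^t\|u'(\tau)\|_{1,2}^2\,d\tau\leq E(0)$ for a.e.\ $t$, and then for every $t$ by the lower semicontinuity of $t\mapsto\E(t)$ and the weak continuity of the remaining terms granted by $u\in C_w([0,T];\W)$ and $u'\in C_w([0,T];\L[2])$. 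This is \eqref{energy-1stid}. Finally, \eqref{energy-2ndid} is obtained by letting $n\to\infty$ in the chain-rule identities $\int_0^t\int_\O f_n(u_n)u_n'\,dxd\tau=\int_\O(F_n(u_n(t))-F_n(u_0))\,dx$ and its boundary analogue: the right-hand sides converge by the previous paragraph, and the left-hand sides converge to $\int_0^t\int_\O f(u)u'\,dxd\tau$ and $\int_0^t\int_\G h(\t u)\t u'\,dSd\tau$ because $f_n(u_n)\to f(u)$ strongly in $L^\infty(0,T;\L[6/5])$ (Proposition~\ref{prop:sclimf}) paired against $u_n'\to u'$ weakly in $L^2(0,T;\L[6])$ (from \eqref{scconverg:c}), with the analogous trace pairing for $h$ — precisely the limits \eqref{energy-lims} of Proposition~\ref{prop:energy}. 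This furnishes the chain rule for the limit $u$ and converts \eqref{energy-1stid} into \eqref{energy-2ndid}.

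The only real obstacle beyond transcribing Section~\ref{S2} is controlling the truncation error $F_n(u)-F(u)$ (and $H_n(\t u)-H(\t u)$) well enough to invoke dominated convergence; this hinges on $F(u(t))\in L^1(\O)$ and $H(\t u(t))\in L^1(\G)$, i.e.\ on the embeddings $\W[1-\epsilon,p]\into\L[q+1]$ and $\W[1-\epsilon,p]\tinto\Lb[r+1]$, which hold precisely because $2<p<3$ (Remark~\ref{rmk:embeddings}). All remaining estimates are verbatim copies of their Section~\ref{S2} counterparts.
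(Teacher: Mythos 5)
Your proof is correct, but it runs in the opposite direction from the paper's. The paper's proof of this proposition establishes \eqref{energy-2ndid} \emph{first}: it passes to the limit in the source integrals $\int_0^t\int_\O f_n(u_n)u_n'\,dxd\tau$ and $\int_0^t\int_\G h_n(\t u_n)\t u_n'\,dSd\tau$ directly, pairing the strong convergence $f_n(u_n)\to f(u)$ in $L^2(0,T;\L[6/5])$ and $h_n(\t u_n)\to h(\t u)$ in $L^2(0,T;\Lb[4/3])$ from Proposition~\ref{prop:sclimf} against the (weak or strong) convergence of $u_n'$ granted by \eqref{scconverg:c}--\eqref{scconverg:e}, and then uses weak lower semicontinuity; \eqref{energy-1stid} is afterwards deduced from \eqref{energy-2ndid} by the chain rule for $F(u)$ and $H(\t u)$ and the fundamental theorem of calculus. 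You instead transcribe the Section~\ref{S2} argument: rewrite \eqref{sc-energy} in terms of the primitives $F_n$, $H_n$, prove $\int_\O F_n(u_n(t))\,dx\to\int_\O F(u(t))\,dx$ (and the boundary analogue) by splitting off the truncation error and invoking dominated convergence with the majorant $C(|u|^{q+1}+|u|)\in L^1(\O)$, obtain \eqref{energy-1stid} first, and only then recover \eqref{energy-2ndid} by passing to the limit in the $n$-level chain-rule identities. The trade-off is real: your route costs an extra dominated-convergence step to control $F_n(u)-F(u)$ and $H_n(\t u)-H(\t u)$ (which the paper's route avoids entirely, since Proposition~\ref{prop:sclimf} has already absorbed the cutoffs), but in exchange it delivers the chain rule $\frac{d}{dt}\int_\O F(u)\,dx=\int_\O f(u)u'\,dx$ for the low-regularity limit function $u$ as a byproduct of the limit passage, a point the paper's final sentence asserts with less justification. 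Both arguments rest on the same convergences from Corollary~\ref{cor:scconverg} and the same embeddings $\W[1-\epsilon,p]\into\L[q+1]$, $\W[1-\epsilon,p]\tinto\Lb[r+1]$ of Remark~\ref{rmk:embeddings}, and both are valid.
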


\begin{proof}
Since each $u_n$ verifies \eqref{energy-2ndid} we obtain 
\begin{multline*}\E_n(t)+\int_0^t ||u_n'(\tau)||_{1,2}^2\,d\tau \leq \E_n(0) \\
+ \int_0^t \int_\O f_n(u_n(\tau))u_n'(\tau)\,dxd\tau + \int_0^t \int_\G h_n(\t u_n(\tau))\t u_n'(\tau)\,dSd\tau\end{multline*}
with positive energy 
$$\E_n(t)=\frac{1}{2}||u_n(t)||_2^2+\frac{1}{p}||u_n(t)||_{1,p}^p.$$
From Proposition~\ref{prop:sclimf} we have $f_n(u_n)\to f(u)$ strongly in $L^2(0,T;\L[6/5])$ and from \eqref{scconverg:e} along with the embedding $\W[1-\epsilon,2]\into \L[6]$ for sufficiently small $\epsilon>0$ we have $u_n'\to u'$ strongly in $L^2(0,T;\L[6])$. Thus, 
$$\lim_{n\to\infty} \int_0^t \int_\O f_n(u_n(\tau))u_n'(\tau)\,dxd\tau=\int_0^t \int_\O f(u(\tau))u'(\tau)\,dxd\tau.$$
Similarly, Proposition~\ref{prop:sclimf} and \eqref{scconverg:c} along with the trace $\W[1,2]\tinto\Lb[4]$ yield  
$$\lim_{n\to\infty} \int_0^t \int_\G h_n(\t u_n(\tau))\t u_n'(\tau)\,dSd\tau=\int_0^t \int_\G h(\t u(\tau))\t u'(\tau)\,dSd\tau$$
by the usual ``weak-strong'' convergence result since $h_n(\t u_n) \to h(\t u)$ strongly in $L^2(0,T;\Lb[4/3])$ and $\t u_n' \to \t u'$ weakly in $L^2(0,T;\Lb[4])$.   Using weak lower semicontinuity we may thus establish \eqref{energy-2ndid} since 
\begin{align*}
\E(t) + \int_0^t ||u'(\tau)||_{1,2}^2\,d\tau 
&\leq \liminf_{n\to\infty}\left(\int_0^t ||u_n'(\tau)||_{1,2}^2\,d\tau +\E_n(t) \right) \\
&=\E(0)+\int_0^t \int_\O f(u(\tau))u'(\tau)\,dxd\tau \\
&\qquad\qquad + \int_0^t \int_\G h(\t u(\tau))\t u'(\tau)\,dSd\tau.
\end{align*}

In order to obtain the final identity \eqref{energy-1stid} we need only note that the absolutely continuous functions  $F(u)=\int_0^uf(s)\,ds$ and $H(\t u)=\int_0^{\t u}h(s)\,ds$ satisfy
\begin{gather*}
\frac{d}{dt}F(u(\tau))=f(u(\tau))u'(\tau)\quad\text{and}\quad\frac{d}{dt}H(\t u(\tau))=h(\t u(\tau))\t u'(\tau)
\end{gather*}
for a.e. $\tau\in[0,T]$, from which the result follows from \eqref{energy-2ndid} and the fundamental theorem of calculus. 
\end{proof}

This completes the proof of Theorem~\ref{thm:exist}.

\section{Global existence}\label{S5}
 It has been shown in Section~\ref{S2} that global solutions of \eqref{wave} exist in the case where $f$ and $h$ are both globally Lipschitz functions from $\W$ to $\L[2]$ and $\Lb[2]$, respectively.  In general this condition is only assured by taking $q=r=1$ which corresponds essentially to linear source terms.  

As in \cite{GR, PRT-p-Laplacain} it is the case here that either a given solution $u$ must, in fact, be global in time or else one may find a value of $T_0$ with $0<T_0<\infty$ so that 
\begin{align}\label{glob-1}
\limsup_{t\to T_0^-}\left( \E(t) + \int_0^t ||u'(\tau)||_{1,2}^2\,d\tau \right)=\infty
\end{align}
with positive energy $\E(t)=\frac{1}{2}||u'(t)||_2^2 + \frac{1}{p}||u(t)||_{1,p}^p$ from Theorem~\ref{thm:exist}. By demonstrating a bound on the energy 
\begin{align*}
\E(t) + \int_0^t ||u'(\tau)||_{1,2}^2\,d\tau
\end{align*}
on every interval $[0,T]$ which is dependent only upon $T$ and the positive initial  energy  $\E(0)$, we shall show that the situation in \eqref{glob-1} cannot occur as the argument is bounded on any finite interval.  This bound is only possible provided the exponents of the source terms are sufficiently small, specifically when $q,r\leq p/2$.  The following proposition thus establishes the desired result.

\begin{prop}If $u$ is a weak solution of \eqref{wave} given by Theorem~\ref{thm:exist} on $[0,T]$ and $r,q\leq p/2$, then there exists a constant $M$ dependent upon $T$ and $\E(0)$ so that 
	\begin{align*}
	\E(t) + \int_0^t ||u'(\tau)||_{1,2}^2\,d\tau<M,\qquad t\in[0,T].
	\end{align*}
\end{prop}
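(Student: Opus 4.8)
The plan is to close an energy estimate by bounding the source contributions in the energy inequality \eqref{energy-2ndid} by a constant multiple of $\E(\tau)+1$ and then invoking Gronwall's inequality. Starting from \eqref{energy-2ndid}, the two terms to control are $\int_0^t\int_\O f(u(\tau))u'(\tau)\,dx\,d\tau$ and $\int_0^t\int_\G h(\t u(\tau))\t u'(\tau)\,dS\,d\tau$. For the interior term I would use the bound $|f(u)|\leq C(|u|^q+1)$ from Remark~\ref{rmk:fbound}, H\"older's inequality, and then Young's inequality to split off a $\frac14\|u'\|_{1,2}^2$ piece which gets absorbed by the damping term on the left. Concretely, $\int_\O f(u)u'\,dx\leq \|f(u)\|_2\|u'\|_2 \leq C(\|u\|_{2q}^q+1)\|u'\|_2$, and since $2q\leq p\leq 2p/(3-p)$ the Sobolev embedding $\W\into\L[2p/(3-p)]$ gives $\|u\|_{2q}\leq C\|u\|_{1,p}$, hence $\|f(u)\|_2\leq C(\|u\|_{1,p}^q+1)\leq C(\|u\|_{1,p}^{p/2}+1)$. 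Then Young's inequality yields $\int_\O f(u)u'\,dx\leq C(\|u\|_{1,p}^p+1)+\tfrac14\|u'\|_{1,2}^2\leq C(\E(\tau)+1)+\tfrac14\|u'\|_{1,2}^2$, exactly mirroring the computation in \eqref{apriori-3.4}--\eqref{apriori-3.5}.

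For the boundary term I would run the analogous argument using the trace-Sobolev embedding. From $|h(\t u)|\leq C(|\t u|^r+1)$ and H\"older on $\G$, $\int_\G h(\t u)\t u'\,dS\leq |h(\t u)|_2|\t u'|_2\leq C(|\t u|_{2r}^r+1)|\t u'|_2$. Since $2r\leq p$ one has $2r\leq 2p/(3-p)$, so the trace embedding $\W\into\Lb[2p/(3-p)]$ recorded in the Notation subsection gives $|\t u|_{2r}\leq C\|u\|_{1,p}$, hence $|h(\t u)|_2\leq C(\|u\|_{1,p}^{p/2}+1)$. For $|\t u'|_2$ I would use the trace inequality $\W[1,2]\tinto\Lb[2]$ so that $|\t u'|_2\leq C\|u'\|_{1,2}$. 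Young's inequality again produces $\int_\G h(\t u)\t u'\,dS\leq C(\E(\tau)+1)+\tfrac14\|u'\|_{1,2}^2$, matching \eqref{apriori-3.7}.

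Combining these two bounds in \eqref{energy-2ndid} and absorbing the two $\tfrac14\|u'\|_{1,2}^2$ terms into the $\int_0^t\|u'(\tau)\|_{1,2}^2\,d\tau$ on the left yields
\begin{align*}
\E(t)+\tfrac12\int_0^t\|u'(\tau)\|_{1,2}^2\,d\tau \leq \E(0)+C\int_0^t(\E(\tau)+1)\,d\tau,\qquad t\in[0,T],
\end{align*}
where $C$ depends only on the structural constants in Assumption~\ref{ass:fg}. Applying Gronwall's inequality to $\E(t)$ gives $\E(t)\leq(\E(0)+Ct)e^{Ct}$ on $[0,T]$, and feeding this back into the displayed inequality bounds $\int_0^t\|u'(\tau)\|_{1,2}^2\,d\tau$ as well. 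Setting $M:=2(\E(0)+CT)e^{CT}+1$, say, gives the claim.

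The step I expect to be the crux is not any individual estimate — each one is routine — but rather making sure the exponent arithmetic genuinely goes through with the hypothesis $q,r\leq p/2$ together with $2<p<3$: one must verify $2q\leq 2p/(3-p)$ (equivalently $q\le p/(3-p)$, which follows from $q\leq p/2$ since $p/2\leq p/(3-p)$ when $p\ge 1$) and likewise $2r\leq 2p/(3-p)$, and then that the resulting growth $\|u\|_{1,p}^q$ is controlled by $\|u\|_{1,p}^p+1$, which uses $q\leq p$ (again implied by $q\leq p/2$). Once these inclusions are confirmed the rest is a direct transcription of the a priori estimate machinery already developed in Proposition~\ref{prop:apriori}.
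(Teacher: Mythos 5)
Your proposal is correct and follows essentially the same route as the paper's proof: bound $\|f(u)\|_2$ and $|h(\gamma u)|_2$ by $C(\|u\|_{1,p}^p+1)^{1/2}$-type quantities using $2q,2r\leq p$ and the Sobolev/trace embeddings, absorb the $\tfrac14\|u'\|_{1,2}^2$ pieces into the damping, and close with Gronwall. The exponent checks you flag as the crux are exactly the ones the paper relies on, and they go through as you state.
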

\begin{proof}As $u$ satisfies the energy inequality 
	\begin{multline}\label{glob-energyid}
	\E(t) + \int_0^t ||u'(\tau)||_{1,2}^2\,d\tau \leq \E(0) \\
	+ \int_0^t \int_\O f(u(\tau))u'(\tau)\,dxd\tau + 
	\int_0^t \int_\G h(\t u(\tau))\t u'(\tau)\,dSd\tau,
	\end{multline}
	from Theorem~\ref{thm:exist} we may bound the desired quantity using Gronwall's inequality provided the source terms in this expression can be adequately controlled.  From the pointwise bound $|f(u)|\leq C(|u|^q+1)$ in Remark~\ref{rmk:fbound} it follows that 
	\begin{align*}
	||f(u(\tau))||_2^2&\leq C\int_\O(1+|u(\tau)|^q)^2 \,dx
	\leq C(1+||u(\tau)||_{2q}^{2q}).  
	\end{align*}
	Using H\"older's inequality followed by Young's inequality with $\epsilon$ we may thus estimate that 
	\begin{align}\label{glob-e2}
	\int_\O f(u(\tau))u'(\tau)\,dx
	&\leq ||f(u(\tau))||_2||u'(\tau)||_2\notag \\
	&\leq C(1+||u(\tau)||_{2q}^{2q}) + \frac{1}{4}||u'(\tau)||_{1,2}^2	
	\end{align}
	for a suitable choice of $\epsilon$ relative to the constant associated with the embedding $\W[1,2]\into\L[2]$. Under the assumption that $2q\leq p$ it follows that $\W\into\L[2q]$, so that $||u(\tau)||_{2q}^{2q} \leq C||u(\tau)||_{1,p}^{2q}\leq C(||u(\tau)||_{1,p}^p+1).$ Thus, from \eqref{glob-e2} we obtain
	\begin{align}\label{glob-e3}
	\int_\O f(u(\tau))u'(\tau)\,dx \leq C(||u(\tau)||_{1,p}^p+1) + \frac{1}{4}||u'(\tau)||_{1,2}^2.
	\end{align}
	
	The same argument applied to the source term $h$ utilizing the continuity of the trace operators $\W[1,2]\tinto\Lb[2]$ and $\W[1,p]\tinto\Lb[2r]$ yields 
	\begin{align}\label{glob-e4}
	\int_\G h(\t u(\tau))\t u'(\tau)\,d\tau \leq C(||u(\tau)||_{1,p}^p+1) + \frac{1}{4}||u'(\tau)||_{1,2}^2.
	\end{align}
	Integrating \eqref{glob-e3} and \eqref{glob-e4} on $[0,t]$ and recalling that $||u(\tau)||_{1,p}^p \leq p\E(\tau)$ we thus obtain from \eqref{glob-energyid} that 
	\begin{gather}\label{glob-e5}
	\E(t) + \frac{1}{2}\int_0^t ||u'(\tau)||_{1,2}^2\,d\tau \leq \E(0) + C\int_0^t (1+\E(t))\,d\tau.
	\end{gather} 
	From Gronwall's inequality, \eqref{glob-e5} implies that 
	$$\E(t)\leq (\E(0)+Ct)\exp(Ct)$$
	so that $\E(t)\leq N$ on any interval $[0,T]$ by taking $N=(\E(0)+CT)\exp(CT)$.  From \eqref{glob-e5}, we then obtain  
	\begin{gather*}
	\E(t)+\frac{1}{2}\int_0^t||u'(\tau)||_{1,2}^2\,d\tau \leq \E(0)+ CT(1+N)\text{ for }t\in[0,T]
	\end{gather*}
	and the desired result follows by selecting the constant $M=2\E(0)+2CT(1+N)$.
\end{proof}

\section{Blow-up}\label{S6}
The goal of this section is to demonstrate that solutions to \eqref{wave} necessarily exist only on a finite interval of time provided the source feedback terms $f$ and $h$ are of sufficient magnitude. Assume in line with Assumption~\ref{ass:blowupsrc} that  $f$ and $h$ are of the form
	\begin{alignat*}{6}
	f(s)&=(q+1)|s|^{q-1}s&\text{ with }&&p-1&<q&<\frac{5p}{2(3-p)},\\
	h(s)&=(r+1)|s|^{r-1}s&\text{ with }&&p-1&<r&<\frac{3p}{2(3-p)}.
	\end{alignat*}

\begin{rmk}\label{blowup-srcrmk}
	\sloppy In particular, notice that $f(s)=\frac{d}{ds}|s|^{q+1}$. As such, we may explicitly compute that 
	\begin{gather*}
	F(u)=\int_0^u f(s)\,ds = |u|^{q+1}\quad\text{ so that }\quad \int_\O F(u)\,dx = ||u||_{q+1}^{q+1}. 
	\end{gather*}
	The same calculations on $h$ yield 
	\begin{gather*}
	H(\t u)=\int_0^{\t u} h(s)\,ds = |\t u|^{r+1}\quad\text{ and }\quad \int_\G H(\t u)\,dS = |\t u|_{r+1}^{r+1}. 
	\end{gather*}
	As such, the energy inequalities \eqref{energy-2ndid} and \eqref{energy-1stid} are equivalent, and may be expressed as 
	\begin{align}\label{b-energyid}
	E(t)+\int_0^t ||u'(\tau)||_{1,2}^2\,d\tau \leq E(0)
	\end{align}
	with total energy 
	\begin{align}\label{b-ttlenergy}
	E(t)=\frac{1}{2}||u'(t)||_2^2 + \frac{1}{p}||u(t)||_{1,p}^p - ||u(t)||_{q+1}^{q+1} -|\t u(t)|_{r+1}^{r+1}.
	\end{align}
	As it occurs in the proof of Theorem~\ref{thm:blowup}, it is additionally useful to notice that 
	\begin{gather*}
	\int_\O f(u)u\,dx = \int_\O (q+1)|u|^{q-1}u^2 = (q+1)||u||_{q+1}^{q+1}\\
	\text{and, similarly}\\
	\quad\int_\G h(\t u)\t u\,dS = (r+1)|\t u|_{r+1}^{r+1}.
	\end{gather*}
\end{rmk}

We may now prove the main result of this section.
\begin{proof}[Proof of Theorem~\ref{thm:blowup}]
	Given any weak solution $u$ to \eqref{wave} we define the lifespan, $T$, of the solution to be the supremum over all $T'>0$ such that $u$ is a solution to \eqref{wave} on $[0,T']$ in the sense of Definition~\ref{def:weaksln}.  
	By establishing a lower bound on the growth of an appropriate functional we shall show that this value of $T$ must be finite, and additionally that $$\limsup_{t\to T^-}\E(t)=\infty.$$
	
	As in \cite{AR2, BL3, GR1, PRT-p-Laplacain} we introduce the functions 
	\begin{gather*}
	G(t)=\int_0^t ||u'(\tau)||_{1,2}^2\,d\tau -E(0),\qquad N(t)=||u(t)||_2^2,\\
	S(t)=||u(t)||_{q+1}^{q+1} + |\t u(t)|_{r+1}^{r+1}
	\end{gather*}
	for $t\in[0,T)$ with total energy $E(t)=\E(t)-S(t)$ just as in \eqref{b-ttlenergy}. 
	Since $u' \in L^2(0,T;\W[1,2])$ the function $G$ is seen to be absolutely continuous with $$G'(t)=||u'(t)||_{1,2}^2\geq 0\quad\text{ a.e. }[0,T]$$ and $G(0)=-E(0)>0$ by assumption, from which it follows that $G$ is a positive, increasing function on $[0,T]$.  Moreover, with this choice of functions the energy inequality \eqref{b-energyid} may be written succinctly as 
	\begin{align}\label{b-energy2}
	G(t)\leq -E(t) = S(t)-\E(t).
	\end{align}
	
	The differentiability of $N$ is an essential component of the remainder of the proof.  Writing $N(t)=(u(t),u(t))_\O$ and noticing that $u,u'\in L^2(0,T;\L[2])$ we may apply a product rule in the distributional sense (see, for instance, \cite[Prop 1.2]{Sh} with $V=V'=H=\L[2]$) so that 
	\begin{align}\label{blowup-np}
	N'(t)=\frac{d}{dt}(u(t),u(t))_\O = 2(u'(t),u(t))_\O.\end{align}
	By taking $\phi(t)=u(t)$ as a test function in the variational identity \eqref{slnid} we obtain 
	\begin{multline*}
	\overbrace{(u'(t),u(t))_\O}^{\frac{1}{2}N'(t)} =   (u'(0),u(0))_\O + \int_0^t ||u'(\tau)||_2^2\,d\tau \\
	-\int_0^t \langle -\Delta_p u(\tau),u(\tau)\rangle_p\,d\tau 
	-\int_0^t \langle -\Delta_2 u'(\tau),u(\tau)\rangle_2 \,d\tau\\
	+\int_0^t \int_\O f(u(\tau))u(\tau)\,dxd\tau 
	+ \int_0^t \int_\G h(\t u(\tau))\t u(\tau)\,dSd\tau  
	\end{multline*}
	As $N'$ is now seen to be absolutely continuous, we may differentiate again to conclude that 
	\begin{multline}\label{blowup-npp}
	\frac{1}{2}N''(t)=||u'(t)||_2^2 
	- \overbrace{\langle \Delta_p u(t),u(t)\rangle_p}^\text{(i)} 
	- \langle -\Delta_2 u'(t),u(t)\rangle_2 \\
	+ \underbrace{\int_\O f(u(t))u(t)\,dx 
	+ \int_\G h(\t u(t))\t u(t)\,dS}_\text{(ii)}.
	\end{multline}
	
	By definition, we may express (i) in \eqref{blowup-npp} as 
		\begin{align*}
		\text{(i)}=\int_\O |\grad u(t)|^{p-2}\grad u(t) \cdot \grad u(t) \,dx + \int_\G |\t u(t)|^{p-2}\t u(t) \t u(t)\,dS=||u(t)||_{1,p}^p,
		\end{align*}
	and from Remark~\ref{blowup-srcrmk} we may express (ii) in \eqref{blowup-npp} as  
	\begin{align*}
	\text{(ii)} &= (q+1)||u||_{q+1}^{q+1} + (r+1)|\t u|_{r+1}^{r+1}.	\end{align*}
	Thus, we may express \eqref{blowup-npp} equivalently as 
	\begin{multline}\label{blowup-npp2}
	N''(t)= 2||u'(t)||_2^2 - 2||u(t)||_{1,p}^p 
		- 2\langle -\Delta_2 u'(t),u(t)\rangle_2\\
		+2(q+1)||u(t)||_{q+1}^{q+1} + 2(r+1)|\t u(t)|_{r+1}^{r+1}.
					\end{multline}	
					
	As it will be used throughout the remainder of the proof it is useful to pause and notice that for real numbers $0<\eta<1$ and $\delta,z\geq 0$ we have  $$z^\eta \leq z+1 \leq z+1+\delta+\frac{z}{\delta} = \left(1+\frac{1}{\delta}\right)(\delta + z).$$   By taking $\delta=G(0)>0$ and using the fact that $G$ is an increasing function, it then follows that 
	\begin{align}\label{b-fracexp}
	z^\eta \leq C(G(0)+z) \leq C(G(t)+z)	
	\end{align}for the constant $C=(1+1/G(0))$.					
	
	With these preliminaries established, we now define the function 
	$$Y(t)=G(t)^{1-\alpha} + \beta N'(t)$$
	for constants $0<\alpha,\beta<1/2$ to be determined  later.  Our ultimate goal shall be to demonstrate that 
	\begin{align}\label{b-wts}
	Y'(t)\geq CY(t)^{1/(1-\alpha)}%
	\end{align}
	with $Y(0)>0$ from which the desired result will follow given that $1<1/(1-\alpha)<2$.  This is accomplished in two steps beginning first with the right hand side of \eqref{b-wts}.

	{\bf Step 1:} We show here that $Y(t)^{1/1-\alpha} \leq C_1\left[G(t)+ ||u'(t)||_2^2 + ||u(t)||_{1,p}^p \right]$ for a constant $C_1>0$.\\
	Beginning with the definition of $Y$ and $N'$ we find that 
	\begin{align}\label{b-c1-1}
	Y(t)&= \left[ G(t)^{1-\alpha} + 2\beta(u'(t),u(t))_\O\right]^{1/(1-\alpha)}\notag \\
	&\leq C\left[ G(t) + ||u'(t)||_{2}^{\theta/(1-\alpha)} + ||u(t)||_2^{\theta'/(1-\alpha)}  \right]\notag \\
	&= C\left[ G(t) + ||u'(t)||_{2}^2 + ||u(t)||_2^{\theta'/(1-\alpha)}  \right]
	\end{align}
	from H\"older's inequality followed by Young's inequality with conjugate exponents $\theta=2(1-\alpha)>1$ and $\theta'=2(1-\alpha)/(1-2\alpha)$.  Since 
	$$\frac{1}{p}\frac{\theta'}{1-\alpha} = \frac{2}{p(1-2\alpha)}\to \frac{2}{p}<1\text{ as }\alpha\to 0^+$$
	we may select $\alpha$ sufficiently small so that $\theta'/p(1-\alpha)<1$, whereby
	\begin{align}\label{b-c1-2}
	||u(t)||_2^{\theta'/(1-\alpha)} & \leq C||u(t)||_{1,p}^{\theta'/(1-\alpha)}= C(||u(t)||_{1,p}^p )^{\theta'/p(1-\alpha)}\notag\\
	&\leq C(G(t)+||u(t)||_{1,p}^p)
	\end{align}
	from the embedding $\W\into\L[2]$ along with \eqref{b-fracexp}. The desired result then follows immediately by applying the bound in \eqref{b-c1-2} to the corresponding term in \eqref{b-c1-1}.

	{\bf Step 2:} We next prove $Y'(t)\geq  C_2\left[G(t)+ ||u'(t)||_2^2 + ||u(t)||_{1,p}^p \right]$ a.e. $[0,T]$ for a constant $C_2>0$.\\
	Using the expression of $N''$ from \eqref{blowup-npp2},
	\begin{align}\label{b-c2-1}
	Y'(t)&=(1-\alpha)G(t)^{-\alpha}G'(t) + \beta N''(t)\notag \\
	 &=(1-\alpha)G(t)^{-\alpha}G'(t) + 2\beta||u'(t)||_2^2 - 2\beta||u(t)||_{1,p}^p 
	 - 2\beta \langle -\Delta_2 u'(t),u(t)\rangle_2\notag \\
	  &\qquad\qquad+2\beta(q+1)||u(t)||_{q+1}^{q+1} + 2\beta(r+1)|\t u(t)|_{r+1}^{r+1} 
	\end{align}
	Taking $m=\min\{q,r\}$, 
	\begin{align*}
	&2\beta(q+1)||u(t)||_{q+1}^{q+1} + 2\beta(r+1)|\t u(t)|_{r+1}^{r+1}\notag \\
	&\qquad\qquad\geq 2\beta(m+1)S(t)\notag \\
	&\qquad\qquad\geq 2\beta(m+1)G(t) + \underbrace{\beta(m+1)||u'(t)||_2^2 + \frac{2\beta(m+1)}{p}||u(t)||_{1,p}^p}_{2\beta(m+1)\E(t)}
	\end{align*}  
    from the energy inequality \eqref{b-energy2}. Applying this estimate to \eqref{b-c2-1}, 
    \begin{multline}\label{b-c2-2}
    Y'(t) \geq (1-\alpha)G(t)^{-\alpha}G'(t) + \beta(m+3)||u'(t)||_2^2 \\
    + 2\beta\left(\frac{m+1}{p}-1 \right)||u(t)||_{1,p}^p 
    +2\beta(m+1)G(t) 
    - 2\beta\langle  -\Delta_2 u'(t),u(t)\rangle_2.    
    \end{multline} 
    Since the minimum of $q+1$ and $r+1$ is still greater than $p$ by assumption, the coefficient of $||u(t)||_{1,p}^p$ in this expression is indeed positive.  To bound the remaining negative term in \eqref{b-c2-2} we find that from the operator norm bound in (\ref{2.4}) and Young's inequality with $\epsilon G(t)^\alpha$ that  
	\begin{align}\label{b-c2-3}
	2\beta \langle -\Delta_2 u'(t),u(t)\rangle_2 &\leq 4\beta ||u'(t)||_{1,2}||u(t)||_{1,2}\notag \\
	&\leq \frac{4\beta}{2\epsilon G(t)^\alpha}||u'(t)||_{1,2}^2 + \frac{4\beta\epsilon G(t)^\alpha}{2}||u(t)||_{1,2}^2\notag \\
	&=2\beta\epsilon G(t)^{-\alpha}G'(t) + 2\beta\epsilon G(t)^\alpha||u(t)||_{1,2}^2. 
	\end{align}
	The latter of these summands may be further controlled by the energy inequality \eqref{b-energy2}, since 
	\begin{align*}%
		2\beta\epsilon G(t)^\alpha||u(t)||_{1,2}^2
		&\leq 2\beta\epsilon S(t)^\alpha||u(t)||_{1,2}^2\notag \\
		&= 2\beta\epsilon\left( ||u(t)||_{q+1}^{\alpha(q+1)} + |\t u(t)|_{r+1}^{\alpha(r+1)}\right)||u(t)||_{1,2}^2\notag \\
		&\leq2C\beta\epsilon\left( ||u(t)||_{1,p}^{\alpha(q+1)+2} + ||u(t)||_{1,p}^{\alpha(r+1)+2} \right) 
	\end{align*}
		from the embeddings $\W[1,p]\into\L[q+1]$ and the trace $\W[1,p]\tinto\Lb[r+1]$ mentioned in Remark~\ref{rmk:embeddings} along with the embedding $\W[1,p]\into\W[1,2]$.  By choosing $\alpha$ sufficiently small (say, $\alpha<\min\{(p-2)/(q+1),\, (p-2)/(r+1)\}$) %
		we may apply the bound in \eqref{b-fracexp} so that 
		\begin{align}\label{b-c2-5}
		2\beta\epsilon G(t)^\alpha||u(t)||_{1,2}^2 &\leq 2C\beta\epsilon\left[(||u(t)||_{1,p}^p)^\frac{\alpha(q+1)+2}{p}
		+ (||u(t)||_{1,p}^p)^\frac{\alpha(r+1)+2}{p}  \right]\notag \\
		&\leq 2C\beta\epsilon(G(t)+||u(t)||_{1,p}^p).
		\end{align}
		By applying \eqref{b-c2-5} to \eqref{b-c2-3} and in turn using this bound in \eqref{b-c2-2} we obtain 
		\begin{multline*}
		Y'(t)\geq (1-\alpha-2\beta\epsilon)G(t)^{-\alpha}G'(t) + \beta(m+3)||u'(t)||_2^2 \\
		+2\beta\left( \frac{m+1}{p}-1- C\epsilon \right)||u(t)||_{1,p}^p + 2\beta(m+1 -C\epsilon)G(t).
		\end{multline*}
		Since no further adjustment of $\alpha$ is necessary we may select $\epsilon>0$ so that 
		\begin{gather*}
			C\epsilon<\frac{m+1}{p}-1\quad\text{and}\quad C\epsilon<m+1
		\end{gather*}
		and then take $\beta$ sufficiently small so that $1-\alpha-2\beta\epsilon>0$.  This yields the desired result, since 
		\begin{multline*}
		Y'(t)\geq \beta(m+3)||u'(t)||_2^2 
		+2\beta\left( \frac{m+1}{p}-1- C\epsilon \right)||u(t)||_{1,p}^p + 2\beta(m+1 -C\epsilon)G(t)	
		\end{multline*}
		with positive coefficients on every term.

	Finally, since $G(0)>0$ and $$Y(0)=G(0)^{1-\alpha} + \beta N'(0)$$
	we may always, if necessary, select a smaller positive value of $\beta$ so that $Y(0)>0$. Thus, combining the results of Steps 1 and 2 we have the desired result  \eqref{b-wts}.  That is, $Y$ satisfies the ordinary differential inequality 
	\begin{align*}
		\begin{cases} Y'(t)\geq CY(t)^{1/(1-\alpha)}\\
		Y(0)>0
		\end{cases}
	\end{align*}
	a.e. $[0,T)$ for a constant $C>0$, from which it follows from standard ODE theory that the maximal interval of existence of $Y$ is the finite interval $[0,T)$ with 
	$$T<\frac{Y(0)^{(1-\alpha)/\alpha}}{C},$$
	and that 
	\begin{align*}
		\infty=\limsup_{t\to T^-}Y(t)
		=\limsup_{t\to T^-}\left( G(t)^{1-\alpha} + N'(t) \right).
		\end{align*}
	At least one of the following conditions is therefore met:
	\begin{subequations}
		\begin{alignat}{3}
		\infty &= \limsup_{t\to T^-}G(t)&&=\limsup_{t\to T^-}\int_0^t ||u'(\tau)||_{1,2}^2\,d\tau,\label{blow-casea}\\
		\infty&=\limsup_{t\to T^-}N'(t)&&\leq \limsup_{t\to T^-}  ||u(t)||_2||u'(t)||_2.\label{blow-caseb}
		\end{alignat}\end{subequations}
		
	In either case it is clear that the lifespan of $u$ is at most $T$ in order to accord with items \eqref{def-a} and \eqref{def-b} of Definition~\ref{def:weaksln}.  Finally, assume for contradiction that $\limsup_{t\to T^-}\E(t)<\infty$.  The energy inequality \eqref{b-energyid} along with the embedding $\W\into\L[q+1]$ and trace $\W\tinto\Lb[r+1]$ in line with Remark~\ref{rmk:embeddings} then implies 
	\begin{align*}
	\int_0^t ||u'(\tau)||_{1,2}^2\,d\tau &\leq E(0) - E(t)\\
	&=\E(0)- ||u_0||_{q+1}^{q+1} - ||\t u_0||_{r+1}^{r+1}  - \E(t) + ||u(t)||_{q+1}^{q+1} + |\t u(t)|_{r+1}^{r+1}\\
	&\leq C(1+||u(t)||_{q+1}^{q+1} + |\t u(t)|_{r+1}^{r+1})\\
	&\leq C(1+||u(t)||_{1,p}^{q+1} + ||u(t)||_{1,p}^{r+1})\\
	&\leq C(1+\E(t)^{(q+1)/p} + \E(t)^{(r+1)/p} )<\infty\\
	\end{align*} which precludes the condition in \eqref{blow-casea} from occurring.  Simultaneously, from the embedding $\W\into\L[2]$ along with Young's inequality we see that  
	$$||u(t)||_2||u'(t)||_2 \leq C(||u(t)||_{1,p}^p + 1) + ||u'(t)||_2^2 \leq C(\E(t)+1)<\infty$$
	which precludes the condition in \eqref{blow-caseb} from occurring.	As at least one of \eqref{blow-casea} and \eqref{blow-caseb} must hold, it is therefore the case that $\limsup_{t\to T^-}\E(t)=\infty$ which establishes the desired result.
\end{proof}

\bibliographystyle{abbrv}
\bibliography{mohnick}
\end{document}